\documentclass[9 pt,reqno,a4paper,oneside]{report}
\usepackage{setspace}
\usepackage{extsizes}
\usepackage{graphicx}
\usepackage[dvipsnames]{xcolor}
\usepackage{mathrsfs}
\usepackage{amsmath,amssymb,enumerate}
\usepackage{epsfig,fancyhdr,color}
\usepackage{epstopdf}
\usepackage{amssymb}
\usepackage{amsmath,amsthm}
\usepackage{latexsym}
\usepackage{amscd}
\usepackage{psfrag}
\usepackage{url}
\usepackage{stfloats}

\usepackage{epsf}
\usepackage[latin1]{inputenc}
\usepackage[all]{xy}
\usepackage{tikz}
\usepackage{prettyref}
\usepackage{subcaption}
\usepackage{float}
\usepackage{color}
\usepackage{array}
\usepackage{cite}
\usepackage[totalwidth=15cm,totalheight=21.5cm]{geometry}

\newcommand{\II}{I\hspace{-0.1cm}I}
\newcommand{\III}{I\hspace{-0.1cm}I\hspace{-0.1cm}I}


\newcommand{\C}{{\mathbb C}}

\newcommand{\R}{{\mathbb R}}

\makeatletter
\newcommand{\myitem}[1]{%
	\item[#1]\protected@edef\@currentlabel{#1}%
}
\makeatother

\newcommand{\PSL}{\rm{PSL}}

\newcommand{\Isom}{\rm{Isom}}

\newcommand{\f}{\mathsf{F}}
\newcommand{\g}{\mathsf{G}}
\newcommand{\fp}{\mathsf{F_{+}}}

\newcommand{\fm}{\mathsf{F_{-}}}

\newcommand{\fpm}{\mathsf{F_{\pm}}}

\newcommand{\ddt}{\frac{d}{dt}\Bigr|_{\substack{t=0}}}
\newcommand{\ddtt}{\frac{d}{ds}\Bigr|_{\substack{s=0}}}

\newcommand{\red}[1]{\textcolor{black}{#1}}
\newcommand{\deter}{\mathsf{det}}
\newcommand{\T}{\mathcal{T}(S)}
\newcommand{\QF}{\mathcal{QF}(S)}
\newcommand{\MF}{\mathcal{MF}(S)}
\newcommand{\FMF}{\mathcal{FMF}(S)}
\newcommand{\AF}{\mathcal{AF}(S)}
\newcommand{\F}{\mathcal{F}(S)}
\newcommand{\Wfp}{\mathcal{W}^{+}_{\mathsf{F}_{+}}}
\newcommand{\Wfm}{\mathcal{W}^{-}_{\mathsf{F}_{-}}}
\newcommand{\Wf}{\mathcal{W}_{\f}}
\newcommand{\Sig}{S}

\newcommand{\ML}{\mathcal{ML}(S)}

\newcommand{\p}{\mathsf{p}}
\newcommand{\pp}{\mathsf{P}}

\newcommand{\ext}{\mathsf{ext}}
\newcommand{\CP}{\C\mathrm{P}}
\newcommand{\D}{{\mathbb D}}
\newcommand{\Hyp}{\mathbb{H}}
\newcommand{\Eps}{\mathrm{Eps}}
\newcommand{\Q}{{\mathcal{Q}}}

\DeclareMathOperator{\trace}{tr}

\DeclareMathOperator{\hess}{Hess}

\newtheorem{theorem}{\rm\bf Theorem}[section]

\newtheorem{lemma}[theorem]{\rm\bf Lemma}

\newtheorem{definition}[theorem]{\rm\bf Definition}
\newtheorem{remark}[theorem]{\rm\bf Remark}

\def\interieur#1{\mathord{\mathop{\kern 0pt #1}\limits^\circ}}

\usepackage{hyperref}
\hypersetup{hidelinks}

\begin{document}
	\date{}
		\title{Measured foliations at infinity and CMC foliations of quasi-Fuchsian manifolds close to the Fuchsian locus }
	\author{Diptaishik Choudhury }
	\maketitle

	\begin{abstract}
		The main subject of this thesis are a certain class of hyperbolic $3$-manifolds called quasi-Fuchsian manifold. Given an orientied, closed hyperbolic surface $S$, these manifolds are homeomorphic to $S\times \mathbb{R}$. We study two questions regarding them: one is on \textit{measured foliations at infinity} and the other is on \textit{foliation by constant mean curvature surfaces}. \\\\
	Measured foliations at infinity of quasi-Fuchsian manifolds are a natural analog at infinity to the measured bending laminations on the boundary of its convex core. Given a pair of measured foliations $(\f_{+},\f_{-})$ which fill a closed hyperbolic surface $S$ and are {arational}, we prove that for $t>0$ sufficiently small $t\fp$ and $t\fm$ can be uniquely realised as the {measured foliations at infinity} of a quasi-Fuchsian manifold homeomorphic to $S\times \mathbb{R}$, which is sufficiently close to the Fuchsian locus. The proof is based on that of Bonahon in \cite{bonahon05} which shows that a quasi-Fuchsian manifold close to the Fuchsian locus can be uniquely determined by the data of filling measured bending laminations on the boundary of its convex core. Finally, we interpret the result in {half-pipe} geometry.\\\\For the second part of the thesis we deal with a conjecture due to Thurston asks if almost-Fuchsian manifolds admit a foliation by CMC surfaces. Here, almost-Fuchsian manifolds are defined as quasi-Fuchsian manifolds which contain a unique minimal surface with principal curvatures in $(-1,1)$ and it is known that in general, quasi-Fuchsian manifolds are not foliated by surfaces of constant mean curvature (CMC) although their ends are. However, we prove that almost-Fuchsian manifolds which are sufficiently close to being Fuchsian are indeed monotoni- cally foliated by surfaces of constant mean curvature.
	This work is in collaboration with Filippo Mazzoli and Andrea Seppi.
	
	\end{abstract}
	\tableofcontents
	
\chapter*{Acknowledgements}

First of all, I will like to express my gratitude to Jean-Marc Schlenker. For appointing me as a Ph.D. student and giving me the privilege to be a doctoral candidate at the University of Luxembourg, then providing me with a good thesis topic, and then patiently guiding me there onwards in finishing the Ph.D. through a lot of difficult situations. He also 
introduced me to Andrea Seppi to whom also I owe a lot. He has also been very patient, and attentive to every minute detail. Half of my thesis is directly due to working with him. I also thank them as they have given me enough opportunities to present my work at various conferences too.
Also, Greg McShane helping me to get a Ph.D. position after my M2 and then monitoring my work, is needless to say something I am grateful for.\\
There have been a few other people who have played an important role in my education, namely Pierre
Dehornoy, Erwan Lanneau, and Thierry Barbot. They, especially Pierre, have been extremely kind in
inviting me through internships and guiding me in my initial years of studies in Europe. Also thanks to
Erwan, Sarah Sherotzke and Thierry for agreeing to be part of my defence committee and François Fillastre and Thierry
for being referees for my thesis. I also take this chance to thank the professors I have had in Grenoble and
Luxembourg for their lectures.\\
Thanks are due to all the professors, researchers and students who have invited me to give a talk in their respective seminars. The conversations I had with each and every one of them are something I am grateful for.\\
Filippo Mazzoli has not only been an academic brother but I have been very fortunate to have his plethora
of knowledge and expertise to learn from. Without mentioning names, I also want to mention the people
in the departments of Grenoble and Luxembourg with whom I have passed many good moments.\\
Finally, I will like to mention a lot of people with whom I had positive exchanges like all my flat/roommates
at different times and also the landlord/ladies; all the people I know from various university events or people I have met elsewhere and also all the secretaries and staff. The people who have given me the space to
discuss math, complain, dine, nag, etc have all been very generous. I feel very foolish for being annoying
to some of them at times too.\\
Lastly, nothing would have been possible without the support of my parents, so I want to mention Dipankar
and Jayasri for their kindness, care and for teaching me to be hopeful.
	
\chapter{Introduction}	
\hspace*{.07 cm}  	\hspace*{.07 cm}  Let $S$ be a closed, oriented surface with genus $g \geq 2$ and $M$ a $3$-manifold homeomorphic to $S\times \mathbb{R}$. Call the space of isotopy classes of Fuchsian metrics on $M$ as the {Fuchsian locus} $\F$ and note that it can also be identified with the Teichm{\"u}ller space $\T$ (see \S \S  \ref{teichspace}). Let $T^{*}_{[c]}\T$ be its cotangent space at a point $[c]\in \T$ and again identify it with the space of holomorphic quadratic differentials $Q(S,[c])$ on $(S,[c])$ (see \S \S \ref{HQDintro}). Now, consider quasi-Fuchsian hyperbolic metrics on $M$ and let $\QF$ denote the space of isotopy classes of quasi-Fuchsian metrics on $M$ (see \S\S \ref{introtofuchsian}). Denote the connected components of the boundary at infinity of $M$ as $\partial^{+}_{\infty}M$ and $\partial^{-}_{\infty}M$ (both being homeomorphic to $S$) and let $([c_{+}],[c_{-}])\in \mathcal{T}(\partial^{+}_{\infty}M)\times \mathcal{T}({\partial^{-}_{\infty}M})$ be the respective conformal classes (see Theorem \ref{bersthm}).
\section{Measured foliations at infinity} 
There are unique holomorphic maps, well-defined up to right composition by M{\"o}bius transformations, from the universal covers $\widetilde{\partial^{+}_{\infty}M}, \widetilde{\partial^{-}_{\infty}M}\subset \partial_{\infty}\mathbb{H}^{3}\cong \mathbb{C}P^{1}$ to the unit disc $\Delta\subset \mathbb{C}$ that we obtain by uniformising the respective complex structures (see \S \S \ref{introductionary}). Let the Schwarzians at infinity $\sigma_{+}\in Q(\partial^{+}_{\infty}M,[c_{+}])$ and $\sigma_{-}\in Q(\partial^{-}_{\infty}M,[c_{-}])$ be the holomorphic quadratic differentials obtained by taking the Schwarzian derivative of these maps respectively and passing to quotients. We define the measured foliations at infinity $\fp,\fm$ of a quasi-Fuchsian manifold $M$ as the horizontal measured foliations of $\sigma_{+},\sigma_{-}$ on $(\partial^{+}_{\infty}M,[c_{+}]),(\partial^{-}_{\infty}M,[c_{-}])$ respectively. These measured foliations at infinity can be seen as a natural analog at infinity to the bending lamination on the boundary of the convex core of a quasi-Fuchsian manifold (see \S\S \ref{schwatinf}, Lemma \ref{graft}). \\
\hspace*{.5 cm} Let $\MF$ denote the space of equivalence classes of measured foliations on $S$  (see  \S \S\ref{defmf}, \S \S \ref{horifoli},\cite{AST_1979__66-67_}) and $\fp,\fm \in \MF$. Further, given a pair of measured foliations $(\f,\g)\in \MF\times \MF$ we have the notion of them being a pair which {fills} $\Sig$. That is to say, any other measured foliations $\mathsf{H}$ has non-zero intersection with both $\f$ or $\g$ (see Definition \red{\ref{fillupdef}} and \red{\S\S\ref{gardinermasur}}).
So we ask (see Question $7.4$ in\cite{schlenker20}) whether is it possible to determine a quasi-Fuchsian manifolds uniquely by its measured foliations at infinity? \\
\hspace*{.5 cm}Now let $\mathcal{MF}_{0}(S)\subset \MF$ be the subspace of measured foliations which are arational, i.e, all the prongs are of order $3$ and there are no leaves joining the prongs (see Definition \ref{arationaldef} and Lemma \ref{arationaliff}); $\FMF$ the space of all pairs of measured foliations that fill $S$ and $\mathcal{FMF}_{0}(S)$ be the subspace of such pair which are arational. If the pair $(\fp,\fm)$ belongs to $ \mathcal{FMF}_{0}(S)$, then so do the pair $(t\fp,t\fm)$, for all $t>0$ (see \S \S \ref{defmf}). Note also that for a metric $g\in \F$ the Schwarzians and the measured foliations at infinity  are zero (see \S \S \ref{introductionary}). The result of principal interest that answers the above question partially for quasi-Fuchsian manifolds near the Fuchsian locus is:
{\theorem \label{thm1.1}
	For every pair of measured foliations $(\fp,\fm)$ which are arational and fill $S$, there exists an $\epsilon_{\fpm}>0$ such that for $\forall t \in (0,\epsilon_{\fpm})$ there exists an unique quasi-Fuchsian metric $g\in \QF$ on $M$ sufficiently close to the Fuchsian locus, whose measured foliations at infinity are given by  $t\fp$ and $t\fm$ .}\\\\
That is, given the map $\mathfrak{F}: \QF \rightarrow \mathcal{MF}(\partial^{+}_{\infty}M) \times \mathcal{MF}({\partial^{-}_{\infty}M)}$ sending a quasi-Fuchsian metric to the measured foliations at infinity at the positive and negative end respectively; we have a unique solution $g\in \QF$ to the equation $\mathfrak{F}(g)=(t\fp,t\fm)$ when restricted to $(\fp,\fm)\in \mathcal{FMF}_{0}(S)$ for $t>0$ small enough. Now let $q^{\mathsf{H}}_{[c]}$ be the unique holomorphic quadratic differentials realising  $\mathsf{H}\in \MF$ as its horizontal measured foliation on $(S,[c])$ (see \S \S \ref{HMsection}). An immediate consequence along the lines of McMullen's quasi-Fuchsian reciprocity (see\cite{mcmullen,Krasnov2009,filippoksurface}) which helps in describing the Schwarzians at infinity is that if $g$ be a quasi-Fuchsian metric on $M$ such that the measured foliations at infinity are given as $(t\fp,t\fm)$ for some filling arational pair $(\fp,\fm)$, $t>0$ small enough; then the Schwarzians at infinity of $(M,g)$ are $t^{2}q^{\fp}_{[c_{+}]}\in Q(\partial^{+}_{\infty}M,[c_{+}])$ and $t^{2}q^{\fm}_{[c_{-}]}\in Q(\partial^{-}_{\infty}M,[c_{-}])$ respectively.\\\\ We then consider the case of {quasi-Fuchsian half-pipe} manifolds (see Definition \red{\ref{quasifuchsianhalfpipe}}, also\cite{Danciger2013,barbot,Fillastre2019}). These are intermediary geometric structures that arise naturally when we consider smooth transitions between hyperbolic and {anti-de Sitter} structures on $M$ via the Fuchsian locus; the bending laminations on the convex core boundary of the latter being a well studied topic as well. We define an analogous notion for {half-pipe Schwarzians} in this situation (see Definition \red{\ref{definitionofhpsch}}) and prove:
{\theorem\label{thm1.3} Given any pair of filling measured foliations $\fp,\fm$, there exists a unique quasi-Fuchsian half-pipe manifold such that the horizontal measured foliations of its positive and negative half-pipe Schwarzians are given by $\fp$ and $\fm$ respectively.}

\subsection{Analogy between bending laminations and measured foliations at infinity}\label{schwatinf}\hfill \break 
\hspace*{.3 cm} There are a few points of analogies between the data on the boundary at infinity and that on the boundary of the convex core of a quasi-Fuchsian manifold which makes Theorem \ref{thm1.1} really interesting. We denote the {convex core} of $M$, as $\mathcal{CC}(M)$, as the smallest non-empty convex compact subset contained in $M$ and it is homeomorphic to $S\times [-1,1]$. Call $\partial^{+}\mathcal{CC}(M)$ and $\partial^{-}\mathcal{CC}(M)$ (see \S \ref{introtofuchsian}) as the two boundary components and let the induced metric be called $m_{+}$ and $m_{-}$ respectively. There is a conjecture of Thurston regarding parametrization of quasi-Fuchsian metrics on $M$ uniquely by the data $(m_{+},m_{-})$ (see\cite{Canary2011,sullivan,Labourie1992}). The components $\partial^{\pm}\mathcal{CC}(M)$ moreover carry two {measured geodesic laminations} $\lambda_{+}$ and $\lambda_{-}\in\ML$ where, $\mathcal{ML}(\Sig)$ is the space of measured geodesic laminations on $S$ up to  equivalence (see\cite{bonahon05}). These are called the bending laminations and $\partial^{\pm}\mathcal{CC}(M)$ are bent along leaves of $\lambda_{\pm}$ respectively with the bending angle being given by the transverse measures associated to $\lambda_{\pm}$.\\\hspace*{.5cm} The similarity between the variational formulae for the dual volume $V^{*}_{C}(M)$ of $\mathcal{CC}(M)$ (see \cite{Krasnov2009}) and the renormalised volume $V_{R}$ of $M$ (see \cite{Krasnov2008}) makes Theorem \ref{thm1.1} really interesting as well. Suppose for $0\leq t<\epsilon$, we have a differentiable path of quasi-Fuchsian metrics on $M$ given by $t\mapsto M_{t}$, then the formula for the first-order variation of the renormalised volume is given by (\cite{schlenker20}): 
\begin{align}\label{renormazlised volume}
	\ddt V_{R}(M_{t}) = -\frac{1}{2}d(\ext(\fp))(\ddt [c^{t}_{+}])	
\end{align} where $[c^{t}_{+}]$ denotes the variation of the complex structure (up to equivalence) on $\partial^{+}_{\infty}M_{t}$ and for a measured foliation $\f\in \MF$ we have the function $\ext(\f): \T \rightarrow \mathbb{R}$ sending a conformal class $[c]\in \T$ to the extremal length $\ext_{[c]}(\f)$ of the foliation in that conformal class (see \S \S \ref{extremal}). On the other hand, the first order variation of the dual volume, via an application of the Bonahon-Schl{\"a}fli formula is expressed as (\cite{Krasnov2009,Mazzoli2021}):
\begin{align}\label{dualvolume}
	\ddt V^{*}_{C}(M_{t})=-\frac{1}{2}d(l(\lambda_{+}))(\ddt m^{t}_{+})
\end{align} where for a measured geodesic lamination $\lambda\in \ML$ we have the function $l(\lambda): \T \rightarrow \mathbb{R}$ sending a hyperbolic metric $m\in \T$ to the length of $\lambda$, denoted as $l_{m}(\lambda)$, measured with respect to this metric and $m^{t}_{+}$ denotes the variation of the induced metric on the convex core boundary under the variation of the quasi-Fuchsian structure. Here, we note that for a given measured foliation $\f$ and measured lamination $\lambda$ the derivatives $d(\ext(\fp)),d(l(\lambda_{+})):T\T \rightarrow \mathbb{R}$ are considered as elements in the cotangent space $T^{*}\T$. Moreover, we also have the upper bound from\red{\cite{Bridgeman2019}} that  $l_{m_{\pm}}(\lambda_{\pm})\leq 6\pi|\chi(\Sig)|$ whereas, from\red{\cite{schlenker20}} we have similar upper bounds on the extremal length $\ext_{[c_{\pm}]}(\f_{\pm})\leq3\pi|\chi(\Sig)|$, where $\chi(S)$ is the Euler characterisitic of $S$. \\ \hspace*{.5 cm}Further, there is a well-studied conjecture of Thurston which asks if the map $
\mathfrak{B}:\QF \rightarrow \mathcal{ML}(\Sig) \times \mathcal{ML}(\Sig)
$ sending a quasi-Fuchsian metric $g\in \QF$ to the data $\mathfrak{B}(g):=(\lambda_{+},\lambda_{-})$ of measured bending laminations on the boundary of its convex core, is a homeomorphism onto its image? That is to say, whether quasi-Fuchsian metrics on $M$ can be parametrized by the data of measured bending laminations $(\lambda_{+},\lambda_{-})$ on the boundary of its convex core.
Although the problem remains open in full generality (see also \cite{Bonahon2004,Lecuire2005,Series,geuritaud} and \cite{adsbending} for the anti-de Sitter case) it can be seen from rather elementary arguments that the image of the map $\beta(\QF)$ is contained in $\mathcal{FML}(\Sig)$, the space of pairs of {filling} measured geodesic laminations on $\Sig$, i.e,  $\lambda_{+}$ and $\lambda_{-}$ always fill $S$ for any quasi-Fuchsian manifold. Using this property Bonahon proves the following theorem  to which we claim our Theorem \red{\ref{thm1.1}} is an analogue of when restricted to the case of measured foliations which are arational: 
{\theorem \red{\cite{bonahon05}}\label{bonahon}
	There exists an open neighbourhood $V$ of $\mathcal{F}(S)$ in $\mathcal{QF}(S)$, such that $\mathfrak{B}:\QF\rightarrow \ML\times\ML$ is a homeomorphism between $V\setminus \mathcal{F}(S)$ and its image.  Moreover, $V$ can  be chosen so that, $\mathfrak{B}(V \setminus \F)=U$ is an open subset of $\mathcal{FML}(S)$ which intersects
	each ray $(0,\infty)(\lambda_{+},\lambda_{-})$ in an interval $(0,\epsilon_{\lambda_{\pm}})(\lambda_{+},\lambda_{-})$.}\\\\
\hspace*{.5 cm} A consequence of the theorem above is that the image $\mathfrak{B}(U\setminus\F)$ are pairs of filling measured geodesic laminations $(t\lambda_{+},t\lambda_{-})$, for $t>0$ small enough and clearly, this inspires Theorem \ref{thm1.1}. measured foliations at infinity of $M$ can be thus thought of as a new invariant that provide coordinates for $\QF$ near the Fuchsian locus in a fashion similar to that of measured bending lamination on the boundary of the convex core $\mathcal{CC}(M)$ and we summarise the preceding discussion as Table \ref{tb:1}. We conjecture that our current result can be extended to any pair $(t\fp,t\fm)\in \FMF$ for $t$ small enough.
\begin{table}[H]\label{table}
	
	\begin{tabular}{|c|c|}
		\hline
		
		On the convex core & On the boundary at infinity \\
		\hline\hline
		Thurston's conjecture on $(m_{+},m_{-})$ & Bers' Simultaneous Uniformisation Theorem \\ \hline 
		Hyperbolic length $l_{m_{\pm}}(\lambda_{\pm})$ & Extremal length  $\ext_{[c_{\pm}]}(\f_{\pm})$
		\\\hline
		$l_{m_{\pm}}(\lambda_{\pm})\leq 6\pi|\chi(S)|$ &   $\ext_{[c_{\pm}]}(\f_{\pm})\leq 3\pi|\chi(S)|$\\ \hline 
		
		Variational formula (\ref{dualvolume}) for $V^{*}_{C}$ & Variational formula (\ref{renormazlised volume}) for $V_{R}$\\  \hline
		
		Theorem \ref{bonahon} & Theorem \red{\ref{thm1.1}} \\
		\hline
	\end{tabular}
	\caption{}
	\label{tb:1}
\end{table} 

\subsection{Outline}\label{2}
We prove Theorem \ref{thm1.1} by showing the existence of unique paths in $\QF$ starting from the Fuchsian locus whose measured foliations at infinity are given by $(t\fp,t\fm)\in \mathcal{FMF}_{0}(S)$ for $t>0$ is small enough. Following\cite{bonahon05} this is done essentially by applying an inverse function theorem to the function $\mathfrak{F}: \QF \rightarrow \MF \times \MF$ at the Fuchsian locus and to remove the non-degeneracy of $\mathfrak{F}$ at $\F$, we pass to the blow-up space $\widetilde{\QF}$. To methodize, in \red{\S \ref{minsec}} we establish a necessary condition that infinitesimal deformations of quasi-Fuchsian metrics starting from the Fuchsian locus should satisfy if they have any pair of filling measured foliations $(t\fp,t\fm)$ appearing as their foliation at infinity at first order at $\F$ (Proposition \ref{neccesary}). In \red{\S \ref{pathexists}} we then use this condition to construct small paths $g_{t}$ of quasi-Fuchsian metrics starting from the Fuchsian locus which satisfies $\mathfrak{F}(g_{t})=(t\fp,t\fm)\in \mathcal{FMF}_{0}(S)$ for $0<t<\epsilon_{\fpm}$ where $\epsilon_{\fpm}$ depends on $(\fp,\fm)$ (we don't know how the $\epsilon_{\fpm}$ depends on the pair though). For this we study the sections $q^{\f},q^{-\g}:\T \rightarrow T^{*}\T$ for an arational filling pair $(\f,\g)$. An important step in the proof is to identify the intersection of $[q^{\f}]$ and $[q^{-\g}]$ in the quotient unit bundle $UT^{*}\T$ with a Teichm{\"u}ller geodesic given by the critical point of the function $\ext(t \f)+\ext(\g):\T \rightarrow \mathbb{R}$; this is done in  \red{\S \ref{horifoli}}. In \red{\S\ref{halfpipe}} we define the notion of half-pipe Schwarzians (see \red{\S \S \ref{hpschwarzandfoli}}) and use the results in \S \ref{minsec} once more by to prove Theorem \red{\ref{thm1.3}}. Chapter \ref{prelim} contains the necessary preliminaries. \\

\section{CMC surfaces}
 In this Chapter (written with Filippo Mazzoli and Andrea Seppi) we continue the analytic study of quasi-Fuchsian manifolds, and in particular of \emph{foliations} whose leaves are surfaces of \emph{constant mean curvature} (CMC), as in the following definition:

{\definition \label{defi foliation}
	A Riemannian three-manifold  $M$ homeomorphic to $\Sigma\times\R$ is (smoothly) monotonically foliated by CMC surfaces with mean curvature ranging in the interval $(a,b)$ if there exists a diffeomorphism between $\Sigma\times(a,b)$ and $M$ which, for every $H\in (a,b)$, is an embedding of constant mean curvature $H$ when restricted to $\Sigma\times\{H\}$.}

\vspace{0.1cm}

 It is known that there exist quasi-Fuchsian manifolds containing several closed minimal surfaces homotopic to $\Sigma\times\{*\}$, see \cite{zbMATH03876145} and \cite{zbMATH06460565}. In particular, this implies that there exists quasi-Fuchsian manifolds $M$ that do not admit a global monotone CMC foliation. Indeed if $M\cong \Sigma\times\R$ admits a monotone CMC foliation (as in Definition \ref{defi foliation}), then by a simple application of the geometric maximum principle, the closed embedded minimal surface in $M$ homotopic to $\Sigma\times\{*\}$ would be unique. 

Concerning uniqueness of minimal surfaces, the work of Uhlenbeck \cite{Uhlenbeck1984} highlighted the importance of a class of quasi-Fuchsian manifolds, which has been later called \emph{almost-Fuchsian} in \cite{Krasnov2007}, defined by the existence of a closed minimal surface with principal curvatures in $(-1,1)$. This condition actually implies that the minimal surface is unique, and that the equidistant surfaces from the minimal surface provide a global foliation of $M$. However, the leaves of this equidistant foliation do not have constant mean curvature, except in the trivial case where $M$ is {Fuchsian}.

Thurston conjectured that every almost-Fuchsian manifold is foliated by CMC surfaces. However, to the best of our knowledge, Fuchsian manifolds are so far the only known examples of quasi-Fuchsian manifolds that are (monotonically) foliated by CMC surfaces.

Before stating our result, let us turn our attention to some positive results in this direction. By a special case of the results of Mazzeo and Pacard in \cite{MP}, each end of any quasi-Fuchsian manifold (namely, each connected component of the complement of a compact set homeomorphic to $\Sigma\times I$ for $I$ a closed interval) is smoothly monotonically foliated by CMC surfaces, with mean curvature ranging in $(-1,-1+\epsilon)$ and  $(1-\epsilon,1)$. This result has been reproved by Quinn in \cite{quinn}, using an alternative approach which is extremely relevant for the present work. Moreover, the recent work of Guaraco-Lima-Pallete \cite{GLP} showed that every quasi-Fuchsian manifold admits a global foliation in which every leaf has constant sign of the mean curvature, meaning that it is either minimal or the mean curvature is nowhere vanishing on the entire leaf.

We also remark that existence results for CMC surfaces in the hyperbolic three-space with a given boundary curve at infinity, and in quasi-Fuchsian manifolds, have been obtained in \cite{zbMATH06553438,zbMATH06759193,zbMATH06993267}. So the main result of this part of the thesis is:

{\theorem \label{thm:foliation}
	Let $\Sigma$ be a closed oriented surface of genus $\geq 2$. Then there exists a neighbourhood $U$ of the Fuchsian locus in quasi-Fuchsian space $\QF(\Sigma)$ such that every quasi-Fuchsian manifold in $U$ is smoothly monotonically foliated by CMC surfaces, with mean curvature ranging in $(-1,1)$.}\\

\subsection{Method and outline}

The monotone CMC foliation of a quasi-Fuchsian manifold $M\cong\Sigma\times\R$, when it exists, is automatically unique by a standard application of the geometric maximum principle. More precisely, the leaf of the foliation with mean curvature $H$ is the unique closed surface homotopic to $\Sigma\times\{*\}$ in $M$ having mean curvature identically equal to $H$.

Observe that, if a quasi-Fuchsian manifold admits a monotone CMC foliation, then the mean curvature necessarily ranges in $(-1,1)$. Indeed, any leaf of the foliation must necessarily have mean curvature in $(-1,1)$, see \cite[Lemma 2.2]{zbMATH05046844}. Moreover, by the aforementioned result of Mazzeo-Pacard, the mean curvature converges to $-1$ and $1$ as the foliations approaches the ends.

We remark that the methods of our proof, which we outline below, also provide a direct proof of the \emph{existence} of closed embedded CMC surfaces of  
mean curvature $H\in (-1,1)$ in the quasi-Fuchsian manifolds $M$ within the neighbourhood $U$. (See Theorem \ref{thm:existence}.) Our proof is independent of previous result in the literature, and does not rely on geometric measure theory techniques.

The main idea of the proof of Theorem \ref{thm:foliation}  is to combine the foliations of the ends, which have been provided in the works of Mazzeo-Pacard and Quinn for every quasi-Fuchsian manifold, with foliations of the compact part that we obtain by a ``deformation'' from Fuchsian manifolds. .The main steps are 
\begin{itemize}
	\item For the foliations of the ends, we adapt the proof given by Quinn in \cite{quinn}, which relies on the Epstein map construction (\cite{epstein,Dumas2017}), that associates to a conformal metric defined  in (a subset of) the boundary at infinity of $\Hyp^3$ an immersed surface in $\Hyp^3$ by ``envelope of horospheres''. One can then translate the condition of constant mean curvature into a PDE on the conformal factor, to which we apply an implicit function theorem method in an infinite-dimensional setting. The fact that the obtained solutions provide a smooth monotone foliation of the complement of a large compact set in the quasi-Fuchsian manifold $M$ follows from another application of the implicit function theorem. The main difference with respect to Quinn's proof is that we refine his method in order to achieve the existence of monotone foliations by CMC surfaces of mean curvature $(-1,-1+\epsilon)\cup (1-\epsilon,1)$ for \emph{any} quasi-Fuchsian manifold in a neighbourhood $U_M$ of a given  $M\in\QF(\Sigma)$, where the constant $\epsilon$ is \emph{uniform} over $U_M$ (Theorem \ref{thm:foliation_ends}).
	
\item	For the compact part, we again obtain the existence of CMC surfaces, for $H\in (-1,1)$, with an implicit function theorem method in infinite-dimensional spaces, using the Epstein construction. In this case, however, the initial solution to which we apply the implicit function theorem is not ``at infinity''; it is instead the umbilical CMC surface in a Fuchsian manifold. In other words, we ``deform'' CMC surfaces in a Fuchsian manifold $M'$ to nearby quasi-Fuchsian manifolds in a neighbourhood $U_{M'}$. Similarly as above, the main technical difficulty is to have a uniform control of the constants, which must not depend on the quasi-Fuchsian manifold as long as we remain in the neighbourhood $U_{M'}$. See Theorem \ref{thm:existence_compact}.
	
\item 	The proof of Theorem \ref{thm:foliation} is then concluded by showing that these surfaces patch together to  a global smooth monotone foliation (Section \ref{sec:finish}), by means of a combination of a careful analysis of the constructed open sets in $\QF(\Sigma)$ and of several geometric arguments, for instance applications of the geometric maximum principle, relying on the observation that the CMC surfaces obtained as deformations from the Fuchsian locus can be assumed, up to restricting to smaller neighbourhoods, to have principal curvatures in $(-1,1)$.
	
\end{itemize}
\chapter{Preliminaries}\label{prelim}
 \section{Hyperbolic space} \hfill \break 
For describing the upper half space model of the $n$-dimensional hyperbolic space we let\\ $\left\lbrace \mathbb{H}^{n}:= {(x_{1},x_{2},\dots,x_{n})\in \mathbb{R}^{n}}| x_{n}>0\right\rbrace $ to be the upper-half plane in $\mathbb{R}^{n}$ with the metric\\ $\frac{dx_{1}^{2}+dx_{2}^{2}+\dots + dx_{n}^{2}}{x_{n}^{2}}$. For all the thesis we will restrict to the case when $n=2,3$.  \\
The boundary at infinity $\partial_{\infty}\mathbb{H}^{n}$ in this model identified with $\mathbb{R}^{n-1}\cup \infty$ which is nothing but the $(n-1)$-sphere $S^{n-1}$.\\ 
We call $\Isom^{+}(\mathbb{H}^{n})$ as the group of orientation preserving isometries of $\mathbb{H}^{n}$ and in the cases $n=2,3$ it is identified witht the group $PSL_{2}(\mathbb{R})$ and $PSL_{2}(\mathbb{C})$ respectively. The action extends uniquely to $\partial_{\infty}\mathbb{H}^{n}$ via conformal diffeomorphisms of $S^{n-1}$ for $n=2,3$.
Isometries of $\mathbb{H}^{n}$ are self-maps which preserve the metric. \\
We will also focus on the fact that in lower dimensions there is an interesting interplay between hyperbolic geometry and complex analysis in $1$-dimension. In dimension $2$ this can be seen by writing down the upper half plane model as $H=\left\lbrace z\in \mathbb{C}| z= x + \textbf{\textit{i}}y , y>0 \right\rbrace $ with the metric given by $\frac{|dz|^{2}}{y^{2}}$. Here again we see that $\Isom^{+}(\mathbb{H}^{2})$ is identified with the group of biholomorphism of $H$.\\
For dimension $3$ we see that every isometry of $\mathbb{H}^{3}$ is uniquely determined by its conformal action of the sphere at infinity $\partial_{\infty}\mathbb{H}^{3}$, the latter being the complex projective space $\mathbb{C}P^{1}$. This is one way of seeing that $\Isom^{+}(\mathbb{H}^{3})$ is identified with the group of M{\"o}bius transformations of the disc, which is $PSL_{2}(\mathbb{C})$. 

\section{Hyperbolic surfaces and $3$-manifolds} \hfill \break 
We will consider $S$ to be a closed surface of genus $g\geq 2$. We call $S$ to be a hyperbolic surface if we have an atlas $(U_{i},\phi_{i})$ on $S$ where $\phi_{i}: U_{i}\rightarrow \mathbb{H}^{2}$ are charts such that at each intersection $U_{i}\cap U_{j}$, the composition $\phi_{i} \circ \phi^{-}_{j}$ are locally restrictions of elements of $PSL_{2}(\mathbb{R})$. An alternative defition can be to say that $S$ is a closed hyperbolic surface if it carries a complete Riemannian metric of constant sectional curvature $-1$. It follows from Gauss-Bonnet theorem that $S$ can carry such a metric only when $g>1$, thus leading to our assumptions. In such a case, one can also state that $S$ is isometric to the quotient of $\mathbb{H}^{2}$ by $\Gamma$ where $\Gamma$ is a discrete subgroup of $PSL_{2}(\mathbb{R})$.\\

On the other hand, a complex structure $c$ on $\Sig$ consists of an atlas $\left\lbrace U_{\alpha},\phi_{\alpha}\right\rbrace $ on $\Sig$ where $\phi_{\alpha}:U_{\alpha}\rightarrow \mathbb{C}$ are holomorphic maps and the transition functions $\phi_{i}\circ \phi^{-1}_{j}$ are biholomorphic maps on $\phi_{i}(U_{i}\cap U_{j})$. Given a complex structure $c$, we consider its equivalence class under diffeomorphisms of $S$ isotopic to the identity and denote it as $[c]$. \\
Consider now a Riemannian metric $g$ on $S$, we can define:
{\definition A conformal class on a surface $\Sig$ is an equivalence class of Riemannian metrics $[g]$, where \begin{align*}
		[[g]]=\left\lbrace e^{2u}g| u: S \rightarrow \mathbb{R}\right\rbrace 
	\end{align*} and $u$ is smooth.} \\ \\
When $S$ is oriented there is a one-to-one correspondence between equivalence classes of complex structures on $S$ under diffeomorphisms isotopic to the identity and conformal classes on $S$  again up to diffeomorphisms isotopic to the identity. Owing to this we can also view Teichm{\"u}ller space as the space of conformal classes on $S$ (see \cite{bookoftromba}). We will denote both a conformal and complex structure on $S$ as $c$. A Riemannian metric on $S$ in a conformal class has the local expression $g(z)=\rho(z)dzd\bar{z}$ where $\rho(z)\geq 0$ is a smooth function on $\Sig\rightarrow \mathbb{R}_{> 0}$. 
\\\hspace*{.5 cm}We will recall now an important lemma concerning the change of {Gaussian} or intrinsic curvature $K_{g}$, associated to a Riemannian metric $g$ under change of conformal factor in the same conformal class. See\red{\cite{Krasnov2007}} among others for a reference:
{\lemma\label{formula} Let $g$ and $g'$ be two Riemannian metrics on $\Sig$ in the same conformal class and let  $u: \Sig \rightarrow \mathbb{R}$ be a function such that $g'=e^{2u}g$. Let $K_{g}$ and $K_{g'}$ be the Gauss curvatures associated to $g$ and $g'$ respectively. Then $K_{g'}=e^{-2u}(-\Delta_{g} u+ K_{g})$, where $\Delta_{g}u$ is the Laplace-Beltrami operator for the metric $g$.}\\\\
Here we use the convention that $\Delta_{g}$ is negative of the usual analysts Laplacian. If we consider $g$ to be a conformal metric, then the hyperbolic metric $m$ in the conformal class of $g$ is given by $m=e^{2u}g$ where $u$ solves:
{\begin{align*}
		-1=e^{-2u}(-\Delta_{g} u+K_{g})
\end{align*}}
So in dimension $2$, corresponding to every conformal class on $S$, one has a unique hyperbolic metric and also an equivalence class of complex structures. \\ 
The definitions for hyperbolic structures on surfaces extend to that for $3$-manifolds where we will call a $3$-manifold to be hyperbolic if it carries a complete Riemannian metric with sectional curvature being $-1$. Alternatively, we can identify it with the quotient $\mathbb{H}^{3}$ with a discrete subgroup of $PSL_{2}(\mathbb{C})$. We will note that such subgroups of $PSL_{2}(\mathbb{R})$ and $PSL_{2}(\mathbb{C})$ are called Fuchsian and Kleinian respectively. 

\section{Fuchsian, almost-Fuchsian and quasi-Fuchsian $3$-manifolds} \label{introtofuchsian}\hfill \break 
We will consider $3$ manifolds $M$ here which are quotient $\mathbb{H}^{3}/\Gamma$ where $\Gamma$ is a discrete subgroup of $PSL_{2}(\mathbb{C})$. Associated to the action of $\Gamma\hookrightarrow\mathbb{H}^{3}$ one has the limit set $\Lambda_{\Gamma}$ which is the set of accumulation points of orbit of $\Gamma$, and it can so shown that it is a subset of $\partial_{\infty}\mathbb{H}^{3}$. When $\Gamma$ is a discrete subgroup of $PSL_{2}(\mathbb{R})\subset PSL_{2}(\mathbb{C})$, we call $M$ to be a Fuchsian manifold. In this case $\Lambda_{\Gamma}$ is a circle on $\partial_{\infty}\mathbb{H}^{3}$. This can be seen as the boundary of the totally geodesic copy of $\mathbb{H}^{2}\subset \mathbb{H}^{3}$ preserved by the action. \\

We call a $3$ manifold $M$ to be quasi-Fuchsian if the $\Gamma < PSL_{2}(\mathbb{C})$ is such that $\Lambda_{\Gamma}$ is a quasi-circle. To define concretely, one has a quasi-conformal map $\phi: \mathbb{C}P^{1}\rightarrow \mathbb{C}P^{1}$ and a Fuchsian subgroup $\Gamma_{0}$ such that $\Gamma := \phi^{-1}\circ \Gamma_{0} \circ \phi$. To simplify, we can consider $\Gamma$ to be quasi-Fuchsian if $\Lambda_{\Gamma}$ is a Jordan curve on $\partial_{\infty}\mathbb{H}^{3}$. \\ 
One more subspace of quasi-Fuchsian space we want to consider is that of almost-Fuchsian. This is defined as quasi-Fuchsian manifolds containing a closed minimal surface with principal curvatures in $(-1,1)$. 
We will also recall here that a surface is minimal if its mean curvature is $0$. \\
One more charateristic difference between Fuchsian and quasi-Fuchsian manifolds is by the different geometry of their convex hulls. Given the limit set $\Lambda_{\Gamma}$ as the setting above, we can consider its convex hull in $\mathbb{H}^{3}$. In the Fuchsian case we recover the totally geodesic copy of $\mathbb{H}^{2}$ preserved by the action as $\Lambda_{\Gamma}$ is a circle. In the quasi-Fuchsian case we have that the convex hull is a closed, convex region which upon taking quotient becomes the convex core $\mathcal{CC}(M)$. The convex core is thus the smallest-non empty convex submanifold contained in the quasi-Fuchsian manifold and it comes with two boundary components since it is $S\times [-1,1]$. \\
Moreover, we have the boundary at infinity of Fuchsian or almost or quasi-Fuchsian manifolds. The action of $\Gamma$ is free and properly discontinuous in the complement $(\partial_{\infty}\mathbb{H}^{3}\setminus \Lambda_{\Gamma})$, also called the domain of discontinuity, which has two connected components. The boundary at infinity of $M$, denoted as $\partial^{+}_{\infty}M, \partial^{-}_{\infty}M$, are the respective quotients of components of $\partial_{\infty}\mathbb{H}^{3}\setminus \Lambda_{\Gamma}$, each being homeomorphic to $S$.
\section{Teichm{\"u}ller space} \label{teichspace}\hfill \break 
We will now briefly introduce the Teichm{\"u}ller space $\T$ associated to a closed surface of genus $g\geq 2$.
{\definition The Teichm{\"u}ller space $\T$ is the space equivalence classes of complex structures on $S$ under diffeomorphisms isotopic to the identity.}\\\\
Owing to the presence of an unique hyperbolic metric in every conformal class or class of complex structure we can alternately define $\T$ as the space of equivalence classes of hyperbolic metrics on $S$ up to diffeomorphisms isotopic to the identity.\\
Now recall that the boundary at infinity $\partial^{\pm}_{\infty}M$ are two copies of $S$ which carry their respective complex structures. It so happens that when $M$ is Fuchian, the complex structures/hyperbolic metrics are identical and thus if one defines $\F$ as the equivalence class of Fuchsian metrics on $M$ up to diffeomorphism isotopic to the identity, then $\F\cong \T$. \\
Likewise, call $\QF$ to be the equivalence class of quasi-Fuchsian metrics on $M$ up to diffeomorphism isotopic to the identity. By virtue of Bers' simultaneous Uniformization theorem one has that $\QF\cong \mathcal{T}(\partial^{+}_{\infty}M)\times \mathcal{T}(\partial^{-}_{\infty}M)$. \\
Now, given two conformal structures $c,c'$ we can define the notion of quasi-conformal map between them.
{\definition Let $\Omega,\Omega'\subset\mathbb{C}$ be two domains and $\phi: \Omega\rightarrow \Omega'$ be a homeomorphism with continuous partial derivatives with respect to $z$ and $\bar{z}$. We denote $\phi$ to be $k$-quasiconformal if 
	\begin{align*}
		k_{\phi}(z)=\frac{|\frac{\partial \phi}{\partial \bar{z}}(z)|+|\frac{\partial \phi}{\partial z}(z)|}{|\frac{\partial \phi}{\partial \bar{z}}(z)|-|\frac{\partial \phi}{\partial z}(z)|}\leq k
	\end{align*}
	for almost every $z\in \Omega$.}\\\\ 
We will denote the $\frac{\partial \phi}{\partial w}$ as $f_{w}$. The {Beltrami differential} $\mu$ associated to the map $\phi$ is defined as the ratio 
\begin{align*}
	\mu_{\phi} = \frac{\phi_{\bar{z}}}{\phi_{z}}
\end{align*}
which is defined almost everywhere, is measurable and satisfies $||\mu||_{\infty}<1$. Equivalently $k_{\phi}$ has the expression $\frac{1+|\mu|}{1-|\mu|}$, which is bounded above by $k=\frac{1+||\mu||_{\infty}}{1-||\mu||_{\infty}} $ and $k_{\phi}$ is called the eccentricity coefficient of $\phi$. We also note that a Beltrami differential on $(S,[c])$ is a tensor of type $(-1,1)$. We will use this to define the notion of holomorphic quadratic differentials on $S$ and how it relates to the tangent and cotangent space of $\T$.
\section{Holomorphic quadratic differentials}\label{HQDintro} \hfill \break 
A {holomorphic quadratic differential} $q$ on $(S, c)$ is a tensor of type $(2,0)$ which in local coordinates can be written as $f(z)dz^{2}$, where $f$ is a holomorphic function. The space of holomorphic quadratic differential denoted as $Q(S)$ forms a bundle over $\mathcal{T}(S)$, where $\T$ is seen as the space of complex structures on $\Sig$ up to  diffeomorphisms isotopic to the identity. The fiber over an equivalence class $[c] \in \T$, which is denoted as $Q(S, [c])$ is a vector space which can be shown to have real dimension $6g-6$ (by, for example the {Riemann-Roch formula}). Moreover, a holomorphic quadratic differential $q$ has zeroes on $\Sig$ the degree of which is defined in terms of the degree of the zero of the local Taylor expansion of $q$. To be precise, if $q$ has a zero of order $k$ at a point $p\in S$ then this means that for all chart centred at $p$ on $S$, $q$ has the local expression $f(z)z^{k}dz^{2}$ for some holomorphic function $f(z)$ such that $f(0)\neq 0$.. Moreover, it follows from, for example the Riemann-Roch theorem, that the sum of the degrees of all zeroes of $q$ on $\Sig$ is $4g-4$.  The space $Q(S)$ further carries a natural stratification depending on the order of the zeroes of $q$. Please consult, for example \cite{Hubbard}, \cite{fredbook} for references on this topic. 
{\definition Let $\overline{k}$ be a $n$-tuple of integers $(k_{1},k_{2},\dots,k_{n})$ such that $\sum_{i}^{n}k_{i}=4g-4$. The stratum $Q^{\overline{k}}(S)$ is the set of holomorphic quadratic differentials $q$ such that the degrees of the zeroes of $q$ are given by the $k_{i}$. We say that $q$ is generic, if $k_{i}=1$ for all $i$.} \\\\
Holomorphic quadratic differentials with only $4g-4$ simple zeroes are termed as generic quadratic differentials and it is known that they form a dense open subset of $Q(S)$ which will be denoted as $Q_{0}(S)$. (see \red{\cite{Douady1975}}).\\\\
Notice that the product of a Beltrami differential and a holomorphic quadratic differential gives us a $(1,1)$ tensor. Let $B(S,c)$ denote the vector space of measurable Beltrami differentials on $(S,c)$ where an element is expressed locally as $\mu = b(z)\frac{d\bar z}{d z}$ . From here we have a natural complex pairing between $\mu \in B(S,c)$ and $q \in Q(S,c)$ as: 
\begin{align*}
	\left\langle q,\mu \right\rangle = \int_{(S,c)} q\mu dz d\bar{z} 
\end{align*}
It follows as a consequence, see \cite{Hubbard}, that: 
{\proposition \label{keylemma}There is an isomorphism of vector spaces between 
	\begin{align*}
		T_{[c]}\T \cong B(S,c)/Q(S,c)^{\perp} {\quad and \quad} T^{*}_{[c]}\T \cong Q(S,c)
	\end{align*}
	where $Q(S,c)^{\perp}= \left\lbrace \mu \in B(S,c)|\left\langle \mu,q\right\rangle  =0, \forall q \in Q(S,c)\right\rbrace$.}\\\\
This allows us to define the Weil-Petersson metric on $T^{*}_{[c]}\T$ as: 
\begin{align*}
	\left\langle q_{1},q_{2}\right\rangle_{WP} = \int_{S} \frac{f_{1}(z)\overline{f_{2}(z)}}{\rho(z)}dz \overline{dz}
\end{align*}
where the hyperbolic metric in the class $[c]$ has the expression $\rho(z)|dz|^{2}$ and $q_{i}=f_{i}(z)dz^{2}$ for $i=1,2$ are two holomorphic quadratic differentials in $Q(S,c)$. This also induces an inner product on the tangent space $T_{[c]}\T$ by duality. The Weil-Petersson metric gives $\T$ with the structure of negatively curved Riemannian manifold. On the other hand, the $L^{1}$-norm on the cotangent space $T^{*}_{[c]}\T$ is defined as:
\begin{align*}
	||q||_{1} = \int_{(S,c)}|f(z)| dz \wedge \overline{dz}.
\end{align*}
This induces a Teichm{\"u}ller norm on $T\T$ via the duality between Beltrami differentials and holomorphic quadratic differentials. One way to express the associated metric, called the Teichm{\"u}ller metric $d_{\T}$, is:
\begin{align}\label{teichmullergeo}
	d_{\T}([c],[c']):=\frac{1}{2}\inf\left\lbrace\log k_{\phi}| \text{ }\phi:(\Sig,[c])\rightarrow (\Sig,[c'])\text{ quasiconformal isotopic to the identity} \right\rbrace 
\end{align} One can consult, for example \cite{carlos}, for further details in this topic.

		\section{Measured foliations on $S$ and the space $\MF$}\label{defmf}\hfill\break 
	Following\red{\cite{hubbardmasur}} we define:
	{\definition\label{deffoliations} A smooth measured foliation $\f$ on $\Sig$ with singularities $\left\lbrace p_{1},\dots,p_{n}\right\rbrace $ of
		order $\left\lbrace k_{1},\dots,k_{n}\right\rbrace $ (respectively) is given by an open covering ${U_{i}}$ of $\Sig\setminus\left\lbrace p_{1},\dots,p_{n}\right\rbrace $ and
		open sets $\left\lbrace V_{1},\dots,V_{n}\right\rbrace $ around $\left\lbrace p_{1},\dots,p_{n}\right\rbrace $ (respectively) along with smooth non-vanishing real valued $1$-forms $d\phi_{i}$
		defined on $U_{i}$ such that:
		\begin{itemize}
			\item $d\phi_{i}=\pm d\phi_{j}$ on $U_{i}\cap U_{j}$
			\item around each $p_{l}$ there is an open neighbourhood $V_{l}$ and a chart $(x_{1},x_{2}):V_{l}\rightarrow \mathbb{R}^{2}$ such that $d\phi_{i} = \mathfrak{I}(z^{\frac{k_{l}}{2}+1} dz)$ on $U_{i}\cap V_{j}$ where $z=x_{1}+\textbf{i}x_{2}$.
	\end{itemize}}

	Immersed lines on $\Sig$ along which $d\phi_{i}$ vanish give a
	foliation $\f$ on $\Sig \setminus (p_{1},\dots,p_{n})$ and we have a $(k_{j}+2)$ pronged singularity at $p_{j}$. Given an arc $\gamma$ on $\Sig$ which avoids the zeroes $(p_{1},p_{2},\dots,p_{n})$, a measured foliation $\f$ associates a transverse measure to $\gamma$ defined as $\mu_{\f}(\gamma)= \int_{\gamma}|d\phi|$, where $|d\phi|$ restricted on each $U_{i}$ is given by $|d\phi_{i}|$. \\\hspace*{.5 cm}
	This measure is invariant under isotopies that maintain the same end points of $\gamma$ and the transversality of the intersection of $\gamma$ with the given foliation. That is, if $\gamma'$ is isotopic to $\gamma$ with the same end points and maintaining the transversality at every time, then $\mu_{\f}(\gamma)=\mu_{\f}(\gamma')$. So, given a class $[\gamma]$, we define:
	{\definition {The intersection number} of $\f$ with a isotopy class of closed curves $[\gamma]$ avoiding the singularities of $\f$ is defined $i([\gamma],\f)=\inf_{\gamma \in [\gamma]}i(\gamma,\f)=\inf_{\gamma \in [\gamma]}\mu_{\f}(\gamma)$, where the infimum is taken over all $\gamma \in [\gamma]$. }\\\\
	As we can see that the intersection number defines a function from the set of closed curves up to isotopy on $\Sig$ to $\mathbb{R}_{>0}$ and we define following, for example\red{\cite{AST_1979__66-67_}}:
	{\definition 
		Two measured foliations $\f$ and $\g$ on $\Sig$ are said to be equivalent if they define the same intersection number. The space of equivalence classes of measured foliations on $\Sig$ will be denoted as $\MF$. 
	}\\
	
	The space $\MF$ can also be defined through a topological equivalence of two foliations which comes via {Whitehead moves} but we do not elaborate on that. Also it follows from Proposition {$2.1$} in\red{\cite{hubbardmasur}} that a measured foliation $\f$ can have $4g-4$ pronged-singularities counted up to multiplicity. \\\hspace*{.5 cm}
	 There is also an action of $\mathbb{R}_{>0}$ on $\MF$ defined as $t.\f\mapsto t\f$ where the latter denotes the measured foliation obtained by multiplying $t$ by the $1$-forms $d\phi_{i}$ which give us the measured foliation $\f$ as par Definition \ref{deffoliations}. 
	The space $(\MF\setminus 0)/\mathbb{R}_{>0}$ is called the space of projectivised measured foliations, denoted as $P\MF$ and it is identified with the {Thurston boundary} of $\T$. We will denote by $[\f]$ as the equivalence class of $\f$ in $P\MF$. Interested readers can consult, for example\red{\cite{AST_1979__66-67_}}, for further details in this topic.  \\
	We also introduce the notion of arational measured foliations here as: 
	{\definition \label{arationaldef} A measured foliation is said to be arational if all its singularities have $3$ prongs and if there are no leaves of the foliation joining the singularities.}\\ 
	
	We call leaves of the foliations joining prongs of the singularities as saddle connections.

\section{Minimal and CMC surfaces in hyperbolic $3$ manifolds}\label{minimalsurface}\hfill\break
Let $i: \Sig \rightarrow M$ be an immersion of $\Sig$ into $M$. Associated to this, we have the data of the {first fundamental form } denoted as $I$ on $\Sig$ which is the induced metric on $\Sig$ inherited from the ambient space and the {second fundamental form} denoted a $\II$ which is a symmetric bilinear form on the tangent bundle $T\Sig$. Associated to the couple $(I,\II)$ we have a unique self-adjoint operator $B$, called the {shape operator}, which satisfies the relation \begin{align*}
	\II(x,y)=I(Bx,y)=I(x,By)
\end{align*} where $x,y \in T_{p}S, \forall p \in S$. We can also define another quantity associated to the immersion called the third fundamental form $\III$ defined as $\III(x,y)=I(Bx,By)=I(B^{2}x,y)$ for $x,y,B$ as before. See\cite{Perdigao} for reference.\\\hspace*{.5 cm}
The eigenvalues of $B$ gives us the principal curvatures associated to the immersion. Since minimal immersions are those immersions for which the mean curvature of $\Sig$ is zero we can define as well that: 
{\definition An immersion is minimal if and only if $B$ is traceless. }
\\

On the other hand, assume we are given a smooth Riemannian metric $g$ on $\Sig$ and a symmetric bilinear form $h$ on $T\Sig$. The couple $(g,h)$ will be associated to the data of an immersion  of $\Sig$ if it satisfies the following:

\begin{enumerate}
	
	\item The Codazzi equation, $d^{\nabla}h=0$, where $\nabla$ is the Levi-Civita connection of $g$.
	\item The Gauss equation, $K_{I}=-1+\deter_{g}(h)$, where $K_{g}$ denotes the Gaussian or intrinsic curvature of the metric $g$.
\end{enumerate}
Now we define the notion of an {almost}-Fuchsian manifold as: 

{\definition A quasi-Fuchsian hyperbolic $3$-manifold $M \cong \Sig \times \mathbb{R}$ is called {almost-Fuchsian} if it contains a closed minimal surface homeomorphic to $\Sig$ with principal curvatures in $(-1,1)$.}\\\\
Given an immersed surface in $M$, an outcome of the Codazzi equation is that, the traceless part of the second fundamental form $(\II)_{0}$ is equal to the real part of a holomorphic quadratic differential $q$. 
Now, following Corollary $2.9$ of\red{\cite{Krasnov2007}} we can show that if $M$ is an almost-fuchsian manifold then the closed minimal surface it contains is unique. This allows us to parametrise almost-Fuchsian hyperbolic manifolds by an open subset $\Omega$ in $T^{*}\T$, following \cite{Krasnov2007}, Theorem $2.12$:
{\theorem\label{methodandoutline} There exists an open subset $\Omega \subset T^{*}\T$ such that we have the following bijection:
	\begin{enumerate}
		\item Given a point $([c],q)\in \Omega$, there exists a unique almost-Fuchsian metric on $M$ such that the unique minimal surface has first fundamental form conformal to $[c]$ and the second fundamental form $\II$ is $\mathfrak{R}(q)$.
		\item Given a almost-Fuchsian metric $g \in \AF$ on $M$, the induced metric and  second fundamental form of its unique minimal surface are specified by a point in $\Omega$. 
\end{enumerate}} 
We will now recall another definition which is of a CMC $H$-surface.
\begin{definition}
	An immersed surface $S$ in $M$ is called a CMC $H$-surface if $\trace(B)=H$ for all points on $S$.
\end{definition}
We will recall now a tool that we use in all the chapters of this thesis:
\section{Schwarzians at infinity of quasi-Fuchsian manifolds}\label{introductionary}\hfill \break 
Recall that the boundary at infinity $\partial_{\infty}\mathbb{H}^{3}$ is identified with the complex projective space $\mathbb{C}P^{1}$. The components $\partial^{+}_{\infty}M$ and $\partial^{-}_{\infty}M$ are the quotient of domains in $\mathbb{C}P^{1}$ under the action of $\Gamma < PSL_{2}(\mathbb{C})$ and so they carry canonical $\mathbb{C}P^{1}$-structure which is a $(G,X)$ structure on $S$ with $G=PSL_{2}(\mathbb{C})$ and $X=\mathbb{C}P^{1}$\red{\cite{Dumas}}. That is to say we have an open covering of $\Sig$ by an atlas $(U_{\alpha},\phi_{\alpha})$ such that $\phi_{\alpha}:U_{\alpha}\rightarrow \mathbb{C}P^{1}$ are charts to open domains in $\mathbb{C}P^{1}$ and in the overlap  $U_{i}\cap U_{j}$  of two charts the change of coordinate map $\phi_{i}\circ \phi^{-1}_{j}$ is locally a restriction of a M{\"o}bius transformations. Denote the space of equivalence classes of $\mathbb{C}P^{1}$-structures on $\Sig$ under diffeomorphisms isotopic to the identity as $\mathcal{CP}(\Sig)$.  \\\hspace*{.5 cm}Now, given a $\mathbb{C}P^{1}$-structure on $\Sig$ we have an underlying complex structure as M{\"o}bius transformations are biholomorphisms and for $\partial^{\pm}_{\infty}M$ it is precisely $[c_{\pm}]$ up to equivalence. This gives us a natural forgetful map $\mathcal{CP}(\Sig)\rightarrow \T$ mapping a $\mathbb{C}P^{1}$-structures to the underlying complex one. Now by the  Uniformisation theorem any complex structure $c$ on $\Sig$ arises as the quotient of the action of some discrete subgroup  $\Gamma_{c}$ of $PSL_{2}(\mathbb{R})$ on $\mathbb{H}^{2}$. As $\Gamma_{c}<PSL_{2}(\mathbb{C})$ as well and $\mathbb{H}^{2}$ can be seen as the unit disc $\Delta\subset \mathbb{C}P^{1}$, we have a canonical $\mathbb{C}P^{1}$-structure associated to a complex structure which we call the {standard Fuchsian complex projective structure} and this gives us a continuous section $\T \rightarrow \mathcal{CP}(\Sig)$.\\\hspace*{.5 cm}
The Schwarzian derivative yields a parametrisation of the fibers  of the forgetful map $\mathcal{CP}(\Sig)\rightarrow \T$. In general, given a domain $\Omega\subset \mathbb{C}$, 
the Schwarzian derivative of a locally injective holomorphic map $u:\Omega \rightarrow \mathbb{C}$ is a holomorphic quadratic differential defined as:
\begin{align*}
	\sigma(u)= ((\frac{u''}{u'})^{'}-\frac{1}{2}(\frac{u''}{u'})^{2})dz^{2}
\end{align*} One way to obtain the expression on the right hand side above is to consider the unique M{\"o}bius transformation
$M_{u}$ which matches with $u$ up to second order derivative. The expression above is precisely the difference of
the third order terms in the local Taylor series expansion of $u$ and $M_{u}$ (see Proposition $6.3.3$ of\cite{Hubbard}). Further, they have two remarkable properties:
\begin{itemize}
	\item For two locally injective holomorphic maps $u,v: \Omega \rightarrow\mathbb{C}$ we have $\sigma(u\circ v)= v^{*}\sigma(u)+\sigma(v)$
	\item $\sigma(A)=0$ if and only if $A$ is a M{\"o}bius transformation.
\end{itemize} In particular, there is a unique map defined up to right conjugation by M{\"o}bius transformations between a given complex projective structure on $S$ and the standard
Fuchsian one which is holomorphic with respect to the underlying canonical complex structure and by virtue of
the properties above, the Schwarzian derivative for this holomorphic map can be defined in a chart independent
way. This is called the Schwarzian parametrisation of a complex projective structure on $S$ with respect to
the standard Fuchsian complex projective structure (see \cite{Dumas},\cite{jmnotes}). When $M \in \QF$, the components of $\partial_{\infty}\mathbb{H}^{3}\setminus \Lambda_{\Gamma}$  have non-trivial Schwarzian derivatives
associated to them by construction which descend to two holomorphic quadratic differentials on $\partial^{\pm}_{\infty}M$ upon
taking quotients. So we define: {\definition  The Schwarzians at infinity $\sigma_{+}$ and $\sigma_{-}$ are the holomorphic quadratic differentials obtained on $(\partial^{+}_{\infty}M,[c_{+}])$ and $(\partial^{-}_{\infty}M,[c_{-}])$ by the Schwarzian parametrisation of the $\mathbb{C}P^{1}$ structures on $\partial^{\pm}_{\infty}M$ with respect to the corresponding standard Fuchsian complex projective structure.}\\\\
Also note that when $M\in\F$ due to the second property above the Schwarzians at infinity are zero. What is more important to us from this discussion is that due to this we get another parametrisation of quasi-Fuchsian manifolds by $Q(S)$ by considering the schwarzian derivative and complex structure appearing at one end at infinity. So we can therefore construct a well-defined map
\begin{equation}\label{eq:schwarzian map}
	\mathcal S:\QF\to\Q(S)~.
\end{equation}
Here $\Q(S)$ denotes the bundle of holomorphic quadratic differentials over $\T$, whose fiber over a point $(S,[h])$ coincides with the vector space $H^0((\Sigma,h),K^2)$, where $K$ denotes the canonical divisor of $(S,[h])$. Consequently, the space $\Q(S)$ is a complex manifold of dimension $3g-3$, where $g$ denotes the genus of $S$. In fact, the map $\mathcal S$ turns out to be injective (see also the discussion below on the construction of its inverse) and, being $\QF$ and $\Q(S)$ manifolds of the same real dimension, the invariance of domain theorem implies that its image is an open subset of $\Q(S)$.

\subsection{Constructing the inverse} We will often use the the inverse map of $\mathcal S$, defined on the image of  $\QF$. Hence it will be useful to quickly discuss its explicit construction. In general, given a holomorphic quadratic differential $q$ on a connected open set $\Omega\subset \C$, there exists a locally injective holomorphic map $f_q:\Omega\to \C$ such that $S(f_q)=q$, see \cite{zbMATH03053379} and \cite[Proposition 6.3.7]{zbMATH05042912}. By the fundamental properties discussed above, $f_q$ is unique up to post-composition with a M\"obius transformation. One can also see that $f_q$, suitably normalized, depends smoothly on $q$.\\

To apply this in our setting, we consider a hyperbolic metric $h$ on $\Sigma$ and $\phi\in H^0((\Sigma,h),K^2)$, and realize $(\Sigma,h)$ as the quotient of the Poincar\'e disc $\D$ by a discrete group $\Gamma_h$ of biholomorphisms. We can then lift $\phi$ to a $\Gamma_h$-invariant holomorphic quadratic differential $\tilde \phi$ on $\D$, and find a locally injective holomorphic map $f_{\tilde\phi}:\D\to\C$ whose Schwarzian derivative is equal to $\tilde \phi$. Since $\tilde\phi$ is invariant under the action of $\Gamma_h$, we have
$$S(f\circ \gamma)=\gamma^*S(f)=\gamma^*\tilde\phi=\tilde\phi=S(f)$$ for every $\gamma\in \Gamma_h$. We deduce that for any $\gamma \in \Gamma_h$ there exists a M\"obius tranformation $\zeta=\zeta(\gamma)$ such that $f\circ\gamma=\zeta(\gamma)\circ f$, providing us with a representation $\zeta : \Gamma_h \rightarrow \PSL(2,\C)$. This construction is exactly the inverse of the map $\mathcal S$, in the sense that if $f:\D\to\Omega^+$ is the biholomorphic map associated to a quasi-Fuchsian manifold $\Hyp^3/\Gamma$ as in Section \ref{eq:schwarzian map}, and $h$ and $\phi$ are the induced hyperbolic metric and holomorphic quadratic differential on $\Sigma$, then $f_{\tilde\phi}=f$ and the image of the representation $\zeta$ coincides with the quasi-Fuchsian group $\Gamma$.

\chapter{Measured foliation at infinity of quasi-Fuchsian manifolds}\label{ch3}
We will now focus on the first part of the thesis which is on realising measured foliations at the boundary at infinity of quasi-Fuchsian manifolds close to the Fuchsian locus. 

\section{The bundle $Q(\Sig)$ and measured foliations realised by holomorphic quadratic differentials.}\label{horifoli}\hfill\break
Now, given $q\in Q(S,c)$, away from its zeroes we can always perform a local change of coordinates $z\mapsto w:=\int\sqrt{q}$ on $S$ such that $q=f(z)dz^{2}$ has the local expression $dw^{2}$ with respect to this coordinate. If we write $w=w_{1}+\textbf{i}w_{2}$ then the holomorphic quadratic differential $dw^{2}$ canonically equips $\mathbb{C}$ with two measured foliations: 
\begin{itemize}
	\item The horizontal measured foliation, which are immersed lines given by  $w_{2}=\text{const.}$, i.e the horizontal lines of $\mathbb{C}$. Its transverse measure being given by $|\mathfrak{I}\sqrt{dw^{2}}|=|dw_{2}|$.
	\item The vertical measured foliation, which are immersed lines along which $w_{1}=\text{const.}$, i.e the vertical lines of $\mathbb{C}$.
	Its transverse measure being given by $|\mathfrak{R}\sqrt{dw^{2}}|=|dw_{1}|$.
\end{itemize} 
Moreover, notice that the horizontal measured foliations (resp. vertical measured foliations) of quadratic differential $|dw^{2}|$ gives us all the horizontal lines (resp. vertical lines) on $\mathbb{C}$, thus inspiring the nomenclature. So we define: 
{\definition The horizontal measured foliation  $\mathrm{hor}_{[c]}(q)$ (resp. vertical measured foliation $\mathrm{ver}_{[c]}(q)$) of $q$ on $(S,c)$ is a smooth singular measured foliation, with singularities at the zeroes of $q$, which is obtained locally by pulling back the horizontal measured foliations (resp. vertical measured foliation) of $dw^{2}$ under the change of coordinate $z\mapsto w:= \int \sqrt{q}$ defined above. The transverse measure for the horizontal measured foliation (resp. vertical measured foliation) is given by $|\mathfrak{I}\sqrt{q}|$ (resp. $|\mathfrak{R}\sqrt{q}|$).} \\\\If the measured foliation $\f$ is realised by a holomorphic quadratic differential $q$ then $\f$ has a prong of order $k+2$ at the point where $q$ has a zero of order $k$. Also, if $q$ is expressed as $dw^{2}$ in local coordinates  then $-q$ is nothing but the differential $-dw^{2}$, whose horizontal (resp. vertical) foliations are given by the vertical lines (resp. horizontal lines) on $\mathbb{C}$. We thus have the simple but important remark:

{\remark\label{remarkk} $\mathsf{hor}_{[c]}(q)$ is measure equivalent to $\mathsf{ver}_{[c]}(-q)$ in $\MF$ for all holomorphic quadratic differential $q\in Q(S,c)$. }\\\\
Now we recall a well-known theorem of Teichm{\"u}ller that enables us to interpret quasi-conformal deformations in terms of measured foliations (see for example\red{\cite{carlos}}):
{\theorem Given two conformal classes $[c]$ and $[c']$ in $\T$, there exists an unique quasi-conformal map $\phi:[c]\rightarrow [c']$ with minimal eccentricity coefficient among all quasi-conformal maps from $[c]$ to $[c']$. The associated Beltrami differential $\mu$ is of the form $k\frac{ |q|}{q}$ for some unique holomorphic quadratic differential $q\in T^{*}_{[c]}\T$ with $||q||_{1}=1$ and for some $k\in [0,1)$. The quadratic differential $q$ is denoted as the initial quadratic differential of the map. There is a quadratic differential $q'\in T^{*}_{[c']}\T$ denoted as the terminal quadratic differential with the property that the map $\phi$ takes zeroes of $q$ to zeroes of $q'$ of the same order. In the natural local coordinates $z = x + \textbf{i}y$ of $q$ in the complement of its zeroes, and the natural coordinates $w = x' + \textbf{i}y'$ for $q'$, we have:
	\begin{align*}
		x' = \sqrt{k}x, y' = (1/ \sqrt{k})y
	\end{align*}.}

By virtue of the above, the metric $d_{\T}$ defined in Equation (\ref{teichmullergeo}) is a metric on $\T$. Further, we also have that given $(S,c)$, a quadratic differential $q \in T^{*}_{[c]}\T$ with $||q||_{1}=1$ and $t \geq 0$, there is a conformal class $(S,[c_{t}])$, and a unique extremal map $\phi_{t} : (S,c) \rightarrow (S,[c_{t}])$ such that:
\begin{align*}
	d_{\T}([c],[c_{t}])=\frac{1}{2}\log k_{\phi_{t}}
\end{align*}
Choosing $\log k_{\phi_{t}}=2t$ gives us that the image of the map $\mathbb{R}_{> 0} \rightarrow T^{*}\T\cong Q(S)$ with $t\mapsto [c_{t}]$ is a properly embedded geodesic line in $\T$ with respect to the metric $d_{\T}$ which is called the {Teichm{\"u}ller geodesic} with initial quadratic differential $q$. This also makes the metric complete (see, for example \cite{carlos} for more further details). 

\subsection{The Theorem of Hubbard-Masur and the sections $q^{\f}$ and $q^{-\f}$}\label{HMsection} \hfill\break
Define now a map $\mathsf{hor}:Q(\Sig,c)\rightarrow \MF$ which sends a holomorphic quadratic differential $q$ to its horizontal measured foliation $\mathsf{hor}_{c}(q)$ and we consider its image in $\MF$. Then we have from\red{\cite{hubbardmasur}} (see also \cite{Kerckhoff1980},\cite{Wolf1996}):
{\theorem\label{hubbardmasur} The map $\mathsf{hor}_{c}: Q(S, c) \rightarrow \mathcal{MF}(S)$ is a homeomorphism}.

{\remark Given any measured foliation $\f$ on Riemann surface, we may not find a holomorphic quadratic differential realising it as its horizontal measured foliation, for example notice the example in  \S {$2$} of Chapter {$II$} of \red{\cite{hubbardmasur}}. However as noted in the paper, this issue can be taken care of as according to  Proposition {$2.2$} of\red{\cite{hubbardmasur}} as in the equivalence class of $\f$ in $\MF$ there exists a representative that can be realised by a holomorphic quadratic differential.}\\

We will consider the inverse of this map for our purpose which will provide sections of $T^{*}\T$ for a fixed foliation $\f$. This we define as follows: 
{\definition For a given equivalence class of foliation $\f \in \MF$, define 
	\begin{align*}
		q^{\f}:\T \rightarrow T^{*}\T
	\end{align*} to be the map, which associates to each equivalence class of complex structure $[c]$ on $\Sig$, the unique holomorphic quadratic differential $q^{\f}_{c}$ such that $\mathsf{hor}_{c}(q^{\f}_{c})$ is measure equivalent to $\f$. } \\

We will denote the holomorphic quadratic differential associated to $[c]$ as $q^{\f}_{[c]}$. In fact, the theorem of Hubbard-Masur holds true if we consider vertical measured foliations instead of horizontal ones and thus we can consider the map 
\begin{align*}
	q^{-\f}: \T \rightarrow T^{*}\T
\end{align*} which associates to a complex structure $[c]$ on $\Sig$, the unique holomorphic quadratic differential $q^{-\f}_{[c]}$, such that vertical measured foliation of $q^{-\f}_{[c]}$ on $(\Sig,[c])$ is measure equivalent to $\f$. We can thus reformulate Remark \red{\ref{remarkk}} as: 

{\remark\label{lelele} For a given measured foliation $\f \in \MF$, $q^{-\f}_{[c]}=-q^{\f}_{[c]}$ for any $[c]$ in $\T$.}\\\\
These sections are $C^{0}$, in particular they fail to be $C^{1}$ (\cite{Royden1971}). It also follows from a result of Masur in\red{\cite{Masur1995}} that when $q$ is generic, then the sections $q^{+\f},q^{-\f}: \T \rightarrow T^{*}\T$ are real-analytic. In fact we can state:
{\lemma\label{arationaliff} A measured foliation is arational if and only if the holomorphic quadratic differential realising it as the horizontal measured foliation over each point in $\T$ is generic. Moreover in this case the map $q^{\f}: \T\rightarrow T^{*}\T$ is smooth.}\\	

One implication is obvious as if the quadratic differential is generic then the measured foliation it realises has three prongs at each zero. The other side can be seen easily as if $F$ is arational then any Whitehead equivalent measure foliation is isotopic to $F$ (since by definition there are no saddle connections to collapse). We also denote the subset of arational measured foliations as $\mathcal{MF}_{0}(S)$ and note that this is a dense subset of $\MF$ as well. 
\subsection{Filling measured foliations and the theorem of Gardiner-Masur}\label{gardinermasur}\hfill \break 
First we recall that:
	{\definition\label{fillupdef}
	A pair of measured foliations $(\mathsf{F},\mathsf{G})$ is said to {fill $S$} if for any measured foliation $\mathsf{H}\in \MF $ on $S$ we have, 
	\begin{align*}
	i(\mathsf{H},\f)+i(\mathsf{H},\g)>0
	\end{align*}.
} 

Recall that we denote the space of equivalence classes of pairs of filling foliations as $\FMF$. Notice that the pair $(\mathsf{hor}(q),\mathsf{ver}(q))$ automatically satisfies the topological property of {filling up $S$} by the following Lemma {$5.3$} of\red{\cite{Gardiner1991}} (see \cite{grenobleschool} as well):

 {\lemma\label{2.4} Given a holomorphic quadratic differential $q$ on a Riemann surface $(\Sig,[c])$, the pair $(\mathsf{hor}_{[c]}(q),\mathsf{ver}_{[c]}(q))$ fill $\Sig$.}\\
 
 Given a pair $(\f,\g)\in \FMF$ we can thus ask whether under a fixed complex structure up to equivalence, a pair $(\mathsf{F},\mathsf{G})$ can be realized as the horizontal and vertical measured foliation of the same holomorphic quadratic differential $q$. The answers are affirmative and can be summarized as: 
 
 	{\theorem(\red{\cite{Gardiner1991}},\cite{Wentworth2007})\label{GM}
 	A pair $(\mathsf{F},\mathsf{G})$ of measured foliations on $S$ is filling if and only
 	there is a complex structure $c$ and a holomorphic quadratic differential $q \in (S, c)$ such that $(\mathsf{F},\mathsf{G})$ are respectively measure equivalent to the vertical and horizontal
 	foliations of $q$. Moreover, the class $[c]$ up to  diffeomorphism isotopic to the identity is determined uniquely
 	and for each $c \in [c]$ the quadratic differential $q$ realising the filling pair $(\f,\g)$ is also unique. }\\ \\
 So, for a pair $(\f,\g)$ that fill we have: 
{\corollary\label{gola} The sections $q^{\f}$ and $q^{-\g}$ intersect uniquely in $T^{*}\T$ at the point $([c],q)$ determined by Theorem \red{\ref{GM}}. Moreover $\f=\mathsf{hor}_{[c]}(q)$ and $\g=\mathsf{hor}_{[c]}(-q)$ in $\MF$ where $q=q^{\f}=q^{-\g}=-q^{\g}$.}
\subsection{Extremal lengths of measured foliations}\label{extremal}\hfill\break
Given a simple closed curve $\gamma$ on $(\Sig,c)$ we define its extremal length as \begin{align*}
	\ext_{c}(\gamma)= \sup_{[[g]]}\frac{l^{2}_{g}(\gamma)}{\text{Area}(g)}
\end{align*}
where $l_{g}(\gamma)$ is the length computed with respect to $g$ and the supremum is taken over all Riemannian metrics in the conformal class $c$. This definition of extremal length on closed curve extends to that of a measured foliation (see\red{\cite{Kerckhoff1980}}) where extremal length of a foliation $\f$ defines a continuous function on $\T$ 
\begin{align*}
	\ext(\f):\T\rightarrow \mathbb{R}\\
	[c]\mapsto \ext_{[c]}(\f)
\end{align*}
where $\ext_{[c]}(t\f)=t^{2}\ext_{[c]}(\f)$ for $t>0$. Using the sections $q^{\f}_{[c]}$ we can also express this as (see\red{\cite{hubbardmasur}}):
{\lemma\label{babagola} For $[c]\in \T$, the extremal length of $\f\in\MF$ is given by \begin{align*}
		\ext_{[c]}(\f)=\int_{(S,c)}|q^{\f}_{[c]}|dz\wedge d\bar{z}=||q^{\f}_{[c]}||_{1}=||q^{-\f}_{[c]}||_{1}
\end{align*} }
Here $||q||_{1}=\int_{\Sig}|\phi(z)|dz \wedge d\bar z$ where $q=\phi(z)dz^{2}$. Another simple observation that follows from this is:

{\corollary\label{gorombhat} $\ext_{[c]}(\f)=\ext_{[c]}(\g)$ where $[c]\in \T$ is determined by Theorem \red{\ref{GM}}. }
{\proof As $q=q^{\f}_{[c]}=q^{-\g}_{[c]}$ at $[c]$ where the sections $q^{\f}$ and $q^{-\g}$ intersect (see Remark \red{\ref{gola}}), we get the result using Lemma \red{\ref{babagola}}. \qed}\\\\
We also have a well-known variational formula for extremal lengths originally due to Gardiner (see \cite{gardiner1984}, also\cite{Liu2016}) which states:
{\lemma \label{gardform}Let $[c_{t}]$ for $0\leq t<0$ be a smooth $1$-parameter family of conformal classes and $\f$ be a smooth measured foliation in $\MF$ then 
\begin{align*}
	(d \ext_{[c_{0}]}\f) (\mu)=\mathfrak{R}\left\langle q^{\f}_{[c_{0}]},\mu\right\rangle 
\end{align*} where $\mu\in T_{[c_{0}]}\T$ is the Beltrami differential denoting the derivative $\ddt [c_{t}]$.  }

{\remark It is known from \cite{Royden1971} that the extremal length function is not $C^{2}$ in general. The fact that the Hubbard-Masur map is not $C^{1}$ can be now seen from the above formula.}
 \subsection{Intersection of $q^{\f}$ and $q^{-\g}$ in $T^{*}\T$}\label{sectionintersectionofsection}\hfill\break
Using the tools developed thus far we can state:	{\proposition \label{traninter}
 	Let $(\mathsf{F},\mathsf{G})\in\mathcal{FMF}(\Sig)$ be a pair of  measured foliations that fill $S$ and $q^{\mathsf{F}},q^{-\mathsf{G}}: \mathcal{T}(S) \rightarrow T^{*}\mathcal{T}(S)$ be the associated sections defined before. Then their images in $T^{*}\T$ intersect uniquely and the projection of the intersection into $\T$ is the unique critical point of the function $\ext(\f)+\ext(\g):\T \rightarrow \mathbb{R}$. Moreover when $(\f,\g)\in \mathcal{FMF}_{0}(S)$ then the sections intersect transversely. }
 {\proof 
 		Given, $(\mathsf{F},\mathsf{G}) \in \mathcal{FMF}(S)$, the sections $q^{\mathsf{F}}_{[c_{0}]},q^{-\mathsf{G}}_{[c_{0}]}$ intersect if and only if $q^{\f}_{[c_{0}]}=q^{-\g}_{[c_{0}]}=-q^{\g}_{[c_{0}]}$ from Remark \red{\ref{lelele}}. If $\mu$ be the Beltrami differential denoting $\ddt [c_{t}]$, then we have from Lemma \red{\ref{gardform}} that:
 		
 	\begin{align*}
 		\ddt\ext_{[c_{t}]}(\mathsf{F})=  \mathfrak{R}(\langle q^{\mathsf{F}}_{[c_{0}]}, \mu \rangle)=\mathfrak{R}(\langle q^{-\mathsf{G}}_{[c_{0}]}, \mu \rangle),  \mu \in T_{[c]}\mathcal{T}(S).
 		\end{align*}
  We can also consider $\g$ to be a measured foliation realised as the horizontal measured foliation and consider
 	\begin{align*}
 	\ddt\ext_{[c_{t}]}(\mathsf{G})= \mathfrak{R}(\langle q^{\mathsf{G}}_{[c_{0}]}, \mu \rangle),  \mu \in T_{[c]}\mathcal{T}(S).
 	\end{align*}
 	
 	Hence $[c_{0}]$ is a critical point of the function $\ext(\mathsf{F})+\ext(\mathsf{G})$ if and only if $q^{\f}_{[c_{0}]}= -q^{\g}_{[c_{0}]}$. The existence and uniqueness of the critical point follows from Theorem \ref{GM} and Remark \ref{gola}. \\
   Now assume $(\f,\g)\in \mathcal{FMF}_{0}(S)$. If $dq^{\f},dq^{-\g}: T\T \rightarrow T T^{*}\T$ be the respective differentials then for transversality of intersection we need to show that if $dq^{\f}(\nu)=dq^{-\g}(\nu)$ for $\nu \in T\T$ then $\nu = 0$. Recalling the definitions of the sections $q^{\f},q^{-\g}$ this amounts to showing:
   {\lemma Consider a deformation of the type $(c_{t},q_{t}),t\geq 0$ with $(c_{0},q_{0})$ being the point of intersection of $q^{\f},q^{-\g}$ with $(\f,\g)\in \mathcal{FMF}_{0}(S)$. Let $\f_{t},\g_{t}$ be the horizontal and vertical measured foliations realised by $(c_{t},q_{t})$ and assume $\ddt \f_{t} = \ddt \g_{t}=0$. Then the deformation is trivial. }
   
   {\proof Recall that $q^{\f},q^{-\g}:\T \rightarrow T^{*}\T$ is a smooth map when $(\f,\g)$ are arational. For this, we consider $\pi:\widehat{S}\rightarrow S$ to be the canonical double cover branched over the zeroes of $q$  (see\red{\cite{Lanneau2003}}, Construction {$1.2$} and also\red{\cite{Dumas2015}}) such that $\pi^{*}(q)=\omega_{q}^{2}$ where $\omega_{q}$ is a holomorphic $1$ form on $(\widehat{S},[\widehat{c}])$ where $[\widehat{c}]=\pi^{*}[c]$.	 It follows from Lemma $2$ of\red{\cite{Lanneau2003}} that $([c],q)\mapsto ([\widehat{c}],\omega_{q}^{2})$ is a local embedding, so a deformation $([c_{t}],q_{t})$ in the generic stratum induces a deformation $([\widehat{c_{t}}],\omega_{q_{t}}^{2})\in Q(\widehat{\Sig})$ maintaining the same strata. Let $\widehat{\f_{t}}$ nd $\widehat{\g_{t}}$ be the horizontal and vertical foliations realised by $\omega_{q_{t}}^{2}$ on $(\widehat{\Sig},[\widehat{c_{t}}])$ with $\widehat{\f_{0}}=\widehat{\f}$ (resp. $\widehat{\g_{0}}=\widehat{\g}$) being the lift of $\f$ (resp. $\g$) in the double cover. Consider now  $\gamma$ to be a cycle in the  relative homology group $H^{-}_{1}(\widehat{\Sig},V_{\omega_{q}};\mathbb{C})$ where the latter is the eigenspace of $H_{1}(\widehat{\Sig},V_{\omega_{q}};\mathbb{C})$ consisting of cycles invariant under the involution of $\widehat{S}$ and the set $V_{\omega_{q}}$ denotes the set of zeroes of $\omega_{q}$. The real and imaginary part of the holonomy $\int_{\gamma}\omega_{q}$ are precisely the intersection number with the horizontal and vertical foliations of $\omega_{q}^{2}$. This gives us
    Period coordinates \begin{align*}
   	\mathsf{per} &: Q_{0}(S)\hookrightarrow H^{1}_{-}(\widehat{S},V_{\omega_{q}};\mathbb{C})\cong \mathbb{R}^{12g-12}\\
   & \mathsf{per}(q) \mapsto \int_{\gamma} \omega_{q} \end{align*} which is an immersion. Our assumption then translates to $\ddt i(\gamma,\widehat{\f_{t}})=0$ and $\ddt i(\gamma,\widehat{\g_{t}})=0$. This gives $\ddt \mathsf{per}(q_{t})(\gamma)=\ddt i(\gamma,\widehat{\g_{t}})+\textbf{i}i(\gamma,\widehat{\f_{t}})=0$, where we can assume $\gamma$ is fixed when one restricts to deformations maintaining strata. Since the period map is an immersion, it follows that this deformation is necessarily trivial. \qed \\\\ }
 	Define now : {\definition For a pair $(\f,\g)$ that fill $\Sig$, we denote $\p(\f,\g)$ to be the critical point in $\T$ of the function $\ext(\f)+\ext(\g):\T \rightarrow \mathbb{R}$}.\\\\
 	It is a simple observation from the definition that if the transverse measure of a foliation $\f$ is given by $|\mathfrak{R}\sqrt{q^{\f}}|$, then the corresponding holomorphic quadratic differential realising the measured foliation $t\f$ over the same Riemann surface structure on $\Sig$ is nothing but $t^{2}q^{\f}$ since then the transverse measure is given by $|\mathfrak{R}\sqrt{t^{2}q^{\f}}|$ which is equal to $t|\mathfrak{R}\sqrt{q^{\f}}|$. In the notation of the critical point $\p(\f,\g)$ this implies that:
 	{\lemma\label{scalediff} If $(\f,\g)$ fill $S$, then $\p(t\f,t\g)=\p(\f,\g)$ where $t>0$.}
 	{\proof Observe that $\ext(t\f)+\ext(t\g)= t^{2}(\ext(\f)+\ext(\g))$ from the definition of extremal length function and hence $\ext(t\f)+\ext(t\g)$ and $\ext(\f)+\ext(\g)$ have the same critical points. \qed} \\\\
 	Also we have the observation that this point is uniquely determined by the second coordinate. That is:
 	{\lemma\label{oneone} If $\mathsf{p}(\mathsf{F},\mathsf{G})=\mathsf{p}(\mathsf{F'},\mathsf{G}) $, then $\mathsf{F}= \mathsf{F'}$ in $\MF$.}
 	{\proof Let $\p(\f,\g)=[c]$ be the unique point in $\T$ and $q\in Q(\Sig,c)$ be the unique holomorphic quadratic differential realising $(\f,\g)$ as its horizontal and vertical measured foliations respectively. For the pair $(\f',\g)$ we have that $\p(\f',\g)=p(\f,\g)=[c]$. Since on $Q(S,c)$ the choice of $q'$ realising $\g$ as its vertical measured foliation is unique from the theorem of Hubbard-Masur, we have that $q'=q$. But by definition $\f'$ is measure equivalent to $\mathsf{hor}_{[c]}(q')=\mathsf{hor}_{[c]}(q)=\f$.\qed}
 	
	\subsection{Quotient of $Q(\Sig)$ under the action of $\mathbb{R}_{>0}$ and intersection of $[q^{\f}]$ and $[q^{-\g}]$ }\label{sectionquotientinter}\hfill\break 
There is a natural action of $\mathbb{R}_{>0}$ on $(Q(S, [c])-{0}) $ which sends every non-zero $q \in Q(S, [c]) $ to $ t^{2}q$,$ \forall t \in (0, \infty)$. We can thus define $Q^{1}(S,c)$ to be quotient $(Q(S, [c])-{0})/\mathbb{R}_{>0}$ under this action. Clearly $Q^{1}(S,c)$ is isomorphic to $ UT^{*}_{[c]}\mathcal{T}(S)$ from Proposition \ref{keylemma}, where the latter denotes the unit cotangent space at a point $[c] \in \T$. The next proposition is a similar result for the sections $[q^{\f}]$, which are the images of $q^{\mathsf{F}}$ under the quotient map. We can now address the main proposition of this section involving the intersection of the equivalence classes $[q^{\f}]$ and $[q^{-\g}]$ in $UT^{*}\T$ for a filling pair $(\f,\g)$: 
{\proposition\label{line}
	Let $(\mathsf{F},\mathsf{G})\in \FMF$ be a pair filling measured foliations on $S$, then the projection of the intersection of the sections $[q^{\f}]$,$[q^{-\g}] $ in $ UT^{*}\mathcal{T}(S)$ onto $\mathcal{T}(S)$ is a Teichm{\"u}ller geodesic line given by $t\mapsto \p(\sqrt{t}\f,\frac{1}{\sqrt{t}}\g)\in \T$ for $t>0$. Moreover, when $(\f,\g)\in \mathcal{FMF}_{0}(S)$ then the sections intersect transversely in $UT^{*}\T$.
}

{\proof
	
	We first note that if a pair $(\f,\g)$ fill $\Sig$ then so do the pairs $(t\f,\g)$,$(t\f,\frac{1}{t}\g)$ and $(\f,t\g)$ for any $t>0$. \\
	Let $[c_{t}]\in \T$ be an equivalence class of complex structures such that the two sections $q^{t\f},q^{-\g}$ meet over $[c_{t}]$. Then by definition we have $ t^{2}q^{\mathsf{F}}_{[c_{t}]}=q^{-\mathsf{G}}_{[c_{t}]}$ which is equivalent to $tq^{\f}_{[c_{t}]}=\frac{1}{t}q^{-\g}_{[c_{t}]}$. Since the foliation $t\f$ is realised by $t^{2}q^{\f}_{[c_{t}]}$ on the same complex structure, we have that $ q^{\sqrt{t}\mathsf{F}}_{[c_{t}]}=q^{-\frac{1}{\sqrt{t}}\mathsf{G}}_{[c_{t}]}$ for some $t>0$ at the point $[c_{t}]$. \\This is equivalent to the fact that $[c_{t}]$ is the unique critical point of the function $\ext(\sqrt{t}\mathsf{F})+\ext(\frac{1}{\sqrt{t}}\mathsf{G})$ since $(\sqrt{t}\mathsf{F},\frac{1}{\sqrt{t}}\mathsf{G})$ fill $S$. As $[c_{t}]$ is identified with $\mathsf{p}(\sqrt{t} \f, \frac{1}{\sqrt{t}}\g) $, the projection of the intersections is along 
	\begin{align*}
		\mathbb{R}_{>0} \rightarrow \T \\
		t \mapsto [c_{t}]=\p(\sqrt{t}\f,\frac{1}{\sqrt{t}}\g)
	\end{align*}
	So it now suffices to show that the path $t\rightarrow \p(t\f,\g)$ is a geodesic for the Teichm{\"u}ller metric $d_{\T}$ on $\T$. We first note that $[c]$ being the critical point $\ext(\f)+\ext(\g)$ for a filling pair $(\f,\g)$ also implies that $[c]$ is the critical point for the function
	 \begin{align*}
	\ext(\f)\ext(\g): \T \rightarrow \mathbb{R}
		\end{align*} 
		where we use the fact that $\ext_{[c]}(\f)=\ext_{[c]}(\g)$ from corollary \ref{gorombhat}. So $\p(\sqrt{t}\f,\frac{1}{\sqrt{t}}\g)$ is a critical point for $\ext(\sqrt{t}\f)\ext(\frac{1}{\sqrt{t}}\g)$ and since $\ext_{[c]}(t\f)=t^{2}\ext_{[c]}(\f)$ we also have as a consequence that the point $\p(\sqrt{t}\f,\frac{1}{\sqrt{t}}\g)$ is a critical point for the function $\ext(\f)\ext(\g)$. Now it has been shown in\cite{Gardiner1991} that the set of critical points for the function $\ext(\f)\ext(\g)$ is a Teichm{\"u}ller geodesic line in $\T$ when $(\f,\g)$ fill $S$. Moreover, from Lemma \ref{oneone} the map $t\mapsto \p(t\f,\g)\in \T$ is injective. Finally, we observe that every critical point of $\ext(\f)\ext(\g)$ is also a critical point for $\ext(\alpha\f)+\ext(\beta \g)$ for some $\alpha, \beta >0$ and hence the the image of the map $t\rightarrow \p(t\f,\g)$ is the entire Teichm{\"u}ller geodesic. \\
		For transversality we can use Proposition \ref{traninter} as the pairs $q^{t\f}$ and $q^{-\g}$ intersect transversely, i.e, \begin{align*}
			T_{([c_{t}],q_{t})} T^{*}\T = T_{([c_{t}],q_{t})}(q^{t\f}(\T)) \bigoplus T_{([c_{t}],q_{t})} (q^{-\g} (\T))
		\end{align*} is true for all $t\geq 0$ and $(\f,\g)\in \mathcal{FMF}_{0}(S)$. The result follows when we take quotient.\qed   \\\\
			For a given pair $(\f,\g)\in \FMF$ we call  $t \mapsto \p(\sqrt{t}\f,\frac{1}{\sqrt{t}}\g)\in \T$ for $t>0$ as $\pp(\f,\g)$. A further corollary is that:
		{\corollary The line $\pp(\f,\g)$ is a Teichm{\"u}ller geodesic.}\\\\
		For what will follow later we also want to conclude more about the regularity of the intersection of the sections $q^{\f}$ and $q^{-\g}$ when $(\f,\g)$ are a filling arational pair.

	\section{Necessary condition for paths with small filling measured foliations at infinity}\label{minsec}
	
The goal of this section is to establish a necessary conditions that small differentiable  paths in $\QF$ starting from $\F$ should satisfy if the measured foliations at infinity are given by a filling pair $(t\fp,t\fm)\in \FMF$ at first order at $\F$. For this reason following\cite{Uhlenbeck1984}, we will study the curve $\beta_{([c],q)}(t^{2})\in \QF$, for $t>0$ small enough, which is parametrised by the data of the unique minimal surface it contains, i.e, the first fundamental form $I$ is in the conformal class $[c]\in \T$ and the second fundamental form $\II$ is given by $t^{2}\mathfrak{R}(q)$ for some $q\in T^{*}_{[c]}\T$. We will compute first-order estimates for Schwarzians at infinity for this path and determine that if the measured foliations at infinity for this path is indeed $(t\fp,t\fm)$ at first-order at $\F$ then $[c]$ is indeed the unique critical point for the functions $\ext(\fp)+\ext(\fm):\T\rightarrow \mathbb{R}$ and $q$ is the unique holomorphic differential we obtain from the theorem of Gardiner-Masur that realise $(\fp,\fm)$ on $[c]$ . To this end, we will begin by recalling the definition of the Schwarzians at infinity and then introduce the tools required to make the computations regarding their first-order estimations.
\subsection{Schwarzians at infinity of quasi-Fuchsian manifolds}\label{introductionary}\hfill \break 
Recall that the boundary at infinity $\partial_{\infty}\mathbb{H}^{3}$ is identified with the complex projective space $\mathbb{C}P^{1}$. The components $\partial^{+}_{\infty}M$ and $\partial^{-}_{\infty}M$ are the quotient of domains in $\mathbb{C}P^{1}$ under the action of $\Gamma < PSL_{2}(\mathbb{C})$ and so they carry canonical $\mathbb{C}P^{1}$-structure which is a $(G,X)$ structure on $S$ with $G=PSL_{2}(\mathbb{C})$ and $X=\mathbb{C}P^{1}$\red{\cite{Dumas}}. That is to say we have an open covering of $\Sig$ by an atlas $(U_{\alpha},\phi_{\alpha})$ such that $\phi_{\alpha}:U_{\alpha}\rightarrow \mathbb{C}P^{1}$ are charts to open domains in $\mathbb{C}P^{1}$ and in the overlap  $U_{i}\cap U_{j}$  of two charts the change of coordinate map $\phi_{i}\circ \phi^{-1}_{j}$ is locally a restriction of a M{\"o}bius transformations. Denote the space of equivalence classes of $\mathbb{C}P^{1}$-structures on $\Sig$ under diffeomorphisms isotopic to the identity as $\mathcal{CP}(\Sig)$.  \\\hspace*{.5 cm}Now, given a $\mathbb{C}P^{1}$-structure on $\Sig$ we have an underlying complex structure as M{\"o}bius transformations are biholomorphisms and for $\partial^{\pm}_{\infty}M$ it is precisely $[c_{\pm}]$ up to equivalence. This gives us a natural forgetful map $\mathcal{CP}(\Sig)\rightarrow \T$ mapping a $\mathbb{C}P^{1}$-structures to the underlying complex one. Now by the  Uniformisation theorem any complex structure $c$ on $\Sig$ arises as the quotient of the action of some discrete subgroup  $\Gamma_{c}$ of $PSL_{2}(\mathbb{R})$ on $\mathbb{H}^{2}$. As $\Gamma_{c}<PSL_{2}(\mathbb{C})$ as well and $\mathbb{H}^{2}$ can be seen as the unit disc $\Delta\subset \mathbb{C}P^{1}$, we have a canonical $\mathbb{C}P^{1}$-structure associated to a complex structure which we call the {standard Fuchsian complex projective structure} and this gives us a continuous section $\T \rightarrow \mathcal{CP}(\Sig)$.\\\hspace*{.5 cm}
The Schwarzian derivative yields a parametrisation of the fibers  of the forgetful map $\mathcal{CP}(\Sig)\rightarrow \T$. In general, given a domain $\Omega\subset \mathbb{C}$, 
the Schwarzian derivative of a locally injective holomorphic map $u:\Omega \rightarrow \mathbb{C}$ is a holomorphic quadratic differential defined as:
\begin{align*}
\sigma(u)= ((\frac{u''}{u'})^{'}-\frac{1}{2}(\frac{u''}{u'})^{2})dz^{2}
\end{align*} One way to obtain the expression on the right hand side above is to consider the unique M{\"o}bius transformation
$M_{u}$ which matches with $u$ up to second order derivative. The expression above is precisely the difference of
the third order terms in the local Taylor series expansion of $u$ and $M_{u}$ (see Proposition $6.3.3$ of\cite{Hubbard}). Further, they have two remarkable properties:
\begin{itemize}
	\item For two locally injective holomorphic maps $u,v: \Omega \rightarrow\mathbb{C}$ we have $\sigma(u\circ v)= v^{*}\sigma(u)+\sigma(v)$
	\item $\sigma(A)=0$ if and only if $A$ is a M{\"o}bius transformation.
\end{itemize} In particular, there is a unique map defined up to right conjugation by M{\"o}bius transformations between a given complex projective structure on $S$ and the standard
Fuchsian one which is holomorphic with respect to the underlying canonical complex structure and by virtue of
the properties above, the Schwarzian derivative for this holomorphic map can be defined in a chart independent
way. This is called the Schwarzian parametrisation of a complex projective structure on $S$ with respect to
the standard Fuchsian complex projective structure (see \cite{Dumas},\cite{jmnotes}). When $M \in \QF$, the components of $\partial_{\infty}\mathbb{H}^{3}\setminus \Lambda_{\Gamma}$  have non-trivial Schwarzian derivatives
associated to them by construction which descend to two holomorphic quadratic differentials on $\partial^{\pm}_{\infty}M$ upon
taking quotients. So we define: {\definition  The Schwarzians at infinity $\sigma_{+}$ and $\sigma_{-}$ are the holomorphic quadratic differentials obtained on $(\partial^{+}_{\infty}M,[c_{+}])$ and $(\partial^{-}_{\infty}M,[c_{-}])$ by the Schwarzian parametrisation of the $\mathbb{C}P^{1}$ structures on $\partial^{\pm}_{\infty}M$ with respect to the corresponding standard Fuchsian complex projective structure.}\\\\
Also note that when $M\in\F$ due to the second property above the Schwarzians at infinity are zero.

		\subsection{Minimal surfaces in hyperbolic $3$ manifolds}\label{minimalsurface}\hfill\break
	 Let $i: \Sig \rightarrow M$ be an immersion of $\Sig$ into $M$. Associated to this, we have the data of the {first fundamental form } denoted as $I$ on $\Sig$ which is the induced metric on $\Sig$ inherited from the ambient space and the {second fundamental form} denoted as $\II$ which is a symmetric bilinear form on the tangent bundle $T\Sig$. Associated to the couple $(I,\II)$ we have a unique self-adjoint operator $B$, called the {shape operator}, which satisfies the relation \begin{align*}
	 	\II(x,y)=I(Bx,y)=I(x,By)
	 \end{align*} where $x,y \in T_{p}S, \forall p \in S$. We can also define another quantity associated to the immersion called the third fundamental form $\III$ defined as $\III(x,y)=I(Bx,By)=I(B^{2}x,y)$ for $x,y,B$ as before. See\cite{Perdigao} for reference.\\\hspace*{.5 cm}
	 The eigenvalues of $B$ give us the principal curvatures associated to the immersion. Since minimal immersions are those immersions for which the mean curvature of $\Sig$ is zero we can define as well that: 
	{\definition An immersion is minimal if and only if $B$ is traceless. }
\\

On the other hand, assume we are given a smooth Riemannian metric $g$ on $\Sig$ and a symmetric bilinear form $h$ on $T\Sig$. The couple $(g,h)$ will be associated to the data of an immersion  of $\Sig$ if it satisfies the following:

\begin{enumerate}

	\item The Codazzi equation, $d^{\nabla}h=0$, where $\nabla$ is the Levi-Civita connection of $g$.
	\item The Gauss equation, $K_{I}=-1+\deter_{g}(h)$, where $K_{g}$ denotes the Gaussian or intrinsic curvature of the metric $g$.
	\end{enumerate}}
 Now we define the notion of an {almost}-Fuchsian manifold as: 

{\definition A quasi-Fuchsian hyperbolic $3$-manifold $M \cong \Sig \times \mathbb{R}$ is called {almost-Fuchsian} if it contains a closed minimal surface homeomorphic to $\Sig$ with principal curvatures in $(-1,1)$.}\\\\
		Now, following Corollary $2.9$ of\red{\cite{Krasnov2007}} we can show that if $M$ is an almost-fuchsian manifold then the closed minimal surface it contains is unique. This allows us to parametrise almost-Fuchsian hyperbolic manifolds by an open subset $\Omega$ in $T^{*}\T$, following \cite{Krasnov2007}, Theorem $2.12$:
	{\theorem\label{methodandoutline} There exists an open neighbourhood $\Omega \subset T^{*}\T$ of the zero-section such that we have the following bijection:
	\begin{enumerate}
		\item Given a point $([c],q)\in \Omega$, there exists a unique $g\in \AF$ such that the unique minimal surface in $(M,g)$ has first fundamental form conformal to $[c]$ and the second fundamental form $\II$ is $\mathfrak{R}(q)$.
		\item Given a almost-Fuchsian metric $g \in \AF$ on $M$, the induced metric and  second fundamental form of its unique minimal surface are specified by a point in $\Omega$. 
\end{enumerate}} 
Using the minimal immersion data we will now use foliations by surfaces equidistant from it to approach the boundary at infinity of $M$.

 \subsection{Fundamental forms at infinity}\label{sectionfundadorms}\hfill\break
 Given a minimal surface in an almost-Fuchsian manifold $M$, we can consider the surfaces equidistant from it in $M$ at an oriented distance. These surfaces foliate the almost-Fuchsian manifold and we can then compute the associated first and second fundamental forms for these surfaces in terms of the data associated to the minimal embedding. We thus can formulate the following\red{\cite{Krasnov2008}} (see also  \cite{jeanmarcpaper}):
 	{\lemma
 	Let $S$ be a complete, oriented, smooth surface with principal curvatures in $(-1,1)$ immersed minimally into an almost-Fuchsian manifold homeomorphic to $S \times (-\infty,\infty)$ and let $(I,\II,B)$ be the associated data of the immersion. Then $\forall r \in \mathbb{R}$ the set of point $S_{r}$ at an oriented distance $r$ from $S$ is a smooth embedded surface with data $(I_{r},\II_{r},B_{r})$ where :
 	\begin{enumerate}
 		\item $I_{r}(x,y)=I((cosh(r)E+sinh(r)B)x,(cosh(r)E+sinh(r)B)y)$
 		\item $\II_{r}=\frac{1}{2}\frac{dI_{r}}{dr}$
 		\item $B_{r}=(cosh(r)E+sinh(r)B)^{-1}(sinh(r)E+cosh(r)B)$
 	\end{enumerate}
 	where $E$ is the identity operator and $S_{r}$ is identified to $S$ through the closest point projection.}\\

The {fundamental forms at infinity} denoted as $I^{*},\II^{*}$ and introduced in\red{\cite{Krasnov2008}}, quantify the asymptotic behaviour of the quantities described above as $r \rightarrow \infty$. In particular, it estimates the data at the conformal class at infinity of an almost-Fuchsian manifold $M$ with respect to the, unique minimal surface with principal curvature in $(-1,1)$, it contains. \\\\
Formally, $I^{*}=\lim_{r \rightarrow \infty}2e^{-2r}I_{r}$ and $\II^{*}=\lim_{r \rightarrow \infty}(I_{r}-\III_{r})$. However the lemma above gives us explicit formulae to express the same in terms of $(I,\II,\III)$ and we use that to define:
{\definition
	Adhering to the notations introduced above, the first fundamental form at infinity is given by the expression $I^{*}=\frac{1}{2}(I+2\II+\III)$ and 
	the second fundamental form at infinity is given by $\II^{*}=\frac{1}{2}(I-\III)$.
}\\\\
The pair $(I^{*},\II^{*})$ satisfy a modified version of Gauss equation at infinity (see\cite{jmnotes},\cite{Krasnov2007}), i.e, $\trace(B^{*})=-K^{*}$ where $B^{*}$ is the shape operator associated to $I^{*}$ and $\II^{*}$. The Codazzi equation on the other hand, holds as it is by considering the Levi-Civita connection $\nabla^{*}$ compatible with $I^{*}$. The thing for importance to us is the expression for curvature associated to $I^{*}$ which we call $K^{*}$.\red{\cite{Krasnov2008}} further provide us with an expression for it using the data of the immersed minimal surface:
{\lemma \label{formatinfinity}
	With the notation as above, 
	\begin{align*}
	K^{*}= \frac{2K}{\deter(E+B)}=\frac{-1+\deter(B)}{1+\deter(B)}
	\end{align*} 
	where $K$ is the Gaussian curvature of the minimal immersion of $S$. }\\
{\remark
	The second equality follows from the fact that $Tr(B)=0$ the immersion being minimal and $(I,\II)$ satisfy the Gauss-Codazzi equations.}\\

In general $I^{*}$ need not be a hyperbolic metric. In fact,\red{\cite{Krasnov2008}} note that when multiplied by the correct conformal factor to take $I^{*}$ to the unique hyperbolic metric in its conformal class, the corresponding change in $\II^{*}$ is closely related to the Schwarzian derivative $\sigma$ associated to that end. So we have the following accounting for the change in $(\II^{*})_{0}$ when we apply a conformal change to $I^{*}$: 
{\lemma\label{conformalchange} Let $I^{*}_{1}$ and $I^{*}_{2}$ be two metrics in the same conformal class at infinity such that $I^{*}_{2}=e^{2f}I^{*}_{1}$ for some smooth function $f$, then the traceless parts $(\II^{*}_{1})_{0}$ and $(\II^{*}_{2})_{0}$ are related as:
\begin{align*}
	(\II^{*}_{2})_{0}-(\II^{*}_{1})_{0}=\hess_{I^{*}_{1}}(f)-df\otimes df + \frac{1}{2}||df||I^{*}_{1}-\frac{1}{2}(\Delta f)I^{*}_{1}
\end{align*}} In fact if we consider a holomorphic map $u:\Omega \rightarrow \mathbb{C}$ where $\Omega\subset \mathbb{C}$ then $\mathfrak{R}(\sigma(u))$ is precisely the term on the right hand side of the above equation when we consider $2f=\log(\frac{u'}{u})$. We thus have the following: (a geometric proof of which can also be found in Appendix $A$ of\red{\cite{Krasnov2008}})}:

{\theorem\label{schlenker}
	If $I^{*}$ is hyperbolic, then $(\II^{*})_{0}=-\mathfrak{R}(\sigma)$, where $(\II^{*})_{0}$ denotes the traceless part of the second fundamental form at infinity and $\sigma$ is the Schwarzian at infinity.
}\\\\
In the following sections we will use the parametrisation of almost-Fuchsian metric in terms of the data of $I$ and $\II$ of its unique minimal surface and compute $I^{*}$ and $\II^{*}$ at the two ends. For this, we will use the curve introduced by Uhlenbeck in\red{\cite{Uhlenbeck1984}} to prove that quasi Fuchsian metrics close enough to $\F$ admit a minimal surface with data given by a point $([c],sq)\in T^{*}\T$, for $s>0$ sufficiently small.

\subsection{The curve $\beta_{([c],q)}(s)$ in $\QF$}\label{sectioncurveinQF}\hfill\break\\
	Let $([c],q)$ be a point in $ T^{*}\T$. As discussed in\red{\cite{Uhlenbeck1984}} we consider a smooth $1$-parameter curve $\beta_{([c],q)}(s)$ , $s \in [0,\epsilon)$  of almost-Fuchsian metrics starting from the Fuchsian locus which are given by the data 
	\begin{align*}
		\beta_{([c],q)}:\mathbb{R}_{>0}\rightarrow T^{*}\T\supset &\Omega\cong\AF\subset \QF\\
	s\mapsto ([c],s\mathfrak{R}(q))
	\end{align*}
	 of the unique minimal surface such that $I$ is $e^{2u_{s}}h$ for some function $u_{s}:\Sig \rightarrow \mathbb{R}$, where $h$ denotes the unique hyperbolic metric in the conformal class $c$, and $\II=s\mathfrak{R}(q)$. At $s=0$, we have $u_{s}=0$ and $\beta_{([c],q)}(0)\in \F$. \\\\
	By Gauss equation, the pair $(e^{2u_{s}}h,s\mathfrak{R}(q))$ is the data of the minimal immersion if and only if $u_{s}$ is a solution for the following equation:
		\begin{align}\label{gausscodazzo}
	e^{-2u_{s}}(-\Delta_{h} u_{s}-1)= -1 + e^{-4u_{s}}s^{2}\deter_{h}(\mathfrak{R}(q))
	\end{align}

	{\remark This is a reformulation of the Gauss equation $K_{g}=-1+\deter(B)$ for the pair $(e^{2u_{s}}h,s\mathfrak{R}(q))$. The left hand side comes from Lemma \red{\ref{formula}}. The right hand side comes by the formulae for change of basis for determinants.}\\\\
		It is then known from\cite{Uhlenbeck1984} (see also\cite{Trautwein2019}) that a unique solution exists for Equation \ref{gausscodazzo} which in terms of almost-Fuchsian metrics can be formulated as:
	
	{\proposition\label{citethisprop} 	For $0\leq s < \epsilon$, $\exists$ a unique almost-Fuchsian metric with a unique minimal surface whose $(I,\II)$ is given by the pair $([c],s\mathfrak{R}(q))$.} 
	
	\subsection{First order estimations of measured foliations at infinity for the path $\beta_{([c],q)}(t^{2})$}\label{sectionfoliationsatinfinity}\hfill\break
	We will in fact do all the computations for the path $\beta_{([c],q)}(s)$ and perform a change of variable of $s$ to $t^{2}$ later on. This is done in order to account for the correct factor of the measured foliations at infinity at first order that we will compute eventually. Let us fix some notations: For a fixed $s>0$, the data of the minimal surface $S$ embedded into an almost-Fuchsian manifold $M$ can be expressed as 
	$I_{s}=e^{2u_{s}}h$ and $\II_{s}=s\mathfrak{R}(q)$.
		Since $B_{s}=I_{s}^{-1}\II_{s}$ and $\III_{s}(x,y)=I_{s}(B_{s}^{2}x,y)$, a simple computation in local orthonormal coordinates for $I_{s}$ show that $\III_{s}$ is equal to $-s^{2}e^{-2u_{s}}(\deter_{I_{s}}(\mathfrak{R}(q)))h$.
	 Let the associated fundamental forms at infinity for this manifold be $I^{*}_{s},\II^{*}_{s}$ and the curvature at infinity be $K^{*}_{s}$. Further, let the Schwarzian at infinity associated to the two ends of $\beta_{([c],q)}(s)$ be called $\sigma^{s}_{+}$ and $\sigma^{s}_{-}$.\\\hspace*{.5 cm}
	Our goal first is to say that $I^{*}_{s}$ is hyperbolic at first order at $s=0$, so that we can apply a first order version of Theorem \red{\ref{schlenker}} relating the traceless part of $\II^{*}_{s}$ with the real part of $\sigma^{s}_{+}$.

{\lemma\label{firstorderi}
	$I^{*}_{s}$ is hyperbolic at first order at $\F$ i.e the derivative of the curvature $K^{*}_{s}$ with respect to $s$ vanishes at $\F$ and $K^{*}_{0}=-1$ at $s=0$. Moreover, for this path $\ddtt  (\II^{*}_{s})_{0}=-\mathfrak{R}(q)$  .}
{\proof
	First we note that at $s=0$ we are at the Fuchsian locus and from Lemma \ref{formatinfinity} we have $K^{*}_{0}=-1$.  Now observe that \begin{align*}
		\frac{d}{ds}\Bigr|_{\substack{s=0}} (\deter(B_{s})) = \frac{d}{ds}\Bigr|_{\substack{s=0}} s^{2}e^{-2u_{s}}\mathfrak{R}(q) = 0
	\end{align*}Therefore using Lemma \ref{formatinfinity} \begin{align*}
	\frac{d}{ds}\Bigr|_{\substack{s=0}}	K^{*}_{s}= \frac{d}{ds}\Bigr|_{\substack{s=0}} \frac{-1+\deter(B_{s})}{1+\deter(B_{s})}=0
	\end{align*}
	For the next part we first see that $u_{s}$ solves:
	
	\begin{align}\label{step1}
	e^{-2u_{s}}(-\Delta_{h} u_{s}-1)= -1 + e^{-4u_{s}}s^{2}\deter_{h}(\mathfrak{R}(q))
	\end{align}
	
	We define the non-linear map:
	\begin{align}
	F: W^{(2,2)}(S) \times [0,\infty)  \rightarrow   L^{2}(S) &&\\
   	F(u_{s},s)=-\Delta_{h} u_{s} -1\label{step2} +e^{2u_{s}}-e^{-2u_{s}}s^{2}\deter_{h}(\mathfrak{R}(q)) 
	\end{align}
	where $W^{(2,2)}$ is the classical Sobolev space. The Fr{\'e}chet derivative is given by:
\begin{align*}
	dF_{(u_{s},s)}(\dot u_{s},\dot s)=-\Delta \dot u_{s} +2\dot u_{s} e^{2u_{s}}+2\dot u_{s} e^{-2u_{s}}s^{2}\deter_{h}(\mathfrak{R}(q))-2e^{-2u_{s}}s \deter_{h}(\mathfrak{R}(q))
\end{align*}It is clear that $u=u_{s}$ solves Equation (\ref{step1}) if and only if $(u_{s},s)$ is a solution for Equation (\ref{step2}). We now see the linearised operator with respect to $u$ of the function $F(u,s)$ which has the expression: 
	\begin{align*}
	L_{u_{s}}(\dot u_{s})=-\Delta_{h}\dot u_{s} +2\dot u_{s}(e^{2u_{s}}+e^{-2u_{s}}s^{2}\deter_{h}(\mathfrak{R}(q))
	\end{align*}

	So, at the solution $(u_{s},s)=(0,0)$ we have that
	\begin{align*}
		L_{u_{0}}:W^{(2,2)}(S) \rightarrow L^{2}(S)\\ \dot u_{s} \mapsto -\Delta_{h} \dot u_{s}+ 2\dot u_{s}
	\end{align*} is a linear isomorphism of vector spaces see \cite{cmcdifian}. So  
we can apply Implicit Function Theorem to get the solution curve $\gamma : [0,\epsilon)\rightarrow W^{(2,2)}(S)\times [0,+\infty)$ where $\gamma(s):=(u_{s},s)$  satisfies $F(u_s,s)=0,\forall s \in [0,\epsilon)$ (see also Theorem $5.13$ of\red{\cite{Trautwein2019}}). Now \begin{align*}
	dF_{(u_{s},s)}(\dot u_{s},\dot s)=L_{u_{s}}(\dot u_{s})-2e^{-2u_{s}}s\deter_{h}(\mathfrak{R}(q))
\end{align*}
We have that $dF(\dot u_{s},\dot s)=0$ for this path so,
\begin{align*} 
\dot u_{s}= L_{u_{s}}^{-1}(2se^{-2u_{s}}\deter_{h}(\mathfrak{R}(q))\\
\implies \ddtt u_{s}=0
\end{align*}
Now recall that $I^{*}_{s}=\frac{1}{2}(I_{s}+2\II_{s}+\III_{s})=\frac{1}{2}(e^{2u_{s}}h+2s\mathfrak{R}(q)-s^{2}e^{-2u_{s}}\deter_{I_{s}}(\mathfrak{R}(q))h)$. So taking derivative at $s=0$ gives us: 
\begin{align}\label{japarishlekh}
\frac{d}{ds}\Bigr|_{\substack{s=0}} I^{*}_{s}= \frac{1}{2}(2\dot u_{0}h+2\mathfrak{R}(q)+0)=\mathfrak{R}(q).
\end{align}

Now, by same computation observe that \begin{align*}
	\ddtt \II^{*}_{s}= \ddtt(I_{s}-\III_{s})=0.\end{align*} \\
	Now,\red{\cite{Krasnov2008}} further shows that that the mean curvature at infinity is expressed as $H^{*}_{s}=-K^{*}_{s}$. 
	Writing \begin{align*}
	&	\II^{*}_{s}=(\II^{*}_{s})_{0}+H^{*}_{s}I^{*}_{s}\\
		\implies \ddtt (-\II^{*}_{s})_{0} &= H^{*}_{0}\ddtt I^{*}_{s}+I^{*}_{0}\ddtt H^{*}_{s} \\
\implies \ddtt(\II^{*}_{s})_{0}&= -\ddtt I^{*}_{s}
	\end{align*} 
	From Equation \ref{japarishlekh} we have our claim.  \qed}\\ \hfill\break
Now from Theorem \ref{schlenker} we know that if $I^{*}_{s}$ is hyperbolic then $(\II^{*}_{s})_{0}$ is equal to $-\mathfrak{R}(\sigma^{s}_{+})$ where $\sigma^{s}_{+}$ is the Schwarzian at the positive end at infinity. Moreover if we parametrise the quasi-Fuchsian space by the data of hyperbolic metric and Schwarzian at infinity at one end at infinity:
\begin{align*}
 \mathfrak{A}:	\QF \rightarrow T^{*}\mathcal{T}(\partial^{+}_{\infty}M)\\
	g\mapsto (I^{*}_{h},(\II^{*}_{h})_{0})
	\end{align*}
then at the point $[c]\in \F$ of the Fuchsian locus we have a canonical decomposition of the tangent space $T_{[c]}(\QF)=T_{[c]}\T \bigoplus T^{*}_{[c]}\T $ where the first factor is the tangent to the Fuchsian locus denoting the derivative of the hyperbolic metric and the second factor is the derivative of the schwarzian at infinity at the Fuchsian locus. When considering the path $\beta_{([c],q)}(s)$ we have that $\ddtt \mathfrak{A}( \beta_{([c],q)}(s))=(\mathfrak{R}(q),-\mathfrak{R}(q))$. So, 

{\lemma\label{firstorderschwarz}
	For the path $\beta_{([c],q)}(s)$, $\ddtt \sigma^{s}_{+}=q$ 
}\\

Note that we have done all the computation at one boundary component at infinity of $M$, which is almost-Fuchsian. However, recall that $M$ admits a foliation by surfaces "parallel" to the minimal surface, and the corresponding computation for the other component will differ by a sign. To be precise, $I^{*}_{s}=\frac{1}{2}(I_{s}-2\II_{s}+\III_{s})$ when we consider the component at the boundary at the other end at infinity (see \cite{Krasnov2007}). The rest of the computation follows as it is. Keeping this in mind we have:
{\proposition\label{fluckit}
	For the path $\beta_{([c],q)}(s)$, $\ddtt \sigma^{s}_{\pm}=\pm q$  .}\\

Upon a change of variable from $s$ to $t^{2}$, we will now show that the path $\beta_{([c],q)}(t^{2})$ is indeed a candidate for a path of almost-Fuchsian metrics with measured foliations at infinity given by the pair $(t\fp,t\fm)$ at first order. Denote the measured foliations at infinity for a metric in this path to be $\mathfrak{F}(\beta_{([c],q)})(t^{2})=(\f^{t}_{+},\f^{t}_{-})$. Here again a 1-parameter family of foliations $\f^{t}$ is said to be equivalent to a foliation $\f$ at first order, if for any given closed curve $\gamma$ on $\Sig$ 
\begin{align*}
	\ddt i(\gamma,\f^{t} )=i(\gamma,\f) \\ 
	\implies i(\gamma,\f^{t})= ti(\gamma,\f)+o(t)
\end{align*}
	 Note by Proposition \red{\ref{fluckit}} $\sigma^{t^{2}}_{\pm}=\pm t^{2}q+o(t^{2})$ at first order at $t^{2}=0$ (or at $\F$) for the path $\beta_{([c],q)}(t^{2})$. So we need to show:

		{\lemma\label{firtorderfoliations} For any isotopy class of simple closed curve $\gamma$  on $S$ we have:
		\begin{align*}
		i(\gamma,\f^{t})-i(\gamma,\mathsf{hor}_{[c]}(t^{2}q))= o(t)
		\end{align*}	}
		{\proof 
		We proceed with the computation :
		\begin{align*}
		i(\gamma,\f^{t})-i(\gamma,\mathsf{hor}_{[c]}(t^{2}q))&= \inf_{\gamma}\int_{\gamma} | \mathfrak{I}\sqrt{\sigma_{t^{2}}}|- \inf_{\gamma}\int_{\gamma} |\mathfrak{I}\sqrt{t^{2}q} | \\
		&=\inf_{\gamma}\int_{\gamma} |\mathfrak{I} \sqrt{t^{2}q+O(t^{2})}|- \inf_{\gamma}\int_{\gamma} |\mathfrak{I}\sqrt{t^{2}q} | \\
		&=\inf_{\gamma}\int_{\gamma}|\mathfrak{I}\frac{\sigma_{t^{2}}-t^{2}q}{\sqrt{\sigma_{t^{2}}}+\sqrt{t^{2}q}}|
		=o(t)
		\end{align*}
		and we have our claim. \qed \\
\subsection{Necessary conditions for paths with given small filling measured foliations at infinity at first order}\label{sectioncondition}\hfill\break		
		So we see that for $0< t <\epsilon$ metrics in the path $\beta_{([c],q)}(t^{2})$ have that the measured foliations at infinity $(\f^{t}_{+},\f^{t}_{-})$ which at first order at the Fuchsian locus is given by the filling pair $(t\fp,t\fm)$. Secondly, notice that the point $([c],q)$ is the unique point associated to the filling pair $(\fp,\fm)$ via the Gardiner-Masur Theorem and $\beta_{([c],q)}(0)=[c]=\p(\fp,\fm)$ is the unique critical point for the functions $\ext(\fp)+\ext(\fm)$ by Proposition \red{\ref{traninter}}. These two points will precisely help us to formulate the condition we want paths with given first order behaviour of measured foliations at infinity to satisfy. 
	 {\proposition \label{neccesary}  Let $(\fp,\fm)$ be a pair of measured foliations that fill $S$. Then there exists differentiable curve of quasi-Fuchsian metrics $t \mapsto \beta_{([c],q)}(t^{2})$, for $t\in[0,\epsilon)$, starting from the Fuchsian locus such that the image $\mathfrak{F}(\beta_{([c],q)}(t^{2}))\in \FMF$ is measure equivalent to $(t\fp,t\fm)$ at first order at $\F$. Moreover $[c]\in \T$ is the unique critical point of the function $\ext(\fp)+\ext(\fm): \T \rightarrow \mathbb{R}$ and $q\in T^{*}_{[c]}\T$ is the unique holomorphic quadratic differential realising $(\fp,\fm)$.}
{\proof From  Proposition \ref{fluckit} and Lemma \ref{firtorderfoliations} we have that measured foliations at infinity for this path are given by the pair $(t\mathsf{hor}_{[c]}(q),t\mathsf{hor}_{[c]}(-q))$ at first order at $t=0$. The proposition is then a consequence of Proposition \ref{traninter}.\qed}

\section{Uniqueness of Paths with Small Filling Foliations}\label{pathexists}	

The goal of this section is to construct differentiable paths realising small pairs of measured foliations at infinity which are arational and filling, utilising the condition proved in Proposition \red{\ref{neccesary}} that they should satisfy. To do that first we will introduce the {blow-up} space $\widetilde{\QF}$ which we obtain by replacing $\F \subset \QF$ with its "unit normal bundle" $UN\mathcal{F}(\Sig)$. Following the strategy of\red{\cite{bonahon05}} 
we then consider subsets of $\QF$ called $\Wfp$ (and $\Wfm$), defined as: {\definition For $\f\in \mathcal{MF}(S)$, define $\Wf^{+}\subset \QF$ (resp. $\Wf^{-}$) to be the set of quasi-Fuchsian metrics $g$ such that the foliation at the end at $+\infty$ (resp. $-\infty$) is $t\f$ for all $t\geq 0$.}\\\\Call $\widetilde{\mathcal{W}^{\pm}_{\f}}$ the image of $\Wf^{\pm}$ under the lift $\QF\rightarrow \widetilde{\QF}$. For $(\fp,\fm)\in \mathcal{FMF}_{0}(S)$ we will then show in the blow-up space $\widetilde{\Wfp}$ and $\widetilde{\Wfm}$ are submanifolds of $\widetilde{\QF}$ and that their boundaries $\partial \widetilde{\Wfp}$ and $\partial\widetilde{\Wfm}$ contained and intersecting in $\partial \widetilde{\QF}$ where $\partial \widetilde{\Wfp}\cap \partial \widetilde{\Wfm}$ intersect transversely and project onto the Teichm{\"u}ller geodesic line $\mathsf{P}(\fp,\fm)\in \T$ as defined in Proposition \red{\ref{line}}. 
We then consider the map ${\pi}:{\Wfp}\cap{\Wfm}\rightarrow {\mathbb{R}^{2}}$, sending $g \in \widetilde{\Wfp}\cap\widetilde{\Wfm}$ to $(a,b)$ where $\mathfrak{F}(g)=(a\fp,b\fm)$ for some $a,b\geq 0$ by definition and lift the setting to the blow-up $\widetilde{\pi}: \widetilde{{\Wfp}}\cap \widetilde{\Wfm} \rightarrow \widetilde{\mathbb{R}^{2}}$ where the latter denotes the blow-up of $\mathbb{R}^{2}$ at the origin. The existence of paths with given small foliations then follows as we show that $\widetilde{\pi}$ at $\partial \widetilde{\Wfp} \cap \partial \widetilde{\Wfp}$ is a local diffeomorphism.

	\subsection{The normal bundle $N\F$ to $\F$ }\label{sectionnormal}\hfill\break\\
First, let us recall that the Weil-Petersson metric endows $\T$ with a symplectic form $\omega_{WP}$ which is defined on the cotangent space as 
\begin{align*}
	\omega_{WP}(.,.)= -\mathfrak{I}\left\langle .,.\right\rangle _{WP}
\end{align*} Moreover, $\T$ is endowed with an almost complex structure $J_{WP}$ such that $\left\langle q_{1},q_{2}\right\rangle_{WP} = \omega_{WP}(q_{1},J_{WP}(q_{2}))$ defined by $J_{WP}(q_{2})=\textbf{i}q_{2}$. Further, recall the notion of the {character variety} $\chi_{PSL_{2}(\mathbb{C})}$  which is an irreducible affine variety of complex dimension $6g-6$ (\cite{Goldman1988}) and can be expressed as the GIT quotient:
\begin{align*}
\chi_{PSL_{2}(\mathbb{C})}:= \mathsf{Hom}(\pi_{1}(S),PSL_{2}(\mathbb{C}))//PSL_{2}(\mathbb{C})
\end{align*} As each hyperbolic structure on $S$ is uniquely determined by the holonomy representation of $\pi_{1}(S)$ in to the group of orientation preserving isometries of $\mathbb{H}^{2}$, identified with $ PSL_{2}(\mathbb{R})$, the Fuchsian locus $\F(\cong \T)$ can be identified with a connected component of the set of {real points} in $\chi_{PSL_{2}(\mathbb{C})}$(\cite{Goldman1988}). Now, the group of orientation preserving isometries of $\mathbb{H}^{3}$ is identified with $PSL_{2}(\mathbb{C})$ and so the space $\QF$ is also identified with an open neighbourhood of $\F$ in $\chi_{PSL_{2}(\mathbb{C})}$ via discrete faithful representations from $\pi_{1}(S)\rightarrow PSL_{2}(\mathbb{C})$ which we can associate to a quasi-Fuchsian metric. This provides $\QF$ with a complex structure $J_{PSL_{2}(\mathbb{C})}^{2}=-1$ which also gives a decomposition of the tangent space at a point $[c]\in\F$ as $T_{[c]}\QF=T_{[c]}\F\bigoplus J_{PSL_{2}(\mathbb{C})}T_{[c]}\F
$. This enables use to recall {Bers' Simultaneous Uniformisation Theorem}: 
{\theorem[\red{\cite{bers1965moduli}}]\label{bersthm}
	The map $B:\QF \rightarrow \T \times \mathcal{T}(\overline{S})$ mapping a quasi-Fuchsian metric $ g \in \QF $ to the pair $B(g):=([c_{+}],[c_{-}])$ is biholomorphic with respect to the complex structure $J_{PSL_{2}(\mathbb{C})}$ of $\QF$ coming from the character variety and the complex structure $J_{WP}$ on $\T$.} \\\\
  It is also clear that $\F$ is the pre-image of the diagonal.  If $v\in T_{[c]}\QF$ is the tangent vector to the path $t\rightarrow g_{t}$ of quasi-Fuchsian metrics for $0\leq t <\epsilon$ at $t=0$ such that $g_{0}\in \F$ then the derivative of the Bers map at a point $[g_{0}]\in \T$  is given by  
$
d_{[g_{0}]} B(v) := (q_{1},q_{2})$
 where $q_{1},q_{2}$ are two holomorphic quadratic differentials in $Q(S,[g_{0}])$ denoting tangent vectors to $\T$ associated to the variation of the complex structures at two ends at infinity corresponding to the vector $v$. We thus have that $
d_{[g_{0}]}B(J_{PSL_{2}(\mathbb{C})}v)=(J_{WP}(q_{1}),-J_{WP}(q_{2}))=(\textit{\textbf{i}}q_{1},-\textit{\textbf{i}}q_{2})
$ where $\textit{\textbf{i}}^{2}=-1$ and the minus sign in the second factor is simply due to the opposite orientation of $S$.\\\hspace*{.5 cm}As $\F$ is identified with $\T$ by considering the unique hyperbolic metric $m$ in each conformal class $[c]$, when we consider a deformation of hyperbolic structures on $S$ the tangent vector $T_{m}\F\cong T_{m}\T$ is given by $\mathfrak{R}(q)$ for some $q\in Q(S,c)$ (see\cite{Fischer}). If one considers the variation of the hyperbolic metrics in the conformal classes associated to the two ends at infinity, then $d_{[c]}B(J_{PSL_{2}(\mathbb{C})}v)=(\mathfrak{R}(\textit{\textbf{i}}q_{1}),\mathfrak{R}(-\textit{\textbf{i}}q_{2}))\in T_{[c]}\F\times T_{[c]}\F$. We can thus define:
\begin{definition}
	The {normal bundle} $N\F\rightarrow \F$ is the bundle whose fiber $N_{[c]}\F$ over each conformal class $[c]\in\T$ is the vector space isomorphic to the quotient $T_{[c]}\QF/T_{[c]}\F$.
\end{definition}
The fibers $N_{[c]}\F$ are $J_{PSL_{2}(\mathbb{C})}T_{[c]}\F$ where $J_{PSL_{2}(\mathbb{C})}$ is the almost complex structure of $\QF$. So $N_{[c]}\F$ is the set of tangent vectors $v_{([c],q)}$ such that $dB(J_{PSL_{2}(\mathbb{C})}v_{([c],q)})=(\mathfrak{R}(\textit{\textbf{i}}q), \mathfrak{R}(\textit{\textbf{i}}q))\in T\T \times T\T $ for $q\in Q(S,c)$.
Now, let $v_{([c],q)}$ be the vector tangent to the path $\beta_{([c],q)}: [0,\epsilon) \rightarrow \QF$ at $\F$. That is to say:
\begin{align*}
	v_{([c],q)}=\ddtt \beta_{([c],q)}(s)\in T_{[c]}\QF
\end{align*} So we formulate:
{\proposition $v_{([c],q)}$ is an element of $N_{[c]}\F$. }
{\proof As the first fundamental form at infinity $I^{*}_{s}$ remains hyperbolic at first order at $s=0$ it follows from Lemma \red{\ref{firstorderi}} that \begin{align*}
	dB(v_{([c],q)})=(\mathfrak{R}(q),-\mathfrak{R}(q))
	\end{align*} Then, \begin{align*}
		dB(J_{PSL_{2}(\mathbb{C})}v_{([c],q)})=(\mathfrak{R}(\textit{\textbf{i}}q),\mathfrak{R}(\textit{\textbf{i}}q))
	\end{align*}  So $J_{PSL_{2}(\mathbb{C})}v_{([c],q)}\in T_{[c]}\F$, i.e, tangent to the Fuchsian locus. The decomposition of the tangent space $T_{[c]}\QF= T_{[c]}\F \bigoplus J T_{[c]}\F $ at the Fuchsian locus then implies that $v_{([c],q)}\in N_{[c]}\F$.\qed\\

\subsection{The blow-up $\widetilde{\QF}$ of $\QF$ at $\F$}\label{sectionblowup}\hfill\break
For constructing the blow-up $\widetilde{\QF}$ consider again the bundle $N\F$ defined over $\F$. So we take the quotient of $N_{[c]}\F\setminus{0}$ by the action of $\mathbb{R}_{>0}$ called the unit normal bundle $UN_{[c]}\F$ and let $\overline{v_{([c],q)}}$ be the image of $v_{([c],q)}\in N_{[c]}\F$.  Consider now $\eta(N\F)\rightarrow UN\F$ to be the canonical differentiable line bundle and also we have a canonical linear map $\eta ({N\F)}\rightarrow \F $. We can show now that $\eta(N\F )\setminus {(\text 0-section)}\cong N\F\setminus \F$ is a diffeomorphism. Note that the zero section of $\eta(N\F)$ is again $UN\F$.\\\hspace*{.5 cm}
Now let $\tau$ be a tubular map for $\F$ in $\QF$ and $\theta: \eta(N\F)\rightarrow N\F$ be the canonical map. The blow up $\widetilde{\QF}$ is the unique differentiable structure on $(\QF\setminus \F)\cup UN\F$ for which the inclusion map $\QF \setminus \F \subset \widetilde{\QF}$ and the map:
\begin{align*}
	&\eta(N\F) \rightarrow \widetilde{\QF} \\
	&	v \mapsto \tau (\theta(v)) \hspace{.3cm}{\text{when}\hspace{.1cm} v\in \eta(N\F)\setminus{\text{(0-section)}}}\\
	&v \mapsto v \hspace{.3cm}\text{otherwise}
\end{align*} are embeddings (see, for example\cite{Brocker1982}, pg. $128$). \\

 Moreover, it is a manifold with boundary $\partial \widetilde{\QF}$ which is $UN\F$. We observe that the natural inclusion $\QF\setminus\F\hookrightarrow \widetilde{\QF}$ lifts to $\QF\rightarrow \widetilde{\QF}$ by sending $[c]\in\F$ to $UN_{[c]}\F\in \partial \widetilde{\QF}$.\\\hspace*{.5 cm}
Recall now the spaces of $\QF$, $\Wf^{+}$ and $\Wf^{-}$ for some $\f \in \mathcal{MF}(S)$.
Since $\F \subset \Wf^{\pm}$, we have the natural inclusion $\Wf^{\pm}\setminus\F\hookrightarrow\QF\setminus\F$ which again lifts to a unique embedding $\Wf^{\pm} \rightarrow \widetilde{\QF}$ that we obtain by replacing a point $[c]\in \F \subset \Wf^{\pm} $ by the unique normal vector in $\overline{v_{([c],q)}}\in UN_{[c]}\F$ tangent to $\Wf^{\pm}$ at $[c]$. So by construction of the blow-up, for $t$ small enough $(\overline{v_{([c],q)}},t)\mapsto([c],t^{2}\mathfrak{R}(q))$ has $\f^{t}_{\pm}$ given by $t\f$ at first order at $\F$. By Lemma \red{\ref{firstorderschwarz}} and Theorem \red{\ref{hubbardmasur}},  $\overline{v_{([c],q^{\pm\f}_{[c]})}}$ is indeed that vector.
We thus define: 
{\definition $\widetilde{ \Wf^{+}}$ and $\widetilde{ \Wf^{-}}$ are the respective lifts of $\Wf^{+}$ and $\Wf^{-}$ into $\widetilde{\QF}$.}\\\\
Having removed the Fuchsian locus which carry trivial Schwarzians from $\QF$, we will now parametrise elements in $\widetilde{\QF}$ by the data of the holomorphic quadratic differential being realised as the Schwarzian derivatives at the boundaries at infinity to show that $\widetilde{\Wf^{+}}$ and $\widetilde{\Wf^{-}}$ are submanifolds with boundary of $\widetilde{\QF}$. For this we first recall that the Schwarzians at infinity parametrise the $\mathbb{C}P^{1}$-structures on $\partial^{+}_{\infty}M$ and $\partial^{-}_{\infty}M$ (see \cite{Dumas}). More generally, if we denote the space of equivalence classes of $\mathbb{C}P^{1}$-structures on $\Sig$ under diffeomorphisms isotopic to the identity as $\mathcal{CP}(\Sig)$, then the Schwarzian derivative provides us parametrisation of the fibers of the forgetful map $\mathcal{CP}(S)\rightarrow \T$(\cite{Dumas}). So we formulate:
{\lemma\label{graft} A quasi-Fuchsian metric on $M\cong S \times \mathbb{R}$ can be uniquely parametrised by the Schwarzian parameterisation of the $\mathbb{C}P^{1}$-structures on one component of the boundary at infinity.}   

{\proof Denote the induced metric on the boundary of the convex core $\mathcal{CC}(M)$ as $m$. The lemma then follows in two steps. First, a quasi-Fuchsian metric on $M$ can be uniquely determined by the data of induced metric and measured bending lamination in the boundary of the convex core from\red{\cite{AFST_1996_6_5_2_233_0}} where we have that the map from $\QF \rightarrow\T \times \ML $ that associates the data of the unique pleated surface to the data of the quasi-Fuchsian metric is biholomorphic and so, smooth. Consider now the data $(m_{\pm},d_{m_{\pm}}(l(\lambda_{\pm})))$ which gives us a point in $T^{*}\T$, $d_{m_{\pm}}(l(\lambda_{\pm}))$ being the derivative of the length function for the measured lamination $\lambda_{\pm}$ computed at $m_{\pm}$. The claim is then a consequence of Theorem $4.1$ of\red{\cite{Dumas}}, originally due to Thurston and the main theorem in \cite{Krasnov2009}, which together state that the smooth \text{Grafting map} sending the data of the induced metric and measured bending lamination $(m_{\pm},\lambda_{\pm})\in \T \times \ML \cong T^{*}\T \rightarrow  \mathcal{CP}(S)$  on the boundary of the convex core $\partial^{\pm}_{\infty}\mathcal{CC}(M)$ to the data $([c_{\pm}],\sigma_{\pm})\in T^{*}\mathcal{T}(\partial^{\pm}_{\infty}M)$ at the boundary at infinity is a homeomorphism and $C^{1}$.\qed
 	
\subsection{Submanifolds $\widetilde{\Wfp}$, $\widetilde{\Wfm}$ and the intersection  $\partial \widetilde{ \Wfp}\cap\partial\widetilde{ \Wfm}$}\label{sectionblowupintersect}\hfill\break
 As Lemma \red{\ref{graft}} allows us to parametrise quasi-Fuchsian structures uniquely by the data of Schwarzian derivatives at the boundaries $\partial^{\pm}_{\infty}M$ we can thus proceed to discuss the following:

{\proposition\label{5.2} For $\f \in \mathcal{MF}_{0}(S)$, the set ${\Wf^{+}}$ (resp. ${\Wf^{-}}$) is a smooth submanifold with boundary of ${\QF}$ of dimension $\text{dim}(\T)+1$ with boundary $\F$. In the blow-up $\widetilde{\QF}$, the lifts $\widetilde{\Wfp}$ and $\widetilde{\Wfm}$ are again smooth submanifolds with the boundary $\partial \widetilde{\Wf^{+}}$ (resp. $\partial \widetilde{\Wf^{-}}$) contained in $\partial \widetilde{\QF}$.}
{\proof We just treat the case of $\widetilde{ \Wf^{+}}$ as the same proof holds for $\widetilde{ \Wf^{-}}$ by symmetry. First we will show that $\Wf^{+}\setminus \F$ is a submanifold of $\QF\setminus\F$. Note that:
	\begin{align*}
		\QF \rightarrow \mathcal{CP}(\partial^{+}_{\infty}M)\hookrightarrow Q_{0}(\partial^{+}_{\infty}M)
	\end{align*}
	gives us an embedding of $\QF$ into an open subset of $Q_{0}(\partial^{+}_{\infty}M)$ by Lemma \ref{graft}.
For a given $[c_{+}]$ we have an unique $\sigma^{\f}_{[c_{+}]}\in T^{*}\mathcal{T}(\partial^{+}_{\infty}M)$ realising $\f$ as the horizontal measured foliation at $\partial^{+}_{\infty}M$. This gives us the map $\sigma^{\f}:\mathcal{T}(\partial^{+}_{\infty}M) \rightarrow T^{*}\mathcal{T}(\partial^{+}_{\infty}M)$ which is identified with the map $q^{\f}$. Recall now that over the same complex structure $[c]$, $t\f$ is realised by $t^{2}\sigma^{\f}_{[c]}$. We see that $\Wf^{+}\setminus \F$ is locally embedded as $\mathbb{R}_{>0}\times \sigma^{\f}(\mathcal{T}(\partial^{\pm}_{\infty}M))$ in $\mathbb{R}^{12g-12}$ via the period coordinates of $\sigma^{\f}$. In other words, it is the image of the embedding:
\begin{align*}
	\mathbb{R}_{\geq 0} \times \mathcal{T}(\partial^{+}_{\infty}M)\rightarrow Q_{0}(\partial^{+}_{\infty}M)\hookrightarrow \mathbb{R}^{12g-12}
\end{align*} where the last inclusion is via the period coordinates associated to the dense stratum which gives us coordinate charts into $\mathbb{R}^{12g-12}$. Notice that the Fuchsian locus, corresponding to the zero section of $T^{*}\T$ has zero Schwarzian and thus the period associated is also zero. We also note that the smoothness of this submanifold is by virtue of the map $q^{\f}$ being real analytic when restricted to arational measured foliations. The dimension of the submanifolds being clearly $\text{dim}(\mathcal{T}(\partial^{\pm}_{\infty}M))+\text{dim}(\mathbb{R}_{>0})=\text{dim}(\T)+1$. \\
    Consider now the blow-up $\widetilde{Q_{0}(\partial^{+}_{\infty}M)}$ which is $(Q_{0}(S)\setminus\T)\cup Q^{1}_{0}(S)$ with the $C^{1}$ structure described in \S\S \ref{sectionblowup}. Since $(\overline{v_{[c],q^{\f}}})\in UN\F$ gets mapped to $\mathfrak{R}([q])\in UT\T$ under the isomorphism $UN\F\cong UT\T$ and $\mathfrak{R}(q)$ again corresponds to $[q]\in Q^{1}(S)$ by Weil-Petersson duality, we have an open embedding $\widetilde{\QF}\hookrightarrow \widetilde{Q_{0}(\partial^{+}_{\infty}M)}$. 
  	Recall that $[c]\in \F\subset \Wf^{+}$ is associated to the unit normal vector $\overline{v_{([c],q^{\f}_{[c]})}}$ in $UN_{[c]}\F\subset \partial\widetilde{ \Wf^{+}}$ since $v_{([c],q^{\f}_{[c]})}$ is realised by the path $\beta_{([c],q^{\f}_{[c]})}(t^{2})$, and $\sigma^{t}_{+}$ for this path is indeed $t^{2}q^{\f}_{[c]}$ at first order at $\F$ by Lemma \red{\ref{firstorderschwarz}} and Lemma \red{\ref{firtorderfoliations}}. So $\partial\widetilde{\Wf^{+}}$ is contained in $Q^{1}_{0}(S)$, the boundary of $\widetilde{Q_{0}(S)}$.  \\
	For modifying the argument for $\widetilde{ \Wf^{-}}$ we need to consider the vector $q^{-\f}_{[c]}$ at $[c]\in \F\subset\Wf^{-}$, since the foliation at negative end at infinity for the path $\beta_{([c],q)}(t^{2})$ is given by $\mathsf{hor}_{[c]}(-q)$ at first order at $\F$ by Proposition \red{\ref{fluckit}} and Lemma \red{\ref{firtorderfoliations}}. The rest of the argument follows as it is and we have our claim. \qed}\\

We can now claim the following:

{\proposition\label{transbound} When $(\fp,\fm)\in \mathcal{FMF}_{0}(\Sig)$, $\partial\widetilde{\Wfp}$ and $\partial\widetilde{\Wfm}$ intersect transversely in $\partial\widetilde{\QF}$. Moreover, their intersection is equal to $(\mathsf{P}(\fp,\fm),\overline{v_{([c_{t}],q_{t})}})\ni UN\F $ where $[c_{t}]\in\T$ is the unique critical point of the function  $\ext(\sqrt{t}\fp)+\ext(\frac{1}{\sqrt{t}}\fm): \T \rightarrow \mathbb{R}$ and $q_{t}\in Q(S,c_{t})$ is the unique holomorphic quadratic differential realising them.}
	
{\proof At $[c]\in \F$, $\partial\widetilde{\Wfp}$ is equal to $\overline{v_{([c],q^{\fp}_{[c]})}}\in UN_{[c]}\F$ and $\partial\widetilde{\Wfm}$ is given by the vector $\overline{v_{([c],q^{\fm}_{[c]})}}\in UN_{[c]}\F$. We know from \S \S \ref{sectionblowup} that $dB(J_{PSL_{2}(\mathbb{C})}\overline{v_{([c],q^{\fpm}_{[c]})}})\in UT_{[c]}\QF$ is given by $(\mathfrak{R}(\textit{\textbf{i}}[q^{\fpm}_{[c]}]),\mathfrak{R}(\textit{\textbf{i}}[q^{\fpm}_{[c]}]))\in UT_{[c]}\F\times UT_{[c]}\F$. So, $\partial\widetilde{\Wfp}$ intersects $\partial\widetilde{\Wfm}$ uniquely  at $[c]\in \F$ if and only if the sections $\mathfrak{R}([q^{\fp}])$ and $\mathfrak{R}([q^{\fm}])$ do in $UT\T$. Again, for some $[c]\in \T$, $\mathfrak{R}([q^{\fp}_{[c]}])=\mathfrak{R}([q^{\fm}_{[c]}])$ if and only if $[q^{\fp}_{[c]}]=[q^{\fm}_{[c]}]$ via the duality between $ T^{*}\T$ and $T\T$. So from Proposition \red{\ref{line}} we have that $[c]$ is the unique critical point $ \p(\sqrt{t}\fp,\frac{1}{\sqrt{t}}\fm)$ of the function $\ext(\sqrt{t}\fp)+\ext(\frac{1}{\sqrt{t}}\fm):\T\rightarrow \mathbb{R}$ for some $t>0$. From proposition \ref{line} we see that the boundaries $\partial \widetilde{ \Wfp}$ and $\partial\widetilde{ \Wfm}$ intersect along the image of the Teichm{\"u}ller geodesic line $t \mapsto \p(\sqrt{t}\fp,\frac{1}{\sqrt{t}}\fm)$ under the section map $\F\rightarrow UN\F$. The transversality of their intersection follows from that of the submanifolds $[q^{\fp}]$ and $[q^{-\fm}]$ shown in Lemma \ref{line}.
	 \qed}}\\\\
Define now the map 
\begin{align*}
	\pi : \Wfp \cap \Wfm \rightarrow \mathbb{R}\times\mathbb{R}
\end{align*}
which sends $g\in \Wfp \cap \Wfm$ to the pair $(a,b)$ such that
$\mathfrak{F}(g)=(a\fp,b\fm)$ by definition of $\mathfrak{F}$. Observe that under this map, $\F$ gets mapped to $\left\lbrace 0\right\rbrace :=(0,0)$ and for $(\fp,\fm)\in \mathcal{FMF}_{0}(S)$ the map $\pi$ is smooth. So, if $\widetilde{\mathbb{R}^{2}}$ be the blow-up of $\mathbb{R}^{2}$ at the origin, then $\pi$ lifts to a smooth map 
\begin{align*}
	\widetilde{\pi}:\widetilde{\Wfp}\cap\widetilde{\Wfm}\rightarrow \widetilde{\mathbb{R}^{2}}
\end{align*}
Here $\widetilde{\mathbb{R}^{2}}$ is the set $\mathbb{R}^{2}\setminus\left\lbrace 0\right\rbrace \cup UT_{\left\lbrace 0\right\rbrace }\mathbb{R}^{2}$ where $UT_{\left\lbrace 0\right\rbrace }\mathbb{R}^{2}$ is the quotient of the tangent space at origin under the action of $\mathbb{R}_{>0}$. 
So we have 
	{\proposition \label{localdiffeo} 
	For a pair $(\fp,\fm)\in \mathcal{FMF}_{0}(S)$, the map $\widetilde{\pi}$ is a local diffeomorphism near $\partial\widetilde{\Wfp}\cap \partial\widetilde{\Wfm}$ onto its image.}

{\proof 
 We want to show that the map $\widetilde{\pi}$ has a solution at the Fuchsian locus, is invertible at that point and subsequently apply implicit function theorem. For this we show that $\widetilde{\pi}$ is a local immersion and local submersion at $p\in\partial\widetilde{ \Wfp}\cap\partial\widetilde{\Wfm}$, i.e to prove that \begin{align*}
		d_{p}\widetilde{\pi}: T_{p}\widetilde{\Wfp} \cap T_{p}\widetilde{\Wfm} \rightarrow T_{\widetilde{\pi}(p)}\widetilde{\mathbb{R}^{2}}
	\end{align*} is injective and surjective. Note that when restricting to $(\fp,\fm)$ in arational pairs, this map is indeed smooth as the submanifolds $\widetilde{\mathcal{W}}^{\pm}_{\f_{\pm}}$ are for $\f\in \mathcal{MF}_{0}(S)$.\\
	If for some $v \in  T_{p}\widetilde{\Wfp} \cap T_{p}\widetilde{\Wfm} $ we have that $d_{p}\widetilde{\pi}(v)=\left\lbrace 0\right\rbrace $ then we want to show first that $v$ is in the intersection of the tangent spaces to the boundary $T_{p}\partial \widetilde{\Wfp}\cap T_{p}\partial \widetilde{\Wfm}$. Let $m_{+}:\Wfp\rightarrow [0,\infty)$ be the map such that 
   $m_{+}(g)= t$ for some $g\in \Wfp$ which has measured foliation at the boundary at infinity given by $t\fp$. This induces a map $\widetilde{m_{+}}: \widetilde{\Wfp}\rightarrow [0,\infty)$ in the blow-up space as well. Observe that if we analogously define a map $\widetilde{m_{-}}:\widetilde{\Wfm} \rightarrow [0,\infty)$ then $\widetilde{\pi}:=(\widetilde{m_{+}},\widetilde{m_{-}})$. So if $v\in \mathsf{Ker}(d\widetilde{\pi})$ then $v\in \mathsf{Ker}(d\widetilde{m_{+}})\cap\mathsf{Ker}(d\widetilde{m_{-}})$ then this implies that $v\in T_{p}\partial \widetilde{\Wfp} \cap T_{p} \partial\widetilde{\Wfm}$.

\begin{figure}
		\centering
		\includegraphics[width=0.7\linewidth]{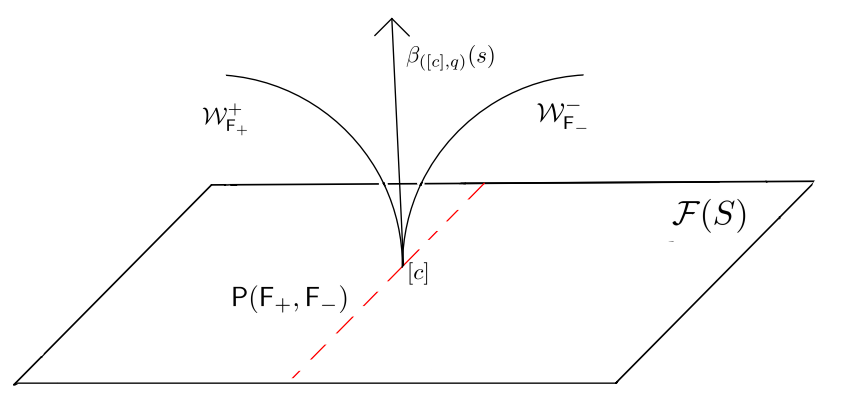}
		\caption{Schematic diagram of the path $\beta_{([c],q)}(s)$ leaving the Fuchsian locus $\F$ from the point $[c]$ along the direction of the normal vector $v_{([c],q)}$ with $\Wfp$ and $\Wfm$ intersecting at $\F\subset \QF$ prior to blow-up procedure for an arational pair $(\fp,\fm)$ which fills $S$ and the dashed line representing the Teichm{\"u}ller geodesic $\pp(\fp,\fm)$.}
		\label{fig:pathintersect}
	\end{figure}
	
Now let $\kappa: \widetilde{\mathbb{R}^{2}_{\geq 0}}\rightarrow \mathbb{R}_{>0}\times\mathbb{R}_{\geq 0}  $ be the chart which sends $(x,y) \mapsto (\frac{x}{y},y)$. Recall from Proposition \red{\ref{transbound}} that $ \partial\widetilde{\Wfp}\cap \partial\widetilde{\Wfm} $ has been shown to be the lift of the line $\mathsf{P}(\fp,\fm)$ in $\T\cong\F$ by the section $\F\rightarrow UN\F$ sending $\mathsf{P}(\fp,\fm) \ni [c]\mapsto ([c],\overline{v_{([c],q^{\fp}_{[c]})}})=([c],\overline{v_{([c],q^{-\fm}_{[c]})}}) \in UN_{[c]}\F$.\\ For a fixed $t>0$, let $[c]$ be the critical point of $\ext(\sqrt{t}\fp)+\ext(\frac{1}{\sqrt{t}}\fm)$ which is equivalent to being the critical point of $\ext(t\fp)+\ext(\fm)$. Let $s\mapsto \widetilde{g_{s}} \in \widetilde{\Wfp}\cap \widetilde{\Wfm}$ be a differentiable path such that $\widetilde{g_{0}}$ is the  $([c],\overline{v_{([c],q^{\fp}_{[c]})}})=([c],\overline{v_{([c],q^{-\fm}_{[c]})}})\in \partial\widetilde{ \Wfp}\cap \partial \widetilde{ \Wfm}$ and suppose $\widetilde{g_{s}}$ in turn descends to a curve $g_{s} \in \QF$ with $g_{0}=[c]$ under the projection $\widetilde{\QF} \rightarrow \QF$. As $\widetilde{g_{s}}\in\widetilde{\Wfp} \cap \widetilde{\Wfm}$ we have that $\pi(\mathfrak{F}(\widetilde{g_{s}}))=(\widetilde{a}(s)\fp,\widetilde{b}(s)\fm)$ which again descend to two smooth functions $1$-parameter functions $a(s),b(s)\in \mathbb{R}_{\geq 0}$ such that $\mathfrak{F}(g_{s})=(a(s)\fp,b(s)\fm)$ and $a(0)=b(0)=0$. Moreover by definition $g_{s}$ is normal to $\F$ at $g_{0}$ and along the direction ${v_{([c],q^{\fp}_{[c]})}}={v_{([c],q^{-\fm}_{[c]})}}$\\
	This brings us back to the case of Proposition \red{\ref{neccesary}} where we have a path starting from $\F$, normal to $\F$ and with specified first order behavior of the measured foliations at infinity given by a pair that fills $S$. Thus $g_{s}$ is a path of the type $\beta_{([c],q)}(t^{2})$ where $g_{0}=[c]=\mathsf{p}(a'(0)\fp,b'(0)\fm)$, the critical point for the function $\ext(a'(0)\fp)+\ext(b'(0)\fm)$. Again by assumption $[c]=\mathsf{p}(t\fp,\fm)$; so $\frac{a'(0)}{b'(0)}=t$, as $\mathsf{p}(\fp,\fm) $ is unique for a filling pair $(\fp,\fm)$ up to  scaling by $t$ (see Remark \red{\ref{scalediff}}). \\
	So we see that \begin{align*}
		\kappa \circ \widetilde{\pi}([c],\overline{{v_{([c],q^{\fp}_{[c]})}}})=\lim_{s \rightarrow 0}\kappa \circ \widetilde{\pi}(\tilde{g_{s}})=\lim_{s \rightarrow 0} \kappa \circ \pi(g_{s})=\lim_{s \rightarrow 0} \kappa \circ (a(s),b(s))=\lim_{s \rightarrow 0}(\frac{a(s)}{b(s)},b(s))=(t,0)
	\end{align*} \\
	This shows that if $v \in  T_{p}\widetilde{\Wfp} \cap T_{p}\widetilde{\Wfm}$ with $d_{p}\widetilde{\pi}(v)=0$ then $v$ is zero. Hence $d_{p}\widetilde{\pi}$ is injective at $\partial\widetilde{ \Wfp}\cap \partial\widetilde{ \Wfm}$.\\
	So the map $\widetilde{\pi}$ is a local immersion into $\widetilde{\mathbb{R}^{2}}$ at the points $p \in \partial\widetilde{\Wfp}\cap \partial\widetilde{\Wfm} $. Also $d_{p}\widetilde{\pi}$ is surjective at ${\partial\widetilde{\Wfp}\cap \partial\widetilde{\Wfm} }$ because the domain is a $2$ dimensional real manifold being the boundary of $\widetilde{ \Wf^{+}}\cap\widetilde{ \Wf^{-}}$ and the image is the boundary of the $2$ dimensional real manifold $\widetilde{\mathbb{R}^{2}}$ being $UT_{\left\lbrace 0\right\rbrace }\mathbb{R}^{2}$. \\
So we proved that $\widetilde{\pi}$ is a local diffemorphism in a neighbourhood of $\partial \widetilde{ \Wfp}\cap\partial{\widetilde \Wfm}$. \qed   }\\

We can now address the main proposition of this section which proves Theorem \ref{thm1.1}: 
{\proposition\label{pathexist}
	Let $(\fp,\fm)$ be a pair of arational measured foliations that fill $S$ and let $\mathsf{p}(\fp,\fm)\in \F$ be the critical point of the function $\ext(\fp)+\ext(\fm)$. Then for $ t\in [0,\epsilon) $ there exists a unique smooth curve $t \mapsto g_{t}\in \QF$, with $g_{0}=\mathsf{p}(\fp,\fm)$, such that the $\mathfrak{F}(g_{t})=(t\fp,t\fm)$ for all $t\in[0,\epsilon)$.   
}
{\proof By the preceding Proposition there exists a $ t\rightarrow\widetilde{g_{t}} \in \widetilde{\Wfp}\cap\widetilde{\Wfm}$ smooth curve in an open neighbourhood of $\partial\widetilde{ \Wfp}\cap\partial\widetilde{ \Wfm}$ such $b \circ \widetilde{\pi} \circ \mathfrak{F}(\widetilde{g_{t}})=(t,t)$ for $t\in (0,\epsilon)$ with $b$ being the blow-up map $b:\widetilde{\mathbb{R}^{2}} \rightarrow \mathbb{R}^{2}$ mapping $UT_{\left\lbrace 0\right\rbrace }(\mathbb{R}^{2})$ to the origin and identity on the rest. The result then follows as $\widetilde{g_{t}}$ descends to $g_{t} \in \QF$ with $\mathfrak{F}(g_{t})=(t\fp,t\fm)$ for $t\in[0,\epsilon)$. \qed   }\\ 
.

\section{Interpretation in Half-Pipe Geometry}\label{halfpipe}

	We will now give an interpretation of our result in quasi-Fuchsian {Half pipe} $3$ manifolds that we describe following\red{\cite{Danciger2013}}. To describe the space $\mathbb{HP}^{3}$, we will switch our viewpoint to the projective model for $\mathbb{H}^{3}$ in this section. Consider $\mathbb{RP}^{3}\subset \mathbb{R}^{4}$ with the group $PGL_{4}(\mathbb{R})$ being its isometry group.
	Consider now $\mathbb{H}^{3}$ as a subset of $\mathbb{RP}^{3}$. To be precise, consider $\mathbb{R}^{4}$ with the diagonal form given by the matrix 
	\begin{align*}
		\eta_{t}=\begin{bmatrix}
			-1&  &  & \\ 
			&  1&  & \\ 
			&  &  1& \\ 
			&  &  & t^{2}
		\end{bmatrix}
	\end{align*}	
	where $t \geq 0$. Each form $\eta_{t}$ define a convex region in $\mathbb{X}_{t}\subset \mathbb{RP}^{3}$ given by the relation 
	\begin{align*}
		x^{T}\eta_{t}x= -x_{1}^{2}+x_{2}^{2}+x_{3}^{2}+t^{2}x_{4}^{2}<0
	\end{align*}
	For each $t,$ $\mathbb{X}_{t}$ is a homogeneous subspace of $\mathbb{RP}^{3}$ which is preserved by the group $G_{t}$ of linear transformations that preserve $\eta_{t}$. With these notations, $\mathbb{H}^{3}=\mathbb{X}_{+1}$ and $G_{+1}=PO(3,1)\cong PSL_{2}(\mathbb{C})$. \\\\
	Moreover, define $\g_{t}: \mathbb{X}_{+1}\rightarrow \mathbb{X}_{t}$ as 
	\begin{align*}
	 \begin{bmatrix}
			1&  &  & \\ 
			&  1&  & \\ 
			&  &  1& \\ 
			&  &  & t^{-1}
		\end{bmatrix}
	\end{align*}
	and this gives an isomorphism between $\mathbb{X}_{+1}$ and $ \mathbb{X}_{t}$ . Moreover $\g_{t}$ conjugates $PO(3,1)$ to $G_{t}$. 
	Notice further the co-dimension $1$ space $\mathbb{P}^{3}$ defined by $x_{4}=0$ and $-x_{1}^{2}+x_{2}^{2}+x_{3}^{2}<0$ is a totally geodesic copy of $\mathbb{H}^{2}$ and is contained in $\mathbb{X}_{t}$ for all $t$ and $g_{t}$ fixes $\mathbb{P}^{3}$ pointwise. \\\hspace*{.5 cm}
	For $t>0$ we now consider a $1$-parameter family of quasi-Fuchsian structures on $M\cong \Sig \times \mathbb{R}$. So, we have a family of developing maps and holonomy representations given by:
	\begin{align*}
		\mathcal{D}_{t}: \widetilde{\Sig} \rightarrow \mathbb{H}^{3}\cong X_{+1} \\
		\rho_{t}: \pi_{1}(\Sig)\rightarrow PO(3,1) \cong PSL_{2}(\mathbb{C})
	\end{align*}  
	Assume further that for $t=0$, $\mathcal{D}_{0}$ gives us a submersion of $\widetilde{\Sig}$ onto $\mathbb{P}^{3}=\mathbb{H}^{2}$. That is the coordinate $x_{4}$ converges to a zero function. $\rho_{t}$ then converges to $\rho_{0}$ whose image lies in the subgroup $PO(2,1)\cong SL(2,\mathbb{R}). $ \\\hspace*{.5 cm}
		Apply now the rescaling map to obtain the developing map $g_{t}\mathcal{D}_{t}:\widetilde{\Sig}\rightarrow\mathbb{X}_{t}$, so that the holonomy representation is given by $g_{t}\rho_{t}g_{t}^{-1}$. Suppose that $t \rightarrow 0$ then $g_{t}\mathcal{D}_{t}$ converges to a local diffeomorphism $\mathcal{D}:\widetilde{\Sig}\rightarrow \mathbb{X}_{0}$ and if $\rho_{\mathcal{D}}:\pi_{1}(\Sig)\rightarrow PGL_{4}(\mathbb{R})$ is the limit of the holonomy $\rho_{t}$ as $t \rightarrow 0$ then $\mathcal{D}$ is equivariant with respect to $\rho_{\mathcal{D}}$ . To be precise, for $\gamma \in \pi_{1}(\Sig)$ if $\rho_{t}$ is of the form: 
	
	\begin{align}\rho_{t}=\begin{pmatrix}
			A(t) &w(t) \\ 
			v(t)& a(t)
	\end{pmatrix}\end{align}
	where $A \in PO(2,1)\cong PSL(2,\mathbb{R})$ and $w(t),v(t)^{T} \in \mathbb{R}^{3} $, then we have
	\begin{align}\label{halfpipething}
		\lim_{t \to 0}g_{t}\rho_{t}(\gamma)g_{t}^{-1}=\lim_{t \to 0} \begin{pmatrix}
			A(t) &tw(t) \\ 
			v(t)/t& a(t)
		\end{pmatrix}=\begin{pmatrix}
			A(0) &0 \\ 
			v'(0)& 1
		\end{pmatrix}=\rho_{\mathcal{D}}
	\end{align}
	So we have the following: 
	
	{\definition\label{quasifuchsianhalfpipe} A half-pipe structure on $\Sig\times \mathbb{R}$ is a $(G_{HP^{3}},\mathbb{HP}^{3})$ structure where $\mathbb{HP}^{3}=\mathbb{X}_{0}$ and $G_{HP}$ is the subgroup of $PGL_{4}(\mathbb{R})$ of matrices with the form
		$\begin{pmatrix}
			A &0 \\ 
			v& \pm 1
		\end{pmatrix}$
		where $A \in O(2,1)$ and $v^{T}\in \mathbb{R}^{3}$.}\\
	We also define:
	 {\definition\label{mcqhp} Any path $\rho_{t}$ of representations into $PSL_{2}(\mathbb{C})$ satisfying Equation (\ref{halfpipething}) is said to be compatible at first order at $t=0$ with $\rho_{\mathcal{D}}$}.
	 \\ As observed in\red{\cite{Danciger2013}}, $G_{HP^{3}}\cong \mathbb{R}^{2,1} \rtimes O(2,1) $, where an element of the form $\begin{pmatrix}
		A &0 \\ 
		v& \pm 1
	\end{pmatrix}$ can be interpreted as an infinitesimal deformations of the the hyperbolic structure given by $A\in PO(2,1)$ and along the direction $v$ normal to $PO(2,1)$ into $PO(3,1)$. Passing onto quotients, we see that quasi-Fuchsian half-pipe $3$-manifolds are precisely obtained by infinitesimal deformations in $\QF$ starting from the point $[c]\in \F$ along a direction $v_{([c],q)} \in N_{[c]}\mathcal{F}(\Sig)$. So we define:
			{\definition\label{hpqf} $M^{HP}_{c,q}$ is the half-pipe quasi-Fuchsian structures whose holonomy representation into $PGL_{4}(\mathbb{R})$ is compatible at first order at $t=0$ with the holonomy $\rho_{t}$ associated to quasi-Fuchsian metrics in $\beta_{([c],q)}(t)$ in the sense of Definition \red{\ref{mcqhp}}.}
				\subsection{Half-pipe Schwarzians and their measured  foliations}\label{hpschwarzandfoli}\hfill\break
		Recall again that $v_{([c],q)}$ is the tangent vector to the path $\beta_{([c],q)}(t)\in \QF$ associated to which we have unique minimal immersions of $\Sig$ for each $t<\epsilon$ with immersion data $I_{t}\in [c]$ and $\II_{t}=t\mathfrak{R}(q)$. So for each $t$ we have a $\mathcal{D}_{t}$ and $\rho_{t}$ in the sense above and a half-pipe structure as the limit when $t$ goes to $0$. There is also an analogous notion for half-pipe for the second fundamental form and shape operator in half-pipe geometry that follows from \cite{Fillastre2019}. So we want to study the limit of these immersion data as $t\rightarrow 0$ and use the following lemma: 
	{\lemma [\red{\cite{Fillastre2019}}]Let $\sigma_{t}$ be a $C^{2}$ family of minimal immersions of $\mathbb{H}^{2}$ into $\mathbb{H}^{3}$, such that $\sigma_{0}$ is an embedding of $\mathbb{H}^{2}$. Let
		$\sigma = \lim_{t \to 0}G_{t}\circ\sigma_{t}$
		be the rescaled immersion in $\mathbb{HP}^{3}$. Then:
		\begin{itemize}
			\item  The first fundamental form of $\sigma$ coincides with the first fundamental form of $\sigma_{0}$:
			\begin{align*}
				I(v,w)=\lim_{t \to 0}I_{t}(v,w)
			\end{align*}
			\item The second fundamental form of $\sigma$ is the first derivative of the second fundamental form of $\sigma_{t}$:
			\begin{align*}
				\II(v,w)=\lim_{t \to 0}\frac{\II_{t}(v,w)}{t}
			\end{align*}
			\item The shape operator $B$ of $\sigma$ is the first derivative of the shape operator $B_{t}$ of $\sigma_{t}$:
			\begin{align*}
				B(v)= \lim_{t \to 0}\frac{B_{t}}{t}
			\end{align*}
	\end{itemize} }\hfill\break
	 We immediately have the following for $M^{HP}_{c,q}$. 
	
	{\proposition\label{8,7}  The half-pipe manifold $M^{HP}_{c,q}$ contains a smooth minimal surface with immersion data uniquely given by $I\in[c]$ and $\II=\mathfrak{R}(q)$.  }
	{\proof Since $M^{HP}_{c,q}$ appears as a limit of the quasi-Fuchsian structures defines by the path $\beta_{([c],q)}(t)$ we apply the lemma directly. The induced metric of the minimal immersion of $\Sig$ into $M^{HP}_{c,q}$ is given by $I= \lim_{t \to 0}I_{t}\in [c]$. So we see that $\II=\lim_{t \to 0}\frac{\II_{t}}{t}=\mathfrak{R}(q)$.}\\\\
		So we will introduce an analogous notion for Schwarzian at infinity for half-pipe manifolds that is quite natural with the tools we have developed so far and with our definition of $M^{HP}_{cq}$.
	
	{\definition\label{definitionofhpsch}  The positive (resp. negative) half pipe Schwarzian at infinity associated to $M^{HP}_{cq}$ is defined as the derivative at $\F$ of the Schwarzian derivatives at the positive (resp. negative) end at infinity for quasi-Fuchsian metrics in the path $\beta_{([c],q)}(t)$ for $t$ small enough.}\\\\\
	From Lemma \red{\ref{firstorderschwarz}} So we have 
	{\proposition The positive and negative half-pipe Schwarzians at infinity for $M^{HP}_{c,q}$ are $q$ and $-q$.  }\\\\
		We can now again consider the horizontal measured foliation $\pm \f$ associated to $\pm q$ on $[c]$ and obtain our Theorem \ref{thm1.3} by an application of Theorem \red{\ref{GM}}:
	
	{\theorem\label{hpschth} Any pair $(\fp,\fm)\in \FMF$ can be uniquely realised as the horizontal foliations of the positive and negative half pipe Schwarzians at infinity associated to quasi-Fuchsian half-pipe manifold. Moreover, $M^{HP}_{cq}$ defined before is the unique one realising $(\fp,\fm)$, where $([c],q)\in T^{*}\T$ is the unique point realising $(\fp,\fm)$. }\\\\
	This can be seen as a {first-order interpretation of Theorem of Gardiner-Masur}  as half-pipe quasi-Fuchsian manifolds correspond to points in $T^{*}\T$ by Proposition \red{\ref{8,7}} via the minimal surface they contain. \\

\chapter{Quasi-Fuchsian manifolds close to the Fuchsian locus are foliated by CMC surfaces:}
One of the main idea of this work is to consider Epstein surfaces inside quasi-Fuchsian manifolds with respect to the data at the boundary at infinity. First we set the convention of the normal to an immersed surface in a quasi-Fuchsian manifold.
{\remark{\label{signconv}} When $S$ is an embedded surface in $\mathbb{H}^{3}/\Gamma$ homotopic to$S\times \left\lbrace \star \right\rbrace $
	we will refer to the (unit) normal vector to $S$ as the one chosen according to the following
	convention. We lift $S$ to a surface $\widetilde{S}$ in the universal cover $\mathbb{H}^{3}$
	, whose asymptotic boundary
	is the limit set $\Lambda_{\Gamma}$. Then $\widetilde{S}$ disconnects $\mathbb{H}^{3}$
	in two components. We declare that the unit
	normal vector to $S$ lifts to the unit normal vector to $\widetilde{S}$ pointing towards the component
	whose closure contains $\Omega_{+}$, the positive end of the domain of discontinuity.} \section{Epstein surfaces}
In this subsection we describe a construction due to Epstein in \cite{epstein}, which naturally associates to certain conformal metrics on a domain of $\mathbb{C}P^{1}\cong \partial_{\infty}\mathbb{H}^{3}$ an immersion into $\mathbb{H}^{3}$, that we will call the Epstein surface. 

\subsection{The Epstein map} Given any point $p\in \mathbb{H}^{3}$, we define a map  $G_p:T^1_p\mathbb{H}^3\to\mathbb{C}P^{1}$, by sending $(x,v)$ to the endpoint at infinity of the unique geodesic of $\Hyp^3$ starting at $x$ with tangent vector $v$. Then we define the \emph{visual metric} $V_p$ as the metric obtained by pushforward via $G_p$ of the canonical spherical metric of $T^1_p\Hyp^3$. One can easily check that the metric $V_p$ is conformal, namely compatible with the Riemann surface structure of $\mathbb{C}P^{1}$. Indeed, if $o$ is the origin in the unit ball model, then $V_o$ is just the usual spherical metric on the unit sphere. For the general case, if $M$ is an isometry of $\Hyp^3$ sending $o$ to $p$, then $V_p=M_*V_o$ and is therefore in the same conformal class, since $M$ extends to a biholomorphism of $\mathbb{C}P^{1}$.

The fundamental result is the following:

{\proposition[\cite{epstein,Dumas2017}]\label{prop:eps map}
	Let $\Omega$ be a connected open domain in $\mathbb{C}P^{1}$ and let $\varphi:\Omega\to \mathbb{C}P^{1}$ be a locally injective holomorphic map. If $\sigma$ is a $C^1$ conformal metric on $\Omega$, then there exists a unique continuous map $\Eps_{(\varphi,\sigma)}:\Omega\to\Hyp^3$ such that
	$$(\varphi^*V_{\Eps_{(\varphi,\sigma)}(z)})(z)=\sigma(z)$$
	for all $z\in \Omega$.  Moreover, if $\sigma$ is $C^k$, then $\Eps_{(\varphi,\sigma)}$ is $C^{k-1}$.
}



We remark that $\Eps_{(\varphi,\sigma)}$ is in general not an immersion. As an example, if $\sigma$ is the standard spherical metric on the unit sphere, then the associated Epstein map is constantly equal to the origin $o$ in the unit ball model.

In \cite[Section 3]{Dumas2017} Dumas introduced an explicit formula for $\Eps_{(\varphi,\sigma)}$ in the upper half-space model of $\Hyp^3$, which will be useful for our purposes. Let $p$ be the point in the geodesic joining $0$ and $\infty$ in the upper half-space model such that the visual metric $V_p$ at $0$ equals $|dz|^2$. Concretely, $p=(0,0,2)$. If we write the conformal metric as $\sigma=e^{2\eta}|dz|^2$, and  to simplify the notation we let $\Omega$ be a connected open subset of $\C$ so as to take $\varphi=\mathrm{id}$, then the expression for $\Eps_{(\mathrm{id},\sigma)}:D\to\Hyp^3$ is the following:

\begin{equation}\label{eq:SLframe}
	\Eps_{(\mathrm{id},\sigma)}(z)=\begin{pmatrix}
		1 & z \\
		0& 1 \\
	\end{pmatrix}\begin{pmatrix}
		1 & 0 \\
		\eta_{z}& 1 \\
	\end{pmatrix}\begin{pmatrix}
		e^{-\frac{\eta}{2}} & 0 \\
		0& e^{\frac{\eta}{2}} \\
	\end{pmatrix}\cdot p \end{equation}

\subsection{Schwarzian tensors}
The last fundamental preliminary step that we will need in our paper is an expression for the mean curvature of Epstein maps. For this purpose, we first need to introduce the notion of Schwarzian tensor, due to Osgood and Stowe \cite{zbMATH00078553}. Given two conformal metrics $\sigma_{1}=e^{2\eta_{1}}|dz|^{2}$ and $\sigma_{2}=e^{2\eta_{2}}|dz|^{2}$ on a domain $\Omega \subset \mathbb{C}P^{1}$,  the \emph{Schwarzian tensor} of $\sigma_{1}$ with respect to $\sigma_{2}$  is the  quadratic differential (which is not necessarily holomorphic, in general) defined as  
\begin{equation}\label{eq:schw tensor}
	B(\sigma_{1},\sigma_{2})= ((\eta_{2})_{zz}-{(\eta_{2})_{z}}^{2}- (\eta_{1})_{zz}+{(\eta_{1})_{z}}^{2})dz^{2}
\end{equation}
This definition generalizes the classical Schwarzian derivative, in the sense that, if $f:\Omega\to \C$ is a locally injective holomorphic function, then
\begin{equation}\label{eq:schw tensor and schwarzian}
	S(f) =2B(|dz|^{2},f^{*}|dz|^{2})~.
\end{equation}  
Clearly $B(\sigma_{2},\sigma_{1})=-B(\sigma_{1},\sigma_{2})$. Similarly to the Schwarzian derivative, the Schwarzian tensor has a number of naturality  properties. For any metrics $\sigma_{1},\sigma_{2},\sigma_{3}$ on $\Omega\subset \mathbb{C}P^{1}$, 
\begin{itemize}
	\item Given a locally injective holomorphic map $f$, 
	\begin{equation} \label{eq:swt1} f^{*}B(\sigma_{1},\sigma_{2})=B(f^{*}\sigma_{1},f^{*}\sigma_{2})~.\end{equation}
	\item The cocycle property holds: 
	\begin{equation} \label{eq:swt2}B(\sigma_{1},\sigma_{3})=B(\sigma_{1},\sigma_{2})+B(\sigma_{2},\sigma_{3})~.\end{equation}
\end{itemize}
In particular, \eqref{eq:swt1} implies that if $\sigma_1$ and $\sigma_2$ are invariant by an automorphism of $\Omega$, then so is the quadratic differential $B(\sigma_{1},\sigma_{2})$. If a group $\Gamma$ acts on $\Omega$ by biholomorphisms with $\Omega/\Gamma\cong\Sigma$, thus inducing in quotient surface $\Sigma$ a Riemann surface structure, and $\sigma_1,\sigma_2$ are $\Gamma$-invariant conformal metrics, then $B(\sigma_{1},\sigma_{2})$ induces a well-defined quadratic differential in the quotient.

\subsubsection{M{\"o}bius flat metrics}\label{subsec:mob flat}
A conformal metric $\sigma$ is said to be \emph{M{\"o}bius flat} if $B(\sigma,|dz|^{2})=0$. From \eqref{eq:schw tensor and schwarzian}, for example, when $f$ is itself a M{\"o}bius transformation, then the pull-back metric $f^{*}|dz|^{2}$ is always M{\"o}bius flat. This is not the only case. Indeed, one can show that 
$B(\sigma,|dz|^2)=0$ if and only if $\sigma$ is the pull-back by a M\"obius transformation of one of the following metrics:
\begin{itemize}
	\item the flat metric $|dz|^2$ on $\C$,
	\item a positive multiple of the Poincar\'e metric on $\D$,
	\item a positive multiple of the spherical metric on $\CP^{1}$.
\end{itemize}

Now, given a metric $\sigma$, we will denote by 
$$B(\sigma)=B(g_{\CP^1},\sigma)$$ the Schwarzian tensor of $\sigma$ with respect to a M{\"o}bius flat metric $g_{\CP^{1}}$. By the definition of  M{\"o}bius flat and the cocycle property \eqref{eq:swt2}, $B(\sigma)$ is independent of the chosen  M{\"o}bius flat metric $g_{\CP^{1}}$. Hence if $f$ is a M\"obius transformation, then 
\begin{equation}\label{eq invariance B}
	B(f^*\sigma)=f^*B(\sigma)
\end{equation}
by \eqref{eq:swt1}. As another consequence of the independence of the definition of $B(\sigma)$ from the choice of $g_{\CP^{1}}$, together with the definition \eqref{eq:schw tensor} applied to $B(e^{2t}\sigma)=B(|dz|^2,e^{2t}\sigma)$, we have that if $e^{2t}$ is any positive constant then 
\begin{equation}\label{eq:scale invariance}
	B(e^{2t}\sigma)=B(\sigma)~.
\end{equation}

Finally, given a quadratic differential $\phi=\lambda(z)dz^2$ and a conformal metric $\sigma=e^{2\eta}|dz|^2$, we define the norm of $\phi$ with respect to $\sigma$ as:
$$\|\phi\|_\sigma(z):=e^{-2\eta(z)}|\lambda(z)|~.$$
Since both $|\phi|$ and $\sigma$ follow the same transformation rule under a biholomorphic change of coordinates, $\|\phi\|_\sigma$ is as well-defined function, meaning that if $f$ is a locally injective holomorphic function, then 
\begin{equation}\label{eq invariance norm}
	\|f^*\phi\|_{f^*\sigma}=\|\phi\|_\sigma\circ f~.
\end{equation}
In particular, if $\sigma=e^{2u}h_0$ is a conformal metric on  $(\Sigma,h)$ and $\phi$ is a quadratic differential on  $(\Sigma,h)$, then $\|\phi\|_\sigma$ is a function on $\Sigma$. From \eqref{eq:scale invariance}, we also obtain:
\begin{equation}\label{eq:scale invariance2}
	\|\phi\|_{e^{2t}\sigma}=e^{-2t}\|\phi\|_\sigma~,
\end{equation}
for any constant $t\in\R$.

\subsection{Mean curvature}\label{subsec:mean}
We are now ready to provide the formula for the mean curvature of  Epstein maps. Let $\sigma$ be a $C^2$ conformal metric on an open set $\Omega$. To simplify the notation, we first suppose $\varphi=\mathrm{id}$. Assume moreover that $\Eps_{(\mathrm{id},\sigma)}$ is an immersion. In this case, it turns out that $\Eps_{(\mathrm{id},\sigma)}$ at $z$ is tangent to the unique horosphere through $\Eps_{(\mathrm{id},\sigma)}$ with point at infinity $z$. Then, the mean curvature of $\Eps_{(\mathrm{id},\sigma)}$ equals the function

\begin{equation}\label{meancurveq}
	\mathcal H(\Eps_{(\mathrm{id},\sigma)}) = \frac{K(\sigma)^{2}-1-16\|B(\sigma)\|_\sigma^2}{(K(\sigma)-1)^{2}-16\|B(\sigma)\|_\sigma^2}~,
\end{equation}
where $K(\sigma)$ denotes the curvature of $\sigma$. See \cite[Equations 3.2, 3.3]{Dumas} and \cite[Lemma 3.4]{quinn}.
Here the mean curvature is defined as one half the trace of the second fundamental form with respect to the first fundamental form. It is computed with respect to the unit normal vector pointing towards $\Omega$. We will then apply the formula \eqref{meancurveq} when the Epstein map induces an embedded surface in $\Hyp^3/\Gamma$ for $\Gamma$ a quasi-Fuchsian group, and for $\Omega=\Omega^+$. Hence the convention of the mean curvature here is consistent with Remark \ref{signconv}.

To write the general formula for $\Eps_{(\varphi,\sigma)}$, since the computation is local, we may restrict to an open subset $\Omega$ on which $\varphi$ is a biholomorphism onto its image. Let $\sigma$ be a metric on $\Omega$ and $\hat\sigma$ be such that $\varphi^*\hat\sigma=\sigma$. Then we observe that $K(\hat\sigma)\circ \varphi=K(\sigma)$, whereas by \eqref{eq:schw tensor and schwarzian}, \eqref{eq:swt1} and \eqref{eq:swt2},
$$\varphi^*B(\hat\sigma)=B(\varphi^*g_{\CP^1},\sigma)=B(\varphi^*g_{\CP^1},g_{\CP^1})+B(g_{\CP^1},\sigma)=B(\sigma)-\frac{1}{2}S(\varphi)~.$$
Hence we can deduce the expression:
\begin{equation}\label{meancurveq2}
	\mathcal H(\Eps_{(\varphi,\sigma)}) = \frac{K(\sigma)^{2}-1-16\|B(\sigma)-S(\varphi)/2\|^2_\sigma}{(K(\sigma)-1)^{2}-16\|B(\sigma)-S(\varphi)/2\|^2_\sigma}
\end{equation}

\subsection{A technical point}
The rough idea to prove the existence of CMC surfaces using the implicit function theorem is the following. Consider quasi-Fuchsian manifolds $\Hyp^3/\Gamma$, where $\Omega^\pm$ are the connected components of the complement of the limit set $\Lambda_\Gamma$. We would like to write the solutions of the CMC condition $\mathcal H=c$, for $c\in (-1,1)$, as the level sets of a function $G$ which depends on the hyperbolic metric $h$ on $S$ in the conformal class of $\Omega^+/\Gamma$ (that is, it represents the first Bers parameter $h^+$ of $M$), on a holomorphic quadratic differential $\phi$ on $(S,h)$ which is (the quotient of) the Schwarzian derivative of the conformal isomorphism between $\D$ and $\Omega_+$, and finally on the conformal factor of a metric of the form $e^{2u}h$ on $S$. This last function $u$ is  an element of the infinite-dimensional functional space $C^\infty(S,\R)$. A priori the pair $(h,\phi)$ varies in an infinite-dimensional space as well, since $h$ varies in the space $\mathcal M_{-1}(S)$ of hyperbolic metrics. Although this is not really necessary, it will be convenient to use the action of $\mathrm{Diff}_0(S)$  to reduce ourselves to representatives of pairs $(h,\phi)$, now varying in the finite-dimensional space $\Q(S)$. The following lemma will serve to formalize this approach.

\begin{lemma}\label{lem:section}
	Let $\pi:\mathcal M_{-1}(S)\to\T$ be the quotient map by the action of $\mathrm{Diff}_0(S)$ on $\mathcal M_{-1}(S)$. There exists a smooth section $s:\T\to \mathcal M_{-1}(S)$ of $\pi$.
\end{lemma}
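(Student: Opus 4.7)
The plan is to select a canonical representative in each $\mathrm{Diff}_0(S)$-orbit via harmonic diffeomorphisms. I would fix once and for all a reference hyperbolic metric $h_0 \in \mathcal M_{-1}(S)$. For every $h \in \mathcal M_{-1}(S)$, the Eells--Sampson existence theorem combined with Hartman's uniqueness result (applicable because the target is negatively curved) furnishes a unique harmonic map $u_h : (S, h_0) \to (S, h)$ homotopic to the identity; by Sampson's theorem on harmonic maps between hyperbolic surfaces, $u_h$ is a diffeomorphism, and being isotopic to the identity it lies in $\mathrm{Diff}_0(S)$. The candidate section is then $s([h]) := u_h^{*} h$.

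The first point to verify is that this is well-defined on $\T$. If $h' = \phi^{*} h$ for some $\phi \in \mathrm{Diff}_0(S)$, then $\phi^{-1} : (S,h) \to (S,h')$ is an isometry, hence $\phi^{-1} \circ u_h$ is a harmonic diffeomorphism from $(S, h_0)$ to $(S, h')$ isotopic to the identity. Uniqueness gives $u_{h'} = \phi^{-1} \circ u_h$, whence
\[
u_{h'}^{*} h' \;=\; u_h^{*} (\phi^{-1})^{*} \phi^{*} h \;=\; u_h^{*} h.
\]
Moreover, $u_h^{*} h$ is hyperbolic and sits in the $\mathrm{Diff}_0(S)$-orbit of $h$, so $\pi \circ s$ is the identity of $\T$, and $s$ indeed takes values in $\mathcal M_{-1}(S)$.

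Smoothness reduces to showing that $h \mapsto u_h$ depends smoothly on $h$, which I would establish by applying the implicit function theorem to the tension field equation $\tau_{h_0,h}(u) = 0$ in Banach completions such as $C^{k,\alpha}$. The linearization at $u_h$ is the Jacobi operator, a self-adjoint elliptic operator that is invertible thanks to the negative curvature of the target together with the absence of non-trivial Jacobi fields for a harmonic diffeomorphism between closed hyperbolic surfaces; elliptic regularity then upgrades the smoothness to the $C^{\infty}$ category. To transfer smoothness from $h$ to $[h]$, I would use a local smooth lift of $\pi$ (provided by the Ebin--Palais slice theorem, or more concretely by a Fenchel--Nielsen gluing construction) and observe that $s$ is the composition of such a local lift with the $\mathrm{Diff}_0(S)$-invariant smooth map $h \mapsto u_h^{*} h$; the well-definedness of $s$ guarantees that the locally constructed smooth sections patch together into a global one.

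The main technical point is the implicit function theorem argument for the smooth dependence of $u_h$ on $h$, but this is an entirely standard package in the analysis of harmonic maps. The geometric heart of the proof is the harmonic-map selection, which produces a canonical $\mathrm{Diff}_0(S)$-equivariant representative in each Teichm\"uller class.
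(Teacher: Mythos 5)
Your proposal is correct and follows essentially the same route as the paper: both select, in each Teichm\"uller class, the unique hyperbolic metric for which the identity map from the fixed base $(S,h_0)$ is harmonic, relying on the existence, uniqueness and diffeomorphism properties of harmonic maps between hyperbolic surfaces as in Wolf's theory. The only real difference is how smoothness is packaged --- the paper routes it through Wolf's parametrization of $\T$ by holomorphic quadratic differentials $q\mapsto h_q$, whereas you argue smooth dependence of the harmonic map on the target metric via the implicit function theorem for the tension field (invertible Jacobi operator) and then descend to $\T$ using a local slice of $\pi$.
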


We remark that the section $s$ that we are looking for is not ``canonical" in any manner. There are actually several ways to achieve this; we will sketch one relying on the theory of harmonic maps of hyperbolic surfaces, see \cite{Wolf1989}.

\begin{proof}[Sketch of proof of Lemma \ref{lem:section}]
	Fix a hyperbolic metric $h_0$ on $S$, and consider the vector space $H^0((S,h_0),K^2)$ of holomorphic quadratic differentials on $(S,h_0)$. Then for every $q\in H^0((S,h_0),K^2)$ there exists a unique hyperbolic metric $h_q$ such that $\mathrm{id}:(S,h_0)\to (S,h_q)$ is harmonic, with $h_q$ depending smoothly on $q$. The correspondence $q\mapsto h_q$ therefore gives a map $H^0((S,h_0),K^2) \rightarrow \mathcal M_{-1}(S)$ that, when post-composed with $\pi$, provides a homeomorphism from $H^0((S,h_0),K^2)$ to $\T$. This proves the existence of the desired section.
\end{proof}

\begin{remark}\label{rmk:wolf}
	Wolf's approach via harmonic maps actually led to the construction of a global parameterization of $\T$ by means of the space $H^0((S,h_0),K^2)$, once the metric $h_0$ is fixed. This allows us to identify the space $\Q(S)$ with a very concrete finite-dimensional manifold of real dimension $12g-12$, namely the total space of the smooth vector bundle $\mathcal E$ over $H^0((S,h_0),K^2)$ whose fiber over a quadratic differential $q$ is equal to $H^0((S,h_q),K^2)$, the space of holomorphic quadratic differentials of the hyperbolic surface $(S,h_q)$. In rest of our exposition we will identify with abuse any pair $(h,\phi)$ with its corresponding point in the total space of $\mathcal E$. (Notice that the identification with $\Q(S)$ heavily depends on the choice of the section $s$ from Lemma \ref{lem:section}.)
\end{remark}

	\section{Existence of CMC surfaces}
	
	The purpose of this section is to prove two existence results for CMC surfaces, morally one (Theorem \ref{thm:foliation_ends}) ``in the ends" and the other (Theorem \ref{thm:existence_compact}) ``in the compact part". Then in Theorem \ref{thm:existence} we combine them to obtain the existence of CMC surfaces for $h\in(-1,1)$ for quasi-Fuchsian manifolds close to the Fuchsian locus, which is for the moment  weaker than our main result, Theorem \ref{thm:foliation}.
	
	\subsection{Existence in the ends}\label{sec:foliation_ends}
	
	It has been proved in \cite{MP} that the ends of every quasi-Fuchsian manifold are monotonically foliated by CMC surfaces; another proof has been provided recently in \cite{quinn}. Here we will need an improved statement, so as to have a local (in $\QF$) uniform control on the value of the mean curvature along the leaves of the foliation. 
	
	\begin{theorem}\label{thm:foliation_ends}
		Let $S$ be a closed oriented surface of genus $\geq 2$ and $m\in\QF$. Then there exists a neighbourhood $U_0$ of $m$ in $\QF$ and a constant $\epsilon=\epsilon(m,U_0)$ such that the ends of every quasi-Fuchsian manifold in $U_0$ are smoothly monotonically foliated by CMC surfaces whose mean curvature ranges in $(-1,-1+\epsilon)$ and in $(1-\epsilon,1)$.
	\end{theorem}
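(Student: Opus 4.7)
The plan is to adapt Quinn's strategy from \cite{quinn} (which itself refines the earlier work of Mazzeo--Pacard \cite{MP}) and promote it to a statement that is uniform over a sufficiently small neighbourhood $U_0$ of $m$ in $\QF$. Given a quasi-Fuchsian manifold $m'\in U_0$ corresponding to $\mathbb{H}^3/\Gamma'$ with positive end $\Omega^+/\Gamma'\cong S$, we choose a background hyperbolic metric $h_{m'}$ on $S$ in the conformal class of $\Omega^+/\Gamma'$ (depending smoothly on $m'$ thanks to Lemma \ref{lem:section}) and a conformal isomorphism $\varphi_{m'}:\mathbb{D}\to\Omega^+$. We look for CMC leaves of mean curvature $H$ close to $-1$ in the form $\Eps_{(\varphi_{m'},e^{2u+2t}h_{m'})}$, where $t>0$ is a scaling parameter and $u\in C^{k,\alpha}(S)$ is a small perturbation. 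By \eqref{eq:scale invariance} and \eqref{eq:scale invariance2}, both $K(e^{2t}h_{m'})$ and $\|B(e^{2t}h_{m'})-\tfrac{1}{2}S(\varphi_{m'})\|_{e^{2t}h_{m'}}$ rescale by the factor $e^{-2t}$, so the mean curvature formula \eqref{meancurveq2} gives $\mathcal H\to -1$ as $t\to +\infty$ with an explicit leading-order asymptotic. Parametrizing by $H\in(-1,-1+\epsilon)$ via the corresponding function $t=t(H)$, the ansatz $u\equiv 0$ is then an approximate solution of order $O(e^{-2t})$. A symmetric construction at the other end, using $\Omega^-$ and the sign convention of Remark \ref{signconv}, provides the foliation by leaves with $H$ close to $+1$.

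The core step is an application of the implicit function theorem in the Banach setting to the map
\begin{equation*}
G:U_0\times[t_0,+\infty)\times C^{k,\alpha}(S)\longrightarrow C^{k-2,\alpha}(S),\qquad G(m',t,u)\ :=\ \mathcal H\bigl(\Eps_{(\varphi_{m'},e^{2u+2t}h_{m'})}\bigr)-H(t).
\end{equation*}
Direct differentiation of \eqref{meancurveq2} shows that the linearization $\partial_u G|_{(m',t,0)}$ is an elliptic operator of the form $a(m',t)\,\Delta_{h_{m'}}+b(m',t)\,\mathrm{Id}$ up to error terms of size $O(e^{-2t})$, with $a(m',t)$ and $b(m',t)$ uniformly bounded away from zero for $t$ large. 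Such an operator is an isomorphism between the stated Hölder spaces, and one verifies that both its norm and the norm of its inverse are uniformly controlled in $m'\in U_0$ and $t\in[t_0,+\infty)$ once $U_0$ is sufficiently small and $t_0$ sufficiently large. The implicit function theorem then yields a smooth family $u(m',t)$ with $G(m',t,u(m',t))=0$ and $u(m',t)\to 0$ as $t\to\infty$, uniformly in $m'\in U_0$. This produces, for every $m'\in U_0$ and every $H\in(-1,-1+\epsilon)$, a closed embedded surface $S_{m',H}\subset M'$ of mean curvature exactly $H$.

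To upgrade this one-parameter family to a smooth monotone foliation of each end, I would apply the implicit function theorem a second time to the map $(t,x)\mapsto \Eps_{(\varphi_{m'},e^{2u(m',t)+2t}h_{m'})}(x)$ viewed as a parametrization of $S\times[t_0,+\infty)$ into the positive end of $M'$. Transversality of $\partial_t$ to each leaf $S_{m',H(t)}$ follows from the explicit formula \eqref{eq:SLframe}: the scaling $\sigma\mapsto e^{2t}\sigma$ translates the Epstein image along the geodesic normal direction at leading order, while $\partial_t u(m',t)$ is $O(e^{-2t})$ and therefore negligible. Together with the fact that the leaves converge to the boundary at infinity, this gives a diffeomorphism onto the complement of a compact subset of $M'$, uniform in $m'\in U_0$. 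The main obstacle throughout is precisely the uniform invertibility of $\partial_u G$ on the whole of $U_0\times[t_0,+\infty)$: the naive norm bounds deteriorate as $t\to\infty$, and overcoming this requires either rescaling the target and domain norms by an appropriate power of $e^{t}$ (so as to absorb the $O(e^{-2t})$ remainder terms into a fixed small constant) or, equivalently, working in weighted function spaces adapted to the conformal scaling of the Epstein construction; the remaining verifications are regularity statements in elliptic theory and a smooth dependence on parameters of the data $(h_{m'},\varphi_{m'},\phi_{m'})$.
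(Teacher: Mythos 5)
Your overall skeleton --- Epstein maps, an implicit function theorem in the conformal factor with the quasi-Fuchsian data as a finite-dimensional parameter, and a second differentiation step to upgrade the leaves to a foliation --- is the same as the paper's (both follow Quinn). But the way you set up the first IFT has a genuine gap. With $G(m',t,u)=\mathcal H\bigl(\Eps_{(\varphi_{m'},e^{2u+2t}h_{m'})}\bigr)-H(t)$, the linearization $\partial_u G$ at $u=0$ is \emph{not} of the form $a\Delta+b\,\mathrm{Id}$ with $a,b$ uniformly bounded away from zero: since $K(e^{2u+2t}h_{m'})=O(e^{-2t})$ and $\|B(e^{2u+2t}h_{m'})-\phi/2\|^2_{e^{2u+2t}h_{m'}}=O(e^{-4t})$, formula \eqref{meancurveq2} gives $\mathcal H+1=O(e^{-2t})$ together with all of its $u$-derivatives, so the coefficients of $\partial_u G$ degenerate at exactly the rate $e^{-2t}$; your claim in the middle paragraph contradicts the obstacle you then concede at the end. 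Deferring the fix to ``rescaled norms or weighted spaces'' leaves out precisely the analytic content of the theorem; moreover, since $u\equiv 0$ is only an \emph{approximate} solution, the plain IFT does not apply and you would need a quantitative contraction argument with error, inverse and Lipschitz bounds uniform in $(m',t)\in U_0\times[t_0,\infty)$, none of which is carried out. The paper removes the difficulty at the source: the change of variables \eqref{eq:change variables} (equivalently, clearing denominators and rescaling by $\tfrac{1+H}{1-H}$) turns the CMC condition into the equation $G(H,h,\phi,v)=0$ of \eqref{eq:us2}, which extends smoothly to the boundary value $H=-1$ where $v_0\equiv 0$ is an \emph{exact} solution and $d_vG=2(2\,\mathrm{id}-\Delta_{h_0})$ is invertible by Lemma \ref{lem filippo}; one IFT application at $(-1,h_0,\phi_0,v_0)$, with $(h,\phi)$ ranging in the finite-dimensional set $U_0$, then gives solutions for all $H\in[-1,-1+\epsilon)$ with the uniformity over $U_0$ built in for free. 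If you insist on the $t\to\infty$ formulation, multiplying the equation by $e^{2t}$ and reparametrizing essentially reproduces this renormalization, so you may as well do it cleanly from the start.

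The foliation step also needs more than you give. Your transversality observation is correct (it is the derivative $(0,\sqrt{2}e^{-\varrho})$ the paper computes from \eqref{eq:SLframe}), but a local diffeomorphism whose leaves escape to infinity is not automatically injective: you must show the leaves are pairwise disjoint and that the combined map is one-to-one on $S\times[t_0,\infty)$, uniformly in $m'\in U_0$. The paper does this by compactifying at $H=-1$ --- the slice at infinity, where the map restricts to the injective boundary map $f_{\tilde\phi}$ --- and then invoking the compactness argument of Lemma \ref{topologylemma} to promote local injectivity along that compact slice to injectivity on $S\times[0,\delta')\times U_0$ for smaller $\delta'$ and $U_0$. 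In your parametrization there is no compact slice at $t=\infty$, so you would need to adjoin the boundary at infinity by hand (which again amounts to the paper's reparametrization $t(H)=\sqrt{1+H}$) or give an independent disjointness and monotonicity argument; as written, this is asserted rather than proved.
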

	
	We say that the ends of $M\cong S\times\R$ are the connected components of the complement of a compact submanifold with boundary in $M$ homeomorphic to $S\times I$ for $I$ a closed interval.

	\subsubsection{Outline of the CMC existence for a fixed manifold}\label{subsec:outline1}
	We now quickly review, using our notation and set-up, the proof given in \cite{quinn} and later we will explain how it adapts in order to prove Theorem \ref{thm:foliation_ends}. Roughly speaking, the proof of \cite{quinn} is  an application of the implicit function theorem to the equation of constant mean curvature from the mean curvature formula \eqref{meancurveq}, with respect to a conformal metric at infinity.

	More precisely, the idea of Quinn's  proof is to consider Epstein maps defined on $\Omega_+$, with $\varphi=\mathrm{id}$, associated to  a conformal metric of the form $\sigma(u)=e^{2u}h_0$ for $h_0$ the conformal complete hyperbolic metric, and to study the following equation  in $u$:
	$$\mathcal H(\Eps_{(\mathrm{id},\sigma(u))})=H$$
	for $H\in (-1,1)$ close to $\pm 1$. From \eqref{meancurveq}, this gives  the equation: 
	\begin{equation}\label{eq:quinn}
		\mathcal H(\Eps_{(\mathrm{id},\sigma(u))}) = \frac{K(\sigma(u))^{2}-1-16\|B(\sigma(u))\|^{2}_{\sigma(u)}}{(K(\sigma(u))-1)^{2}-16\|B(\sigma(u))\|^{2}_{\sigma(u)}}= H
	\end{equation}
	
	\begin{remark}\label{rmk:quinn1}
		If we choose a  metric $\sigma$ invariant under the quasi-Fuchsian group $\Gamma$ acting on $\Omega_+$ by biholomorphisms, then $ \|B(\sigma(u))\|^{2}_{\sigma(u)}$ is a well-defined invariant function on the quotient $S$, by \eqref{eq invariance B} and \eqref{eq invariance norm}. This shows that the equation \eqref{eq:quinn} can be really thought as an equation for a function $u$ on the quotient surface $S$, where $\sigma(u)=e^{2u}h_0$ is a metric on $S$.
	\end{remark}
	
	\begin{remark}\label{rmk:quinn2}
		In the situation of Remark \ref{rmk:quinn1}, the uniqueness property of the Epstein map as in Proposition \ref{prop:eps map} implies that the Epstein surface is invariant under the quasi-Fuchsian group $\Gamma$. More precisely, for any $\gamma\in \Gamma$, we have
		\begin{equation}\label{eq:invariance}
			\Eps_{(\mathrm{id},\sigma(u))}\circ\gamma=\gamma\circ \Eps_{(\mathrm{id},\sigma(u))}~.
		\end{equation}
		Therefore $\Eps_{(\mathrm{id},\sigma(u))}$ induces a map from $\Omega^+/\Gamma$ to the quasi-Fuchsian manifold $\Hyp^3/\Gamma$.
	\end{remark}
	
	Now the trick consists in performing a renormalization to Equation \eqref{eq:quinn}, so as to obtain an equivalent equation, for which we can find an explicit solution for $H=-1$. This consists in the change of variables from $(H,u)$ to $(H,v)$, where
	\begin{equation}\label{eq:change variables}
		v:=u+\frac{1}{2}\log\left(\frac{1+H}{1-H}\right)~.
	\end{equation}
	Let us now set $\tau(v)=e^{2v}h_0$, so that we have the identity:
	\begin{equation}\label{eq:change variables2}
		\tau(v)=\frac{1+H}{1-H}\sigma(u)~.
	\end{equation}
	A direct computation from \eqref{eq:quinn} and \eqref{eq:change variables2} (and using also \eqref{eq:scale invariance2}) shows that $u$ solves \eqref{eq:quinn} for $H\in (-1,1)$ if and only if $v$ solves the equation:
	
	\begin{equation}\label{eq:quinn2}
		G(H,v):= 1-H -2HK(\tau(v) ) + (-1-H)\left(K(\tau(v))^{2}- 16\|B(\tau(v))\|^{2}_{\tau(v)}\right)=0~.
	\end{equation}
	
	The big advantage is that now the choice $v_0\equiv 0$ satisfies $G(-1,v_0)=0$, since $\tau(v_0)=h_0$  and $K(h_0)=-1$.
	Hence we are in the right setting to apply the implicit function theorem near this solution $(-1,v_0)$ of the equation $G=0$ (see e.g. \cite[\S I.5]{lang}). One must show that the derivative of $G$ with respect of $u$ is an invertible operator between suitable function spaces (see details below), and achieves a family of solutions $\mathsf v=\mathsf v(H)$ of \eqref{eq:quinn2} depending smoothly on $H$, for $H\in [-1,-1+\epsilon)$. This will provide CMC surfaces with mean curvature $H$ close to $-1$ via the Epstein maps $\Eps_{(\mathrm{id},\sigma(\mathsf u(H)))}$, where
	$$\mathsf u(H)=\mathsf v(H)-\frac{1}{2}\log\left(\frac{1+H}{1-H}\right)~.$$

	\subsubsection{Adaptation for Theorem \ref{thm:foliation_ends}}\label{subsec:adapt}
	
	We will now describe the extension of this strategy in our setting. The difference is that we need to allow the quasi-Fuchsian manifold to vary as well, represented by a variation of a pair $(h,\phi)$, and thus of the holomorphic map $f=f_{\tilde\phi}$ which gives a biholomorphism between $\D$ and the domain $\Omega^+$. Let us explain this in detail.
	
	To make explicit the dependence on the hyperbolic metric $h$, we now denote $\sigma_h(u):=e^{2u}h$. We need to replace Equation \eqref{eq:quinn} by the condition that the mean curvature of the Epstein map $\Eps_{(f_{\tilde\phi},\sigma_h(u))}$ equals $H$. From Equation \eqref{meancurveq2}, we see that such identity reads:
	
	\begin{equation}\label{eq:us}
		\frac{K(\sigma_h(u))^{2}-1-16\|B(\sigma_h(u))-\phi/2\|^{2}_{\sigma_h(u)}}{(K(\sigma_h(u))-1)^{2}-16\|B(\sigma_h(u))-\phi/2\|^{2}_{\sigma_h(u)}}= H
	\end{equation}
	where we have used that the holomorphic quadratic differential induced in the quotient by $S(f_{\tilde\phi})$ equals $\phi$ by construction. This is again an equation on the closed surface $S$, and the same change of variables as in \eqref{eq:change variables} leads to the equation:
	
	\begin{equation}\label{eq:us2}
		G(H,h,\phi,v):= 1-H -2HK(\tau_h(v) ) + (-1-H)\left(K(\tau_h(v))^{2}- 16\|B(\tau_h(v))-\phi/2\|^{2}_{\tau_h(v)}\right)=0~,
	\end{equation}
	where now $\tau_h(v)=e^{2v}h$. 
	
	Now, fix a hyperbolic metric $h_0$ on $S$ and a holomorphic quadratic differential $\phi_0$ on $(S,h)$. Similarly to Section \ref{subsec:outline1}, a solution to Equation \eqref{eq:us2} is given by $(-1,h_0,\phi_0,v_0)$ where $v_0$ denotes the constant null function, since $\tau_{h_0}(v_0)=h_0$ has curvature $-1$. 
	To apply the implicit function theorem, let us describe carefully the domain of definition of $G$. Recall from Remark \ref{rmk:wolf} that the choice of a section as in Lemma \ref{lem:section} provides us with a diffeomorphism between $\Q(S)$ and  $\R^{12g-12}$. We consider thus the open subset $W$ of $\R^{12g-12}$ that corresponds to the image of $\QF$ under the map $\mathcal S$ introduced in  \eqref{eq:schwarzian map}. By a small abuse of notation, we will denote the elements of $W$ as a pair $(h,\phi)$, where $h$ is a hyperbolic metric and $\phi$ a holomorphic quadratic differential on $(S,h)$. Then we consider $G$ as a map
	$$G:\R\times W\times W^{2,s}(S, h_0)\to W^{2,s - 2}(S, h_0)$$
	for $s\geq 2$, where $W^{2,s}(S, h_0)$ denotes the Sobolev space of real-valued functions on $S$ that admit $L^2$-integrable weak derivatives of order $\leq s$ (with respect to the standard Riemannian measure of $h_0$), and $W^{2,0}(S,h_0) : = L^2(S, h_0)$. By direct inspection, $G$ depends smoothly on all variables. 
	We now need to show that the derivative $d_{v}G_{(-1,h_0,\phi_0,v_0)}$ is a bounded invertible operator, for any $s\geq 2$. A simple computation gives: 
	
	\begin{equation}\label{eq:differential}
		\begin{split}
			d_{v}G_{(-1,h_0,\phi_0,v_0)}(\dot v) &= 2 \left.\frac{d}{dv}\right|_{v=v_0} (K(e^{2v}h)) \\
			& = 2 \left.\frac{d}{dv}\right|_{v=v_0}(e^{-2v}(-\Delta_{h_0} v+K(h_0))  \\
			&= 2(2\dot v-\Delta_{h_0}\dot v)
		\end{split}
	\end{equation} 
	It is well-known that such an operator is a continuous linear isomorphism; we provide here a sketch of proof for convenience of the reader.
	
	\begin{lemma}\label{lem filippo}
		Let $f$ be a smooth and strictly positive function, and let $h$ be any Riemannian metric on a compact surface $S$. Then the operator $u \mapsto f u - \Delta_h u$ is a positive definite and continuous linear isomorphism from $W^{2,s}(S, h)$ to $W^{2,s-2}(S, h)$ for any $s\geq 2$. In particular, for any smooth function $\lambda$ on $S$, there exists a unique smooth function $u$ satisfying $\Delta_h u - f u = \lambda$.
	\end{lemma}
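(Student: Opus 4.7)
The plan is to prove this by standard second-order linear elliptic theory on a closed Riemannian manifold, organizing the argument around three ingredients: continuity of $L$, $L^2$-coercivity via integration by parts (giving positive-definiteness and injectivity), and Fredholm theory combined with formal self-adjointness (giving surjectivity). I would first settle the base case $s=2$ and then bootstrap to every $s\geq 2$ by elliptic regularity.

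Continuity of $L u := f u - \Delta_h u : W^{2,s}(S,h) \to W^{2,s-2}(S,h)$ is immediate, since $\Delta_h$ is bounded by construction of the Sobolev norms and multiplication by $f\in C^\infty(S)$ is bounded on each $W^{2,s-2}(S,h)$. Testing $Lu$ against $u$ in $L^2$ and integrating by parts on the closed manifold $S$ yields
\[
\langle Lu, u \rangle_{L^2(S,h)} \;=\; \int_S f\, u^2 \, dv_h \;+\; \int_S |\nabla u|_h^2 \, dv_h \;\geq\; c_0 \,\|u\|_{L^2(S,h)}^2,
\]
with $c_0 := \min_S f > 0$ by continuity of $f$ and compactness of $S$. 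This establishes both the positive-definiteness of the associated quadratic form and the injectivity of $L$.

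The principal symbol of $L$ at a covector $\xi$ equals $|\xi|_h^2$, which is nonzero for $\xi\neq 0$, so $L$ is uniformly elliptic of order two with smooth coefficients on the closed surface $S$. Standard elliptic theory then implies that $L: W^{2,2}(S,h) \to L^2(S,h)$ is Fredholm of index zero; the formal self-adjointness of $L$ (symmetry of $\Delta_h$ together with reality of $f$) combined with the injectivity above yields $\mathrm{coker}\, L \cong \ker L^* = \ker L = \{0\}$, so $L$ is an isomorphism in the case $s=2$. For general $s\geq 2$, bijectivity follows by bootstrapping: given $\lambda \in W^{2,s-2}(S,h)$, the unique $u \in W^{2,2}(S,h)$ with $Lu = \lambda$ automatically lies in $W^{2,s}(S,h)$ by iterated interior elliptic regularity for $\Delta_h$, and the inverse is bounded by the open mapping theorem; the smoothness claim for $\lambda \in C^\infty(S)$ then follows from $\bigcap_{s\geq 2}W^{2,s}(S,h) = C^\infty(S)$. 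Since the argument is entirely standard, I do not anticipate any real obstacle; the only point requiring care is the sign convention for $\Delta_h$, which must be fixed consistently so that the integration by parts above indeed contributes the positive $\int_S|\nabla u|_h^2 \, dv_h$ term in the coercivity estimate.
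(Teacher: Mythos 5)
Your argument is correct, but it reaches surjectivity by a genuinely different route than the paper. You invoke ellipticity of the principal symbol to conclude that $L=f\,\mathrm{id}-\Delta_h$ is Fredholm of index zero on the closed surface, and then combine formal self-adjointness with the $L^2$-coercivity (which gives injectivity) to kill the cokernel; the higher regularity and smoothness statements follow by bootstrapping, exactly as in the paper. The paper instead works variationally: it introduces the symmetric bilinear form $a(v,w)=\int_S\bigl(fvw+h(\nabla v,\nabla w)\bigr)\,\mathrm{d}a_h$ on $W^{2,1}(S,h)$, shows using $C^{-1}\le f\le C$ that $a$ is equivalent to the $W^{2,1}$ inner product, and applies the Riesz representation theorem to produce a weak solution $u\in W^{2,1}(S,h)$ of $fu-\Delta_h u=\lambda$, which elliptic regularity then promotes to $W^{2,s}(S,h)$ with the estimate $\|u\|_{W^{2,s}}\le M(\|u\|_{L^2}+\|\lambda\|_{W^{2,s-2}})$. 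The trade-off is the usual one: the Lax--Milgram/Riesz route is more elementary and self-contained, needing only Hilbert-space duality plus the regularity estimates, whereas your route imports the heavier (but entirely standard) fact that elliptic operators on closed manifolds are Fredholm of index zero, in exchange for a shorter existence argument that avoids setting up the weak formulation. Both proofs hinge on the same coercivity computation and the same elliptic bootstrapping, and your remark about fixing the sign convention of $\Delta_h$ so that the integration by parts yields $+\int_S|\nabla u|_h^2\,\mathrm{d}a_h$ is exactly the point one must watch, since the convention used here differs from the one stated earlier in the preliminaries.
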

	\begin{proof}
		Let $T$ denote the continuous linear operator
		\[
		T : = f \, \mathit{id} - \Delta_h : W^{2,s}(S, h) \to W^{2, s - 2}(S, h) ,
		\]
		for some $s \geq 2$. A simple integration by parts shows that $T$ is a positive definite symmetric operator with respect to the $L^2$-scalar product: indeed, for any $v, w \in W^{2,s}(S, h)$, we have
		\[
		\langle v, T w \rangle_{L^2} = \int_{S} v \, T w \, \mathrm{d}{a}_h = \int_{S} (f v w + h(\nabla v, \nabla w) ) \, \mathrm{d}{a}_h ,
		\]
		where $\nabla v$ denotes the (weak) gradient of $v$ with respect to the metric $h$, and $\mathrm{d}{a}_h$ is the standard Riemannian volume form. Since $f$ is a strictly positive function, $T$ satisfies $\langle v , T v \rangle_{L^2} \geq 0$ for any $v \in W^{2,s}(S, h)$, with equality if and only if $v = 0$. To prove that $T$ is surjective, let $\lambda \in W^{2,s-2}(S, h)$, and define the linear functional
		\[
		\varphi(v) : = \int_{S v\lambda \, \mathrm{d}}{a}_h .
		\]
		Notice that $\varphi$ is continuous with respect to the $L^2$-norm, and hence with respect to the Sobolev norm $\| \cdot \|_{W^{2,s}}$ for any $s \geq 0$. We now introduce the following bilinear symmetric form on $W^{2,1}(S,h)$:
		\[
		a(v,w) : = \int_{S} (f vw + h(\nabla v, \nabla w)) \, \mathrm{d}{a}_h ,
		\]
		If $C \geq 1$ is some positive constant satisfying $C^{-1} \leq f \leq C$, then we have
		\[
		C^{-1} \| v \|^2_{W^{2,1}} \leq a(v,v) \leq C \| v \|^2_{W^{2,1}} .
		\]
		for any $v \in W^{2,1}(S,h)$. Therefore the bilinear form $a$ is equivalent to the standard Hilbert scalar product of the Sobolev space $W^{2,1}(S,h)$, and therefore $\varphi$ is continuous with respect to $a$ as well. By Riesz representation theorem, we conclude that there exists a unique $u \in W^{2,1}(S,h)$ satisfying $a(u,v) = \varphi(v)$ for any $v \in W^{2,1}(S,h)$. This proves the existence of a weak solution $u \in W^{2,1}(S,h)$ of the equation $f u - \Delta_h u = \lambda$. 
		
		A more delicate analysis is then required to show that the regularity of $\lambda \in W^{2,s - 2}(S, h)$  is sufficient to "promote" $u$ to a genuine element in $W^{2,s}(S, h)$ satisfying $T u = \lambda$. This is the part of the argument where elliptic regularity theory is required, leading to controls of the form
		\[
		\| u \|_{W^{2,s}} \leq M ( \| u \|_{L^2} + \| \lambda \|_{W^{2,s - 2}} ) ,
		\]
		with the multiplicative constant $M > 0$ that depends only on $s \geq 2$, the function $f$, and the compact Riemannian surface $(S, h)$. We refer to \cite[\S 10.3.2]{nicolaescu} (see in particular \cite[Theorem 10.3.12]{nicolaescu}) for a detailed exposition of elliptic regularity results on smooth manifolds.
	\end{proof}
	
	We have thus shown that $d_{v}G:W^{2,s}(S,h_0)\to W^{2,s-2}(S,h_0)$ is a linear isomorphism at the point $(-1,h,\phi,v_0)$. We can now apply the implicit function theorem for Banach spaces, and deduce that there exist $\epsilon>0$, a neighbourhood $U_0$ of $(h_0,\phi_0)$ and a function 
	$$\mathsf v:[-1,1+\epsilon)\times U_0\to W^{2,s}(S,h_0)$$
	such that all solutions of $G=0$ in a neighbourhood of $(-1,h_0,\phi_0,v_0)$ are of the form $G(H,h,\phi,\mathsf v(H,h,\phi))=0$. Exactly as in \cite{quinn}, one can then apply elliptic regularity to show that the functions $\mathsf v(H,h,\phi)$ are smooth and depend smoothly on $(H,h,\phi)$ (see e.g. \cite[Lemma 17.16]{trudinger}).
	
	Using \eqref{eq:change variables}, we then define the function $\mathsf u:[-1,1+\epsilon)\times U_0\to W^{2,s}(S,h)$ by 
	\begin{equation}\label{eq solutions u}
		\mathsf u(H,h,\phi):=\mathsf v(H,h,\phi)-\frac{1}{2}\log\left(\frac{1+H}{1-H}\right)~.
	\end{equation}
	By construction, as $H$ varies in $[-1,-1+\epsilon)$, the Epstein maps 
	$$\Eps_{(f_{\tilde\phi},e^{2\mathsf u(H,h,\phi)})}:\D\to\Hyp^3$$ then induce (smooth) CMC immersions of mean curvature $H$. We will  see in Section \ref{sec:fol ends} below that, up to choosing smaller $\epsilon$ and $U_0$, these maps are immersions. Moreover, as observed in Remark \ref{rmk:quinn2}, they induce CMC immersions in the quasi-Fuchsian manifold whose image via the map $\mathcal S$ is the point $(h,\phi)$.
	
	Of course the same argument can be applied to the other end, namely for the component $\Omega^-$ of the domain of discontinuity, and for $H$ close to $1$. This concludes the existence part in Theorem \ref{thm:foliation_ends}.

	\subsection{Foliations of the ends}\label{sec:fol ends}
	
	We now discuss the foliation part of Theorem \ref{thm:foliation_ends}.
	For this purpose, let us first outline the proof given in \cite{quinn}, to show that the ends of a given quasi-Fuchsian manifold $M$ are foliated by CMC surfaces; we will then adapt this proof in order to complete the proof of Theorem \ref{thm:foliation_ends}.
	
	\subsubsection{Outline of the foliation statement for a fixed manifold}
	
	In our notation from the previous section, Quinn's idea is to consider, for $h_0$ and $\phi_0$ fixed, the map 
	$$\widehat\Psi:S\times [-1,-1+\epsilon)\to M\cup\partial_\infty^+M$$
	which is induced in the quotient by the map $\Psi:\Omega^+\times [-1,-1+\epsilon)\to \Hyp^3\cup\Omega^+$:
	$$\Psi(z,H)=\begin{cases}
		z & \textrm{if }H=-1 \\
		\Eps_{(\mathrm{id},e^{2\mathsf u(H)})}(z) & \textrm{if }H>-1
	\end{cases}$$
	Then one would like to show that $\Psi$ is a local diffeomorphism at every $(z,-1)$, and use a compactness argument to deduce that $\widehat\Psi$ is a diffeomorphism from $S\times [-1,-1+\epsilon')$ onto its image, up to choosing $\epsilon'<\epsilon$ sufficiently small.
	
	Unfortunately, the differential of the map $\Psi$ written above is not a injective at the points $(z,-1)$. However, this is easily fixed by a reparameterization of the  parameter $H$. Set $t(H)=\sqrt{1+H}$, and write $H(t)=-1+t^2$ for $t>0$. Then we modify the map $\Psi$ above to a new map, that we call again $\Psi:\Omega^+\times [0,\delta)\to \Hyp^3\cup\Omega^+$ with an abuse of notation, for $\delta=\sqrt{1+\epsilon}$. It is defined by:
	\begin{equation}\label{eq:map corrected}
		\Psi(z,t)=\begin{cases}
			z & \textrm{if }t=0 \\
			\Eps_{(\mathrm{id},e^{2\mathsf u(H(t))})}(z) & \textrm{if }t>0
		\end{cases}
	\end{equation}
	
	The map in \eqref{eq:map corrected} is now the expression that we would like to differentiate at points $(z,t=0)$. This is easily done using the following explicit expression for the Epstein map when $\varphi=\mathrm{id}$ and $\sigma=e^{2\eta}|dz|^2$, which is a consequence of the formula \eqref{eq:SLframe}:
	
	\begin{align*}
		\Eps_{(\mathrm{id},\sigma)}(z)= (z,0)+ \frac{2}{e^{2\eta}+4|\eta_{z}|^{2}}(2\eta_{\bar z},e^{\eta})~.
	\end{align*}
	We must apply this formula to the metric $\sigma(u)=e^{2\eta}|dz|^2=e^{2u}h_0$, for 
	$$u=\mathsf u(H(t))=\mathsf v(H(t))-\frac{1}{2}\log\left(\frac{1+H(t)}{1-H(t)}\right)$$
	as in \eqref{eq solutions u}. Writing $v=\mathsf v(H(t))$ and  $\tau(v)=e^{2v}h_0=e^{2\lambda}|dz|^2$, we have 
	$$\eta=\lambda-\frac{1}{2}\log\left(\frac{1+H(t)}{1-H(t)}\right)$$
	and therefore we obtain the expression:
	
	\begin{equation}\label{eq:long}
		\begin{split}
			\Eps_{(\mathrm{id},\sigma(\mathsf u(H(t)))}(z)&= (z,0)+ \frac{2}{e^{2\lambda}+4\frac{1+H(t)}{1-H(t)}|\lambda_{\bar z}|^{2}}\left(2\frac{1+H(t)}{1-H(t)}\lambda_{z},\sqrt{\frac{1+H(t)}{1-H(t)}}e^{\lambda}\right) \\
			&=(z,0)+ \frac{2}{e^{2\lambda}+\frac{4t^2}{2-t^2}|\lambda_{z}|^{2}}\left(\frac{2t^2}{2-t^2}\lambda_{\bar z},\sqrt{\frac{t^2}{2-t^2}}e^{\lambda}\right)~.
		\end{split}
	\end{equation}
	From here, one sees that the limit as $t\to 0^+$ (that is, as $H\to -1^+$) of  $\Eps_{(\mathrm{id},\sigma(\mathsf u(H(t))))}(z)$ equals $z$. Moreover, the derivative of $\Eps_{(\mathrm{id},\sigma(\mathsf u(H(t)))}$ with respect to $t$ at $t=0$ equals $(0,\sqrt 2e^{-\varrho})$ where $\varrho$ is the density of the hyperbolic metric on $\Omega^+$ with respect to $|dz|^2$. Therefore we have
	(in real coordinates on the upper half-space): 
	
	\begin{equation}\label{eq:invertible matrix 1}
		d\Psi_{(z,0)}=\begin{pmatrix}
			1
			& 0 & 0 \\
			0 & 1 &
			0  \\
			
			0 & 0 &	\sqrt{2}e^{-\varrho}
			
		\end{pmatrix}
	\end{equation}
	which is clearly invertible.

	\subsubsection{Adaptation for Theorem \ref{thm:foliation_ends}}\label{subsec:adapt2}
	
	The above construction by Quinn is analogue to the one that we apply here, up to a modification in order to be able to choose $\epsilon'$ uniformly when
	the pair $(h,\phi)$ varies in a small neighbourhood of $(h_0,\phi_0)$. For this purpose, we modify the maps above (which we denote with the same symbol by a small abuse of notation) to:

	$$\Psi:\D\times [0,\delta)\times U_0\to \left(\Hyp^3\cup\partial_\infty\Hyp^3\right)\times U_0$$
	defined by (recall the definition of $u(H,h,\phi)$ in \eqref{eq solutions u}):
	\begin{equation}\label{eq:defi new Psi}
		\Psi(z,H,h,\phi)=\begin{cases}
			\left(f_{\tilde\phi}(z),h,\phi\right) & \textrm{if }t=0 \\
			\left(\Eps_{(f_{\tilde\phi},e^{2\mathsf u(H(t),h,\phi)})},h,\phi\right) & \textrm{if }t>0
		\end{cases}
	\end{equation}
	The map $\Psi$ therefore  induces a continuous map
	$$\widehat\Psi:S\times [0,\delta)\times U_0\to \left(M\cup\partial_\infty^+M\right)\times U_0~.$$
	The first step consists in showing that the differential of $\Psi$ (and therefore of $\widehat \Psi)$ is invertible at the points $(z,t=0)$.
	
	\begin{lemma}
		For every $z\in\D$ and every pair $(h,\phi)\in U_0$, the differential at $(z,0,h,\phi)$ of the map $\Psi:\D\times [0,\delta)\times U_0\to \left(\Hyp^3\cup\partial_\infty\Hyp^3\right)\times U_0$ defined in \eqref{eq:defi new Psi} is invertible.
	\end{lemma}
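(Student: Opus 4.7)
First, since $\Psi(z,t,h,\phi)=(\Psi_1(z,t,h,\phi),h,\phi)$ has the last two components equal to the identity on $U_0$, the differential at $(z,0,h,\phi)$ is block lower-triangular:
\[
d\Psi_{(z,0,h,\phi)}=\begin{pmatrix} d_{(z,t)}\Psi_1 & d_{(h,\phi)}\Psi_1 \\ 0 & \mathrm{id} \end{pmatrix},
\]
so invertibility of $d\Psi$ reduces to showing that
\[
d_{(z,t)}\Psi_1 : T_z\D\oplus T_0[0,\delta) \longrightarrow T_{f_{\tilde\phi}(z)}(\Hyp^3\cup\partial_\infty\Hyp^3)
\]
is an isomorphism. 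Source and target both have real dimension $3$, so it suffices to prove injectivity, and this will follow from two subclaims: (i) the image of $d_z\Psi_1|_{t=0}$ is the full tangent plane to $\partial_\infty\Hyp^3$ at $f_{\tilde\phi}(z)$, and (ii) $\partial_t\Psi_1|_{t=0}$ is transverse to $\partial_\infty\Hyp^3$.

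Subclaim (i) is immediate: from $\Psi_1(z,0,h,\phi)=f_{\tilde\phi}(z)$ one reads $d_z\Psi_1|_{t=0}=df_{\tilde\phi}|_z$, which is an $\R$-linear isomorphism onto $T_{f_{\tilde\phi}(z)}\partial_\infty\Hyp^3$ because, by construction of $W$, the map $f_{\tilde\phi}:\D\to\Omega^+$ is a biholomorphism when $(h,\phi)\in U_0\subset \mathcal S(\QF)$.

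For subclaim (ii), which is the only step with real content, the idea is to reduce to the explicit computation performed in Section \ref{sec:fol ends} for $\varphi=\mathrm{id}$. The key tool is the naturality identity
\[
\Eps_{(f_{\tilde\phi},\sigma)}(z)=\Eps_{(\mathrm{id},(f_{\tilde\phi})_\ast\sigma)}(f_{\tilde\phi}(z)),
\]
which is an immediate consequence of the defining relation $(\varphi^\ast V_{\Eps(z)})(z)=\sigma(z)$ and the equivariance of the visual metric under M\"obius transformations. Since the $t$-dependence of $\Psi_1$ enters only through the conformal factor $e^{2\mathsf u(H(t),h,\phi)}$, this identity moves the computation of $\partial_t\Psi_1|_{t=0}$ from $z\in\D$ to $w=f_{\tilde\phi}(z)\in\Omega^+$, with $\varphi=\mathrm{id}$ and the pushed-forward metric $(f_{\tilde\phi})_\ast(e^{2\mathsf u(H(t),h,\phi)}h)$ on $\Omega^+$. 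At $t=0$ this pushed-forward metric is exactly the Poincar\'e metric on the simply connected domain $\Omega^+$, because $f_{\tilde\phi}:\D\to\Omega^+$ is a biholomorphism and $h$ lifts to the Poincar\'e metric on $\D$. The explicit formula \eqref{eq:long} and the matrix computation \eqref{eq:invertible matrix 1} therefore apply verbatim at $w$ and yield $\partial_t\Psi_1|_{t=0}=(0,\sqrt{2}\,e^{-\varrho(w)})$ in upper half-space coordinates, where $e^{2\varrho}|dz|^2$ denotes the Poincar\'e metric of $\Omega^+$; this vector is manifestly transverse to $T_w\partial_\infty\Hyp^3$.

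Combining (i) and (ii) gives injectivity, and hence invertibility by dimension. What makes the argument work is precisely the reparameterization $t=\sqrt{1+H}$ introduced in \eqref{eq:map corrected}, without which $\partial_H\Psi_1|_{H=-1}$ would degenerate; everything else is a formal consequence of the block structure of $\Psi$ and the naturality of the Epstein construction.
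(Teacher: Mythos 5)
Your proposal is correct and follows essentially the same route as the paper: reduce by the block-triangular structure to the derivatives in $(z,t)$ with $(h,\phi)$ fixed, then transfer the computation to the coordinate $w=f_{\tilde\phi}(z)$ (the paper does this as a local change of variables, you phrase it via the naturality identity for Epstein maps, which amounts to the same thing) and reuse the explicit formula \eqref{eq:long} and the matrix \eqref{eq:invertible matrix 1}. The only cosmetic difference is that you argue injectivity by splitting into the tangential and transverse directions rather than writing the $3\times 3$ matrix, which is equivalent.
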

	\begin{proof}
		We clearly have that the differential of $\Psi$ is of the form
		\[
		d\Psi_{(z,0,h_0,\phi_0)}=\left(\begin{array}{c|c}
			d\Psi_{(z,0)}(\cdot,\cdot,h_0,\phi_0)
			& \star \\
			\hline
			0 & {1}
		\end{array}\right)
		\]
		Hence it suffices to check that the differential of $\Psi(\cdot,\cdot,h_0,\phi_0)$ is invertible, namely, to compute the derivatives with respect to $z$ and $t$ keeping $h$ and $\phi$ fixed.	For this, we can actually reduce to the computation we performed to obtain \eqref{eq:invertible matrix 1}. Indeed, since $f_{\tilde\phi_0}$ is a locally injective holomorphic function, we can change variables from $z$ to $w:=f_{\tilde\phi_0}(z)$ in a small open set on which $f_{\tilde\phi_0}$ is a biholomorphism onto its image. We can then consider $u$, $v$, $\eta$ and $\lambda$ as functions of $w$ instead of $z$, up to composing with a local inverse of $f_{\tilde\phi_0}$. (Of course here $\mathsf u$ and $\mathsf v$ are functions not only of $(z,H)$ but also of $(h,\phi)$, but since we are differentiating with $(h,\phi)$ fixed, the result will remain exactly the same.)
		
		We then obtain, as in \eqref{eq:long}, 
		$$\Eps_{(f_{\tilde\phi_0},\sigma(\mathsf u(H(t),h_0,\phi_0))}(w)=(w,0)+ \frac{2}{e^{2\lambda}+\frac{4t^2}{2-t^2}|\lambda_{w}|^{2}}\left(\frac{2t^2}{2-t^2}\lambda_{\bar w},\sqrt{\frac{t^2}{2-t^2}}e^{\lambda}\right)~.$$
		
		Differentiating as above, we obtain the same expression as in \eqref{eq:invertible matrix 1}, which is invertible. Since $w$ is a local coordinate and the choice of $(h_0,\phi_0)$ is arbitrary, the differential of $\Psi$ is invertible at the point $(z,0,h,\phi)$ for any $z,h,\phi$.
	\end{proof}
	
	Therefore, $\widehat \Psi$ is a local diffeomorphism in a neighbourhood of every point $(z,0,h,\phi)$. We now prove an easy topological lemma.
	
	\begin{lemma}\label{topologylemma}
		Let $X$ be a metrizable compact topological space, $Y$ any topological space and $V$ an open subset of $\mathbb{R}^{n}$ containing the origin. Let $F: X \times V \rightarrow Y$ be a continuous map such that\begin{itemize} 
			\item  $F|_{X\times \left\lbrace 0\right\rbrace }$ is injective and 
			\item $F$ is locally injective at any $(x,0)\in X\times\{0\}$. \end{itemize}Then there exists a neighbourhood $V' \subset V $ of the origin such that $F|_{X\times V'}$ is injective.
	\end{lemma}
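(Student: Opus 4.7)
My approach is by contradiction, combining the hypothesis of local injectivity at points of $X\times\{0\}$ with the compactness of $X$ and a diagonal subsequence argument. I would first metrize $X$ with some distance $d_X$, fix a norm on $\mathbb{R}^n$, and suppose for contradiction that no such open neighbourhood $V'\subset V$ of the origin exists. This means that for every $n\geq 1$ there are distinct pairs
\[
(x_n,v_n),(x_n',v_n')\in X\times V_n,\qquad F(x_n,v_n)=F(x_n',v_n'),
\]
where $V_n\subset V$ denotes the ball of radius $1/n$ around the origin. By construction $v_n,v_n'\to 0$.

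Since $X$ is compact and metrizable, by extracting subsequences I may assume $x_n\to x_\infty$ and $x_n'\to x_\infty'$ in $X$. Continuity of $F$ then gives
\[
F(x_\infty,0)=\lim_n F(x_n,v_n)=\lim_n F(x_n',v_n')=F(x_\infty',0),
\]
and the hypothesis that $F|_{X\times\{0\}}$ is injective forces $x_\infty=x_\infty'$. Therefore both sequences $(x_n,v_n)$ and $(x_n',v_n')$ converge to the same point $(x_\infty,0)\in X\times\{0\}$.

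Finally I would invoke local injectivity of $F$ at $(x_\infty,0)$: there is an open neighbourhood $\mathcal{O}$ of $(x_\infty,0)$ in $X\times V$ on which $F$ is injective. For $n$ large enough both $(x_n,v_n)$ and $(x_n',v_n')$ lie in $\mathcal{O}$, yet they are distinct and share the same image under $F$. This is the desired contradiction, completing the proof.

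The argument is essentially formal once the setup is in place; the only point requiring mild care is the metrizability hypothesis on $X$, which I use so that compactness gives sequential compactness and allows the extraction of convergent subsequences. No further regularity of $F$ beyond continuity, nor any structure on $Y$, is needed. In the later application to Theorem \ref{thm:foliation_ends}, this lemma is applied with $X=S$ (closed surface, hence compact metrizable), $V\subset [0,\delta)\times U_0$, and $F=\widehat{\Psi}$, with the two hypotheses already verified (injectivity at $t=0$ from the definition of $\widehat\Psi$, and local injectivity from the invertibility of the differential established in the previous lemma together with the inverse function theorem).
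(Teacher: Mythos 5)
Your argument is correct and coincides with the paper's own proof: the same contradiction setup with sequences converging to a point of $X\times\{0\}$, sequential compactness of $X$, continuity plus injectivity of $F|_{X\times\{0\}}$ to identify the limits, and local injectivity at that limit point to derive the contradiction. No gaps; nothing further is needed.
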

	\begin{proof}
		Assume that there exists no such neighbourhood $V'$ where $F|_{X\times V'}$ is injective. Then there exist sequences $(x_{n},t_{n})_{n\in\mathbb N}$ and $(x'_{n},t'_{n})_{n\in\mathbb N}$ with $t_{n},t'_{n}\rightarrow 0$ such that $(x_n,t_n)\neq (x_n',t_n')$ and $F(x_{n},t_{n})=F(x'_{n},t'_{n})$. Since $X$ is metrizable and compact, it is sequentially compact, and we can extract a convergent subsequence from both $(x_{n})_{n\in\mathbb N}$ and $(x'_{n})_{n\in\mathbb N}$. Let the respective limit points be $x_{\infty}$ and $x'_{\infty}$.  By continuity of $F$ we have that $F(x_{\infty},0)=F(x'_{\infty},0)$ which  implies that $x_{\infty}=x'_{\infty}$ since $F|_{X\times \{0\}}$ is injective. But $F$ is assumed to be locally injective in a neighbourhood of $(x_{\infty},0)$, which means that for $n$ large enough,  $(x_{n},t_{n})=(x'_{n},t'_{n})$. This gives a contradiction. 
	\end{proof}
	
	We are now ready to conclude the proof of Theorem \ref{thm:foliation_ends}. Indeed by Lemma \ref{topologylemma} the map $\widehat \Psi$ is an injective local diffeomorphism, if we restrict further its domain of definition, choosing smaller $\delta$ and $U_0$. Hence it is a diffeomorphism onto its image. In particular, composing with the projection to the first factor $M\cup\partial_\infty^+M$ gives a diffeomorphism from $S\times[0,\delta)$ to its image for all $(h,\phi)$ in $U_0.$ 
	Since $H(t)=-1+t^2$ is a diffeomorphism between $(0,\delta)$ and $(-1,-1+\epsilon)$ for $\epsilon=-1+\delta^2$, we have that for every $(h,\phi)$ in $U_0$ and every $H\in(-1,-1+\epsilon)$ the Epstein maps $\Eps_{(f_{\tilde\phi},\sigma(\mathsf u(H,h,\phi))}$ induce a smooth family of embeddings in the quasi-Fuchsian manifold $M$ corresponding to $(h,\phi)$ of constant mean curvature $H$. 
	
	Of course, the same argument can be repeated for $H$ close to $-1$, obtaining a monotone CMC foliation of a neighbourhood of $\partial_\infty^-M$. Clearly, up to choosing a smaller $\epsilon$ and a smaller $U_0$, we can assume that the regions of $m\in U_0$ foliated by surfaces with CMC in $(-1,-1+\epsilon)$ and in $(1-\epsilon,1)$ are disjoint. This means that for every $m\in U_0$, these CMC surfaces foliate the complement of a compact set homeomorphic to $S\times I$. 
	This concludes Theorem \ref{thm:foliation_ends}.

	\subsection{Existence in the compact part}\label{sec:existence_compact}
	
	We now prove the existence of CMC surfaces, with mean curvature in $(-1,1)$, in a neighbourhood of any \emph{Fuchsian} manifold. Again, we will need to have some (although very weak) local uniform control on the value of the mean curvature, as in the following statement.
	
	\begin{theorem}\label{thm:existence_compact}
		Let $S$ be a closed oriented surface of genus $\geq 2$, $H_0\in (-1,1)$ and $m\in\F(S)$. Then there exists a neighbourhood $U_{H_0}$ of $m$ in $\QF$ and a constant $\epsilon=\epsilon(m,U_{H_0},H_0)$ such that, for every $H\in (H_0-\epsilon,H_0+\epsilon)$, every quasi-Fuchsian manifold in $U_{H_0}$ contains CMC surfaces with mean curvature $H$, which vary smoothly with respect to $H$. Moreover, we can assume that all such CMC surfaces have principal curvatures in $(-1,1)$. 
	\end{theorem}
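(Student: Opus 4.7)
The strategy I would follow is to adapt the implicit function theorem scheme of Section~\ref{sec:foliation_ends}, with the crucial difference that the initial solution is no longer degenerate at infinity but is the umbilical CMC surface living inside the Fuchsian manifold $m$. Concretely, for every $H\in(-1,1)$, $m$ is foliated by equidistant umbilical surfaces of constant mean curvature $H$; via the Epstein construction applied to the Fuchsian data $(h_m,0)\in W$, this surface is the Epstein image of the conformal metric $e^{2u_0(H)}h_m$ on $\partial_\infty^+ m$, where $u_0(H)=-\tfrac{1}{2}\log\tfrac{1+H}{1-H}$. A direct computation using $K(h_m)=-1$, $B(h_m)=0$ (since hyperbolic metrics are M\"obius flat, cf.\ Section~\ref{subsec:mob flat}), the scale invariances \eqref{eq:scale invariance}--\eqref{eq:scale invariance2}, and formula \eqref{meancurveq2} confirms that indeed $\mathcal H=H$ for this choice.

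I would then employ precisely the same renormalization $v=u+\tfrac{1}{2}\log\tfrac{1+H}{1-H}$ and the same functional $G:\R\times W\times W^{2,s}(S,h_0)\to W^{2,s-2}(S,h_0)$ as in \eqref{eq:us2}. The advantage of this renormalization is that the Fuchsian solution corresponds to $v_0\equiv 0$ for \emph{every} $H_0\in(-1,1)$, so that $G(H_0,h,0,0)=0$ at every Fuchsian point $(h,0)$ of $W$. The linearization with respect to $v$ at this base point is computed exactly as in \eqref{eq:differential}: because $K(h)=-1$ and $B(h)=0$, the terms involving $\|B(\tau_h(v))-\phi/2\|^2_{\tau_h(v)}$ contribute nothing at first order in $\dot v$, and one again obtains
\[ d_vG_{(H_0,h,0,0)}(\dot v)=2(2\dot v-\Delta_h\dot v), \]
which is a continuous linear isomorphism $W^{2,s}(S,h)\to W^{2,s-2}(S,h)$ by Lemma~\ref{lem filippo} applied with $f\equiv 2$.

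The implicit function theorem in Banach spaces then yields a smooth map $\mathsf v(H,h,\phi)$ defined on a neighborhood $(H_0-\epsilon,H_0+\epsilon)\times U_{H_0}$ of $(H_0,h_m,0)$, satisfying $\mathsf v(H_0,h_m,0)=0$ and $G(H,h,\phi,\mathsf v(H,h,\phi))\equiv 0$. Elliptic regularity, exactly as in the closing arguments of Section~\ref{subsec:adapt}, promotes $\mathsf v$ to a smoothly varying family of smooth functions on $S$; setting $\mathsf u(H,h,\phi)=\mathsf v(H,h,\phi)-\tfrac{1}{2}\log\tfrac{1+H}{1-H}$, the Epstein maps $\Eps_{(f_{\tilde\phi},\,e^{2\mathsf u(H,h,\phi)}h)}$ are equivariant (cf.\ Remark~\ref{rmk:quinn2}) and descend to CMC surfaces of mean curvature $H$ in the quasi-Fuchsian manifold corresponding to $(h,\phi)$.

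It remains to verify that these surfaces are genuine immersions with principal curvatures in $(-1,1)$. At the Fuchsian base point the Epstein surface is umbilical with both principal curvatures equal to $H_0\in(-1,1)$, so in particular it is a smooth embedding with non-degenerate differential. Since the shape operator of the Epstein surface depends continuously on $(H,h,\phi)$, after further shrinking $U_{H_0}$ and $\epsilon$ both principal curvatures remain strictly inside $(-1,1)$. The main (essentially the only) technical obstacle is the uniformity of $\epsilon$ over the neighborhood $U_{H_0}$: this is handled, exactly as in \cite{quinn} and Section~\ref{subsec:adapt}, by invoking the Banach-space implicit function theorem in its parametrized version and checking that the operator norm of $(d_vG)^{-1}$ is uniformly bounded near the Fuchsian locus, which follows from the smooth dependence of $G$ on all four variables together with the uniform estimate in Lemma~\ref{lem filippo}.
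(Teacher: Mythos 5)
Your proposal is correct and follows essentially the same route as the paper's own proof: the same renormalized functional $G$ from \eqref{eq:us2}, the same Fuchsian base solution $(H_0,h_0,\phi_0\equiv 0,v_0\equiv 0)$ justified by M\"obius flatness of the hyperbolic metric, invertibility of $d_vG$ via Lemma \ref{lem filippo}, the parametrized implicit function theorem for uniformity in $(h,\phi)$, and openness/continuity to get immersions with principal curvatures in $(-1,1)$ near the umbilical surface. The only cosmetic discrepancy is the constant in the linearization (the paper records it as $(1-H_0)(2\dot v-\Delta_{h_0}\dot v)$), which in either form is a positive multiple of $2\dot v-\Delta_{h_0}\dot v$ and hence invertible, so the argument is unaffected.
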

	
	To prove Theorem \ref{thm:existence_compact}, we will use a similar setting as in Section \ref{sec:foliation_ends}. Roughly, the main idea is to use the implicit function theorem in order to deform the solutions to the CMC problem in a Fuchsian manifold, which are given by umbilical surfaces equidistant from the totally geodesic surface, to solutions to the CMC problem in nearby manifolds and for nearby values of the mean curvature.
	
	\begin{proof}
		The proof is very similar to Section \ref{subsec:adapt}.  After the change of variables from $(H,h,\phi,u)$ to $(H,h,\phi,v)$, where $v$ is defined in Equation \eqref{eq:change variables}, the equation of constant mean curvature equal to $H$ for the Epstein map $\Eps_{(f_{\tilde\phi},\sigma_{h}(u))}$ is equivalent to Equation \eqref{eq:us2}, which we rewrite here for the sake of convenience:
		$$
		G(H,h,\phi,v):= 1-H -2HK(\tau_h(v) ) + (-1-H)\left(K(\tau_h(v))^{2}- 16\|B(\tau_h(v))-\phi/2\|^{2}_{\tau_h(v)}\right)=0~,
		$$
		for $\tau_h(v)=e^{2v}h$. We consider again $G$ as a map from $\R\times W\times W^{2,s}(S,h_0)$ to $W^{2,s - 2}(S,h_0)$, where $h_0$ is some fixed hyperbolic metric on $S
		$. One checks directly that, for any $H_0\in (-1,1)$, the point $(H_0,h_0,\phi_0,v_0)$ is a solution, where $v_0\equiv 0$ and $\phi_0\equiv 0$. This uses that $B(h_0)=0$ because $h_0$ lifts to the Poincar\'e metric on $\D$, which is M\"obius flat, as discussed in Section \ref{subsec:mob flat}. Of course this solution corresponds geometrically to the umbilical CMC surface in the Fuchsian manifold, obtained as an equidistant surface from the totally geodesic surface. 
		
		Hence to apply the implicit function theorem for Banach spaces, we differentiate $G$ with respect to $v$. The differential of the term $\|B(\tau_{h}(v))-\phi/2\|^{2}_{\tau_h(v)}$ vanishes because 
		$$B(\tau_{h_0}(v_0))-\phi_0/2=0~,$$ for the same reason as above. We therefore have, similarly to the proof of Theorem \ref{thm:foliation_ends} (see Equation \eqref{eq:differential}):
		
		\begin{align*}
			d_{v}G_{(H_0,h_0,\phi_0,v_0)} &= (-2H_0+1+H_0) \left.\frac{d}{dv}\right|_{v=v_0} (K(e^{2v}h)) \\
			&= (1-H_0)(2\dot v-\Delta_{h_0}\dot v)
		\end{align*} 
		Since $H_0\neq 1$, $d_{v}G_{(H_0,h_0,\phi_0,v_0)}$ is invertible by Lemma \ref{lem filippo}, and we therefore obtain a family $\mathsf v:[-1,1+\epsilon)\times U_0\to W^{2,s}(S,h_0)$ of smooth solutions, depending smoothly on $H$. 
		
		Define $\mathsf u:[-1,1+\epsilon)\times U_0\to W^{2,s}(S,h_0)$ as in \eqref{eq solutions u}. We claim that the Epstein map $\Eps_{(\tilde f_{\phi_0},e^{2\mathsf u(H_0,h_0,\phi_0)}h_0)}=\Eps_{(\mathrm{id},e^{2u_0}h_0)}$, where 
		$$u_0=\mathsf u(H_0,h_0,\phi_0)=-\frac{1}{2}\log\frac{1+H_0}{1-H_0}~,$$ 
		is an immersion with first fundamental form equal to a multiple of the hyperbolic metric $h_0$. This is of course what we expect since the geometric meaning of the solution $(H_0,h_0,\phi_0,v_0)$ is the umbilical CMC surface that descends to an equidistant surface from the totally geodesic surface in the Fuchsian manifold. The claim can actually be checked without any computation, because the Poincar\'e metric on $\D$, the vanishing quadratic differential $\phi_0$ and the constant function $u_0$ are all invariant under the group of biholomorphisms of $\D$. Hence one can use the uniqueness property in Proposition \ref{prop:eps map} to deduce that there exists a surface $S$ in $\Hyp^3$, equidistant from the totally geodesic plane whose boundary  coincides with $\partial\D$, such that Epstein map $\Eps_{(\mathrm{id},e^{2u_0}h_0)}$ is the unique embedding $\iota:\D\to S\subset\Hyp^3$ satisfying 
		$$\iota\circ \zeta=\zeta\circ\iota$$
		for every biholomorphism $\zeta$ of $\D$.
		
		Since being an immersion is an open condition, up to restricting the neighbourhood $U_{H_0}$ and taking a smaller $\epsilon$, we can therefore assume that all Epstein maps
		$$\Eps_{(f_{\tilde\phi},e^{2\mathsf u(H,h,\phi)})}:\D\to\Hyp^3$$
		are immersions, which have constant mean curvature equal to $H$ by construction.
		Hence these Epstein maps induce CMC surfaces in the quotient quasi-Fuchsian manifolds corresponding to the points $(h,\phi)$ in a neighbourhood of $(h_0,\phi_0)$.
		
		The ``moreover'' part of the statement follows again by continuity, up to restricting the neighbourhood $U_{H_0}$ and taking a smaller $\epsilon$, since the principal curvatures of the umbilical CMC surface with mean curvature $H$ are both equal to $H$, and therefore smaller than one in absolute value.
	\end{proof}

	\subsection{Conclusion of existence in a small neighbourhood}\label{sec:existence}
	
	Based on Theorems \ref{thm:foliation_ends} and \ref{thm:existence_compact}, we are now ready to prove the existence of CMC surfaces for each value of the mean curvature in $(-1,1)$, in a suitable neighbourhood of the Fuchsian locus.
	
	\begin{theorem}\label{thm:existence}
		Let $S$ be a closed oriented surface of genus $\geq 2$. Then there exists a neighbourhood $U$ of the Fuchsian locus in quasi-Fuchsian space $\QF$ such that, for every $H\in (-1,1)$, every quasi-Fuchsian manifold in $U$ contains an embedded CMC surface of mean curvature $H$.
	\end{theorem}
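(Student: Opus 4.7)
The strategy is to cover the Fuchsian locus $\F(\Sigma)$ by open neighbourhoods $U^m\subset \QF(\Sigma)$, one for each $m\in\F(\Sigma)$, on each of which the conclusion of the theorem holds, and then to take $U:=\bigcup_{m\in\F(\Sigma)}U^m$. The point is that Theorem~\ref{thm:foliation_ends} takes care of the mean-curvature values close to $\pm 1$ (with a \emph{uniform} $\epsilon$ on a neighbourhood of $m$), while Theorem~\ref{thm:existence_compact} takes care of each value $H_0$ in the middle range (again with a uniform range on a neighbourhood of $m$). A compactness argument glues them together.

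Fix $m\in\F(\Sigma)$. First apply Theorem~\ref{thm:foliation_ends} at $m$ to produce an open neighbourhood $U_0^m$ of $m$ in $\QF(\Sigma)$ and a constant $\epsilon_m>0$ such that, for every quasi-Fuchsian manifold in $U_0^m$, the two ends are smoothly monotonically foliated by CMC surfaces with mean curvature ranging in $(-1,-1+\epsilon_m)\cup(1-\epsilon_m,1)$. This settles the values of $H$ close to $\pm 1$. To deal with the remaining compact range $K_m:=[-1+\epsilon_m,\,1-\epsilon_m]$, for each $H_0\in K_m$ apply Theorem~\ref{thm:existence_compact} at the pair $(m,H_0)$ to obtain an open neighbourhood $U_{H_0}^m$ of $m$ in $\QF(\Sigma)$ and a constant $\delta_{H_0}>0$ such that every quasi-Fuchsian manifold in $U_{H_0}^m$ contains a CMC surface (with principal curvatures in $(-1,1)$, hence automatically embedded by the maximum principle as in the almost-Fuchsian setting) of mean curvature $H$ for every $H\in (H_0-\delta_{H_0},H_0+\delta_{H_0})$.

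The family $\{(H_0-\delta_{H_0},H_0+\delta_{H_0})\}_{H_0\in K_m}$ is an open cover of the compact interval $K_m$; extract a finite subcover corresponding to values $H_1,\dots,H_N\in K_m$ with associated neighbourhoods $U_{H_1}^m,\dots,U_{H_N}^m$. Define
\[
U^m\;:=\;U_0^m\,\cap\,\bigcap_{i=1}^{N}U_{H_i}^m,
\]
which is a finite intersection of open neighbourhoods of $m$, hence an open neighbourhood of $m$. By construction, every quasi-Fuchsian manifold $g\in U^m$ contains an embedded CMC surface of mean curvature $H$ for every $H\in(-1,1)$: if $H\in(-1,-1+\epsilon_m)\cup(1-\epsilon_m,1)$ one uses a leaf of the end foliations from Theorem~\ref{thm:foliation_ends}, whereas if $H\in K_m$ then $H$ lies in some interval $(H_i-\delta_{H_i},H_i+\delta_{H_i})$ and one uses the CMC surface provided by Theorem~\ref{thm:existence_compact} applied at $H_i$.

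Finally set $U:=\bigcup_{m\in\F(\Sigma)}U^m$; this is an open neighbourhood of $\F(\Sigma)$ in $\QF(\Sigma)$ with the desired property. There is no real technical obstacle in this gluing; the only step that requires mild care is the use of the compactness of $K_m$ to reduce to a \emph{finite} subcover so that the resulting intersection $\bigcap U_{H_i}^m$ is still open, together with the observation that since the Epstein CMC surfaces produced in Theorem~\ref{thm:existence_compact} have principal curvatures in $(-1,1)$ and are homotopic to $\Sigma\times\{*\}$, they are embedded (up to restricting the neighbourhoods further, which is harmless). The more delicate work — assembling these existence results into a smooth monotone global CMC foliation — is deferred to Section~\ref{sec:finish}, which is where the main theorem, Theorem~\ref{thm:foliation}, is actually concluded.
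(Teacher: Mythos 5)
Your proposal is correct and follows essentially the same argument as the paper: apply Theorem \ref{thm:foliation_ends} for $H$ near $\pm 1$, apply Theorem \ref{thm:existence_compact} for each interior value $H_0$ (with embeddedness coming from the small principal curvature property), use compactness of the interval to extract a finite subcover, intersect the finitely many neighbourhoods of each $m\in\F(S)$, and take the union over $m$. The only cosmetic difference is that you cover the compact middle interval $[-1+\epsilon_m,1-\epsilon_m]$ while the paper covers all of $[-1,1]$ including the two end intervals, which is the same compactness argument.
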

	\begin{proof}
		We will show that, for every $m\in\F(S)$, there exists a neighbourhood $V=V(m)$ of $m$ in $\QF$ such that every $m'$ in $V$ contains embedded CMC surfaces for all $H\in (-1,1)$. Taking the union of $V(m)$ as   $m$ varies in $\F(S)$ clearly provides the claimed neighbourhood of the Fuchsian locus.
		
		Let us fix a convenient notation. For the sake of simplicity, we fix $m$ in $\F(S)$, and we will omit every dependence on $m$. Theorems \ref{thm:foliation_ends} and \ref{thm:existence_compact} provide us with:
		\begin{enumerate}
			\item A neighbourhood $\widehat U$ of $m$ and a constant $\widehat \epsilon$ such that all quasi-Fuchsian manifolds in $\widehat U$ contain embedded CMC surfaces with mean curvature $H$ ranging in $(-1,-1+\widehat\epsilon)\cup (1-\widehat\epsilon,1)$, and
			\item For every $H_0\in(-1,1)$, a neighbourhood $U_{H_0}$ of $m$ and a constant $\epsilon_{H_0}$ such that   all quasi-Fuchsian manifolds in $\widehat U$ contain immersed CMC surfaces with mean curvature $H$ ranging in $(H_0-\epsilon_{H_0},H_0+\epsilon_{H_0})$ (clearly, $\epsilon_{H_0}$ will be small enough so that $(H_0-\epsilon_{H_0},H_0+\epsilon_{H_0})\subset (-1,1)$).
		\end{enumerate}
		
		Actually, in item (2), we can assume that the immersed CMC surfaces have  principal curvatures in $(-1,1)$. This implies automatically that they are embedded, see item \ref{item1} of Proposition \ref{propsmall} below.
		
		Now, the family of intervals 
		$$\mathcal F:=\left\{[-1,-1+\widehat\epsilon)\right\}\cup \left\{(1-\widehat\epsilon,1]\right\}\cup\{(H_0-\epsilon_{H_0},H_0+\epsilon_{H_0})\,|\,H_0\in(-1,1)\}$$
		is an open covering of the compact interval $[-1,1]$, hence it admits a finite subcover
		$$\mathcal F':=\left\{[-1,-1+\widehat\epsilon)\right\}\cup \left\{(1-\widehat\epsilon,1]\right\}\cup\{(H_0-\epsilon_{H_0},H_0+\epsilon_{H_0})\,|\,H_0\in\{c_1,\ldots,c_N\}\}~.$$
		Therefore the intersection
		$$U:=\widehat U\cap U_{c_1}\cap\ldots\cap U_{c_N}$$
		is an open neighbourhood of $m$ in $\QF$ with the property that for every $H\in (-1,1)$ and for every $m'$ in $U$ there exists an embedded CMC surface with constant mean curvature $H$. This concludes the proof.
	\end{proof}
	
	In the next section, we will improve the proof of Theorem \ref{thm:existence} in order to prove that the neighbourhood $U$ can be taken so as to have the property that the embedded CMC surfaces of each quasi-Fuchsian manifold $M$ in $U$ constitute a smooth monotone foliation of $M$.

	\section{Foliations of quasi-Fuchsian manifolds}\label{sec:finish}
	
	Having established the existence of embedded CMC surfaces, for $H\in (-1,1)$, in a quasi-Fuchsian manifold in a suitably small neighbourhood of the Fuchsian locus, we now refine the construction to show that, in a possibly smaller neighbourhood, there is a monotone smooth foliation by CMC surfaces.
	
	\subsection{Small principal curvatures and equidistant foliations}
	
	We will say that a $C^2$ immersion of a surface in $\Hyp^3$ has small principal curvatures if its principal curvatures are in $(-1,1)$. The following statement contains the fundamental properties that we will use on surfaces with small principal curvatures.
	
	{\proposition {\label{propsmall}}
		Let $S$ be a closed surface and let $\iota:S\to M$ be an immersion with small principal curvatures in a quasi-Fuchsian manifold $M$ homeomorphic to $S\times\R$. Then:
		\begin{enumerate}
			\myitem{$i)$}  \label{item1} The immersion $\iota$ is an embedding and a homotopy equivalence.
			\myitem{$ii)$} \label{item2}There is a diffeomorphism $\zeta:S\times\R\to M$ such that $\zeta(\cdot,0)=\iota$, $\zeta(p,\cdot)$ is the unit speed geodesic intersecting $\iota(S)$ orthogonally at $\iota(p)$, and 
			\begin{equation}\label{eq:distance}
				d_M(\zeta(p,r_1),\zeta(p,r_2))=d_M(\zeta(S\times\{r_1\}),\zeta(p,r_2))=|r_2-r_1|~.
			\end{equation}
		\end{enumerate}
		Let us choose such $\zeta$ so that $\zeta(\cdot,r)$ approaches $\partial^-_\infty M$ as $r\to+\infty$. If moreover $\iota$ has constant mean curvature $H$, then
		\begin{enumerate}[resume]
			\myitem{$iii)$} \label{item3} The mean curvature of the surface $\zeta(S\times\{r\})$ is strictly larger than $H$ if $r>0$ and strictly smaller than $H$ if $r<0$.
			\myitem{$iv)$} \label{item4} There exist differentiable functions $f_-,f_+:\R \to \R$ satisfying $f_\pm(0)=H$ and $f'_\pm(r)>0$ for all $r$, such that the mean curvature of $\zeta(S\times\{r\})$ is between $f_-(r)$ and $f_+(r)$.
		\end{enumerate}
	}
	We will refer to the function $r:M\to\R$ as the \emph{signed distance} from the embedded surface $S=\iota(S)$. 
	\begin{proof}
		Points  \ref{item1} and  \ref{item2} are well known. For point \ref{item1}, see \cite{epstein} or \cite[Proposition 4.15, Remark 4.22]{elemamseppi}. Let $\widetilde S$ be the lift of $S=\iota(S)$ to the universal cover $\Hyp^3$. To show point \ref{item2}, the fundamental property is that $\widetilde S$ stays in the concave side of any tangent horosphere (see \cite[Lemma 4.11]{elemamseppi}), hence \emph{a fortiori} on the concave side of any tangent metric ball centered at a point $P$ outside $\widetilde S$. This implies that the geodesics orthogonal to $S$ are pairwise disjoint and form a global foliation in lines of $M$. Moreover, the distance from $S$ is realized along the orthogonal geodesic through $P$. Observe that if $S=\iota(S)$ has small principal curvatures, then all equidistant surfaces $\zeta(S\times\{r\})$ also have small principal curvatures (\cite[Chapter 3]{epstein} or \cite[Corollary 4.4]{elemamseppi}). Hence one can repeat the above argument replacing $S$ with $\zeta(\Sigma\times\{r\})$, and conclude \eqref{eq:distance} for all $r_1,r_2$.
		
		To prove points \ref{item3} and \ref{item4}, observe that, with our convention on the mean curvature (see Section \ref{subsec:mean}), the principal curvatures $\lambda_1(r),\lambda_2(r)$ of the embedding $\iota_r:=\zeta(\cdot,r):S\to M$ at the point $p$ satisfy the formula:
		\begin{equation}\label{eq:formula mean r}
			\lambda_i(p,r)=\tanh(\mu_i(p)+r)~,
		\end{equation}
		which is monotone increasing in $r$, where $\lambda_i(p,0)=\tanh\mu_i(p)\in (-1,1)$. Since the mean curvature of $\iota_r$ at $p$ equals $(\lambda_1(p,r)+\lambda_2(p,r))/2$, it follows that it is larger than $H=(\lambda_1(p,0)+\lambda_2(p,0))/2$ if $r>0$ and smaller than $H$ if $r<0$, as claimed in point \ref{item3}.
		
		More precisely, by a direct computation from Equation \eqref{eq:formula mean r} one checks that the derivative of the mean curvature function 
		$$r\mapsto H_p(r)=\frac{1}{2}\left( \lambda_1(p,r)+ \lambda_2(p,r)\right)$$ takes value in $(0,1)$ for all $r$. If we fix $r$, using compactness of $S$ we can define the  functions
		$$g_-(r_0)=\min_{p\in S}\left.\frac{d}{dr}\right|_{r=r_0}H_p(r)\qquad g_+(r_0)=\max_{p\in S}\left.\frac{d}{dr}\right|_{r=r_0}H_p(r)~.$$
		Integrating $g_-$ and $g_+$, which are both positive everywhere, from $0$ to $r$, one obtains the functions $f_-$ and $f_+$ as in point \ref{item4}. We remark that $g_\pm$ are continuous, hence integrable: indeed, using continuity in $p$ and $r$ of the $r$-derivative of $H_{p}(r)$, we see that if $r_n\to r_\infty$, then a sequence $p_n\in S$ of minimum points of $(d/dr)H_\bullet(r_n)$ converges up to a subsequence to $p_\infty$, which is necessarily a minimum point of $(d/dr)H_\bullet(r_\infty)$. Hence $g_-(r_\infty)=\lim_n g_-(r_n)$, and analogously for $g_+$ by replacing minimum by maximum.
	\end{proof}

	\subsection{Maximum principle for CMC surfaces}
	
	In this section we apply Proposition \ref{propsmall} and the geometric maximum principle for mean curvature to achieve two properties which will play a fundamental role in the proof of the foliation result, Theorem \ref{thm:foliation}.
	
	{	\proposition{\label{prop:unique}}
		Let $M\cong S\times\R$ be a quasi-Fuchsian manifold and let $S_H$ and $S_H'$ be closed embedded CMC surfaces in $M$ homotopic to $S\times\{*\}$ with the same mean curvature $H\in (-1,1)$. If $S_H$ has small principal curvatures, then $S_H=S_H'$.}
	
	\begin{proof}
		Let $r$ be the signed distance function from $S_H$, given by the diffeomorphism $\zeta$ as in Proposition \ref{propsmall}, applied to the inclusion $\iota$ of $S$ with image $S_H$. Since $S_H'$ is compact, the restriction of $r$ to $S_H'$ has a maximum $r_{\max}=r(p_{\max})$ and a minimum $r_{\min}=r(p_{\min})$. By Remark \ref{signconv}, the normal vector to $S_H'$ coincides with minus the gradient of the function $r$ at the points $p_{\min}$ and $p_{\max}$.
		
		This implies that $S_H'$ is tangent to the equidistant surface $\zeta(S\times\{r_{\max}\})$, and entirely contained in the side $\{r\leq r_{\max}\}$, towards which the normal vector is pointing by our convention. By the geometric maximum principle, the mean curvature of $S_H'$, which equals $H$, is larger than the mean curvature of  $\zeta(S\times\{r_{\max}\})$ at $p_{\max}$. By item \ref{item3} of Proposition \ref{propsmall}, $r_{\max}\leq 0$. Repeating the argument for the minimum point, one obtains $r_{\min}\geq 0$. Hence $r\equiv 0$ on $S_H'$. Since both $S_H$ and $S_H'$ are closed embedded surfaces, they must coincide. 
	\end{proof}
	
	Let us now consider the case of two CMC surfaces with different values of the mean curvature.
	
	\begin{lemma}\label{lemma:disjoint}
		Let $M\cong S\times\R$ be a quasi-Fuchsian manifold and let $S_H$ and $S_{H'}$ be closed embedded CMC surfaces in $M$ homotopic to $S\times\{*\}$, with mean curvature $H$ and $H'$ respectively, for $H\neq H'$. If $S_H$ has small principal curvatures, then $S_H$ and $S_{H'}$ are disjoint, and moreover the signed distance of every point of $S_{H'}$ from $S_H$ is between $f_+^{-1}(H')$ and $f_-^{-1}(H')$, where $f_\pm$ are the increasing functions introduced in Proposition \ref{propsmall}.
	\end{lemma}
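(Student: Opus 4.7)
The plan is to mimic the proof of Proposition \ref{prop:unique}, but with two refinements: apply the geometric maximum principle at both the maximum and the minimum of the signed distance function, and use item \ref{item4} of Proposition \ref{propsmall} to quantitatively bound the mean curvature of the intermediate equidistant surfaces.

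Concretely, first I would let $r:M\to\R$ denote the signed distance from $S_H$, defined via the diffeomorphism $\zeta$ of Proposition \ref{propsmall}(\ref{item2}) applied with $\iota$ the inclusion $S_H\hookrightarrow M$. Since $S_{H'}$ is compact and embedded, the restriction $r|_{S_{H'}}$ attains a maximum at some $p_{\max}$ and a minimum at some $p_{\min}$, with values $r_{\max}$ and $r_{\min}$. At $p_{\max}$, exactly as in the proof of Proposition \ref{prop:unique}, $S_{H'}$ is tangent to $\zeta(S\times\{r_{\max}\})$ and lies in the half-space $\{r\leq r_{\max}\}$, toward which the common unit normal of both surfaces points (by the convention of Remark \ref{signconv}, combined with the fact that $\zeta(\cdot,r)\to \partial_\infty^- M$ as $r\to+\infty$). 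The geometric maximum principle then gives
\[
H' \;=\; \mathcal{H}(S_{H'})(p_{\max}) \;\geq\; \mathcal{H}\bigl(\zeta(S\times\{r_{\max}\})\bigr)(p_{\max}).
\]
By item \ref{item4} of Proposition \ref{propsmall} the right-hand side is at least $f_-(r_{\max})$, so strict monotonicity of $f_-$ yields $r_{\max}\leq f_-^{-1}(H')$.

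The same procedure at $p_{\min}$ is essentially symmetric, but the side of $\zeta(S\times\{r_{\min}\})$ containing $S_{H'}$ is now $\{r\geq r_{\min}\}$, opposite to the direction of the common normal. Consequently the maximum principle reverses, giving
\[
H' \;\leq\; \mathcal{H}\bigl(\zeta(S\times\{r_{\min}\})\bigr)(p_{\min}) \;\leq\; f_+(r_{\min}),
\]
and therefore $r_{\min}\geq f_+^{-1}(H')$ by the strict monotonicity of $f_+$. Combining the two estimates, every point $p\in S_{H'}$ satisfies $f_+^{-1}(H')\leq r(p)\leq f_-^{-1}(H')$, which is precisely the announced bound on the signed distance.

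Disjointness is then immediate: since $f_\pm(0)=H$ and $f_\pm$ are strictly increasing, the hypothesis $H'\neq H$ forces $f_+^{-1}(H')$ and $f_-^{-1}(H')$ to have the same sign (positive if $H'>H$, negative if $H'<H$), so the closed interval $[f_+^{-1}(H'),f_-^{-1}(H')]$ does not contain $0$; hence $r$ does not vanish on $S_{H'}$ and $S_{H'}\cap S_H=\emptyset$. I do not foresee any substantive obstacle here, since the argument is a routine two-sided application of the maximum principle; the only care required is in tracking the normal orientation (Remark \ref{signconv}) to ensure that the inequality at $p_{\min}$ is correctly reversed from the one at $p_{\max}$, and in invoking item \ref{item4} rather than only item \ref{item3} of Proposition \ref{propsmall} to obtain the pointwise quantitative bound on the equidistant mean curvature.
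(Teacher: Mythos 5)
Your proof is correct and follows essentially the same route as the paper: restrict the signed distance from $S_H$ to $S_{H'}$, apply the geometric maximum principle at both the maximum and minimum points together with item \ref{item4} of Proposition \ref{propsmall} to get $f_-(r_{\max})\leq H'\leq f_+(r_{\min})$, and invert $f_\pm$. Your explicit derivation of disjointness from $f_\pm(0)=H$ and monotonicity is a small but welcome elaboration of a step the paper leaves implicit.
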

	\begin{proof}
		The proof is very similar to Proposition \ref{prop:unique}. Suppose $H'>H$, the other case being analogous. Consider the restriction to $S_{H'}$ of the signed distance function $r$ with respect to $S_H$. This functions admits a minimum $r_{\min}=r(p_{\min})$ and a maximum $r_{\max}=r(p_{\max})$. Hence $S_{H'}$ is tangent to $\zeta(S\times\{r_{\min}\})$ at $p_{\min}$ and to $\zeta(S\times\{r_{\max}\})$ at $p_{\max}$, and contained in the region $\{r_{\min}\leq r\leq r_{\max}\}$ between the two. The geometric maximum principle together with item \ref{item4} of Proposition \ref{propsmall} then implies that 
		$$f_-(r_{\max})\leq H'\leq f_+(r_{\min})~.$$
		This implies that the restriction of $r$ to $S_{H'}$ is at least $r_{\min}\geq f_+^{-1}(H')$, and at most $r_{\max}\leq f_-^{-1}(H')$, as claimed.
	\end{proof}
	
	\subsection{Proof of Theorem \ref{thm:foliation}}
	Let us now conclude the proof of the smooth monotone foliation result, by putting together all the ingredients. The aim is showing that, for $M$ a quasi-Fuchsian manifold in a suitable neighbourhood $U$ of the Fuchsian locus $\F(S)$, there exists a diffeomorphism between $S\times (-1,1)$ and $M$ such that, restricted to each slice $S\times \{H\}$, is an embedding of constant mean curvature $H$. The existence of such CMC surfaces has been proved in Theorem \ref{thm:existence}, so now the goal (up to choosing a smaller neighbourhood $U$ of $\F(S)$) is achieving the diffeomorphism, thus proving the smooth foliation part.
	
	\begin{proof}[Proof of Theorem \ref{thm:foliation}]
		Recall that the proof of Theorem \ref{thm:existence} produces, for every $m$ in $\F(S)$, a neighbourhood $U$ in $\QF$ as the intersection 
		\begin{equation}\label{eq:finite subcover}
			U:=\widehat U\cap U_{c_1}\cap\ldots\cap U_{c_N}~,
		\end{equation}
		where $\widehat U$ is a neighbourhood of $m$ in which the ends are monotonically foliated by CMC surfaces with mean curvature ranging in $(-1,-1+\widehat\epsilon)\cup (1-\widehat\epsilon,1)$, and the $U_{c_i}$ are neighbourhoods of $m$ obtained from the family $U_{H_0}$ (by extracting a finite cover of the interval $[-1,1]$). Hence for every $i$, in every quasi-Fuchsian manifold inside $U_{c_i}$ we have existence of CMC surfaces of mean curvature ranging in $(c_i-\epsilon_{c_i},c_i+\epsilon_{c_i})$.
		
		Now, let us provide a couple of preliminary observations. First, from Theorem \ref{thm:existence_compact}, we can assume that the $U_{H_0}$ have the property that the CMC surfaces of mean curvature $(H_0-\epsilon_{H_0},H_0+\epsilon_{H_0})$ have small principal curvatures. (In particular, they are embedded by \ref{item1} of Proposition \ref{propsmall}.) Hence in \eqref{eq:finite subcover}, we can assume that all the $U_{c_i}$ have this property. Second, it is harmless to assume that $c_1<\ldots<c_N$ and that the corresponding intervals, namely $(-1,-1+\widehat\epsilon),(c_1-\epsilon_{c_1},c_1+\epsilon_{c_1}), \ldots, (c_N-\epsilon_{c_N},c_N+\epsilon_{c_N}), (1-\widehat\epsilon,1)$ only intersect in pairs (that is, each interval intersects the previous and the next one, and no other), up to choosing smaller $\epsilon$'s.  
		
		Having made these assumptions, using Theorem \ref{thm:existence_compact} we can construct, for any quasi-Fuchsian manifold $M$ in $U$, smooth maps 
		$$\xi_{i}:S\times (c_i-\epsilon_{c_i},c_i+\epsilon_{c_i})\to M$$ 
		having the property that $\xi_{c_i}(S\times\{H\})$ is an embedded CMC surface of mean curvature $H$. Similarly in the ends, from Theorem  \ref{thm:foliation_ends} we get smooth maps 
		$$\xi_0:S\times (-1,-1+\widehat\epsilon)\to M\qquad\text{and}\qquad \xi_{N+1}:S\times (1-\widehat\epsilon,1)\to M$$
		satisfying the analogous property.
		
		By our previous assumption, all the $\xi_{i}(S\times\{H\})$ have small principal curvatures, if $i\in\{1,\ldots,N\}$. Hence by Proposition \ref{prop:unique}, we have $\xi_{i}(S\times\{H\})=\xi_{i'}(S\times\{H\})$ for every $i,i'\in\{0,\ldots,N+1\}$. Using our other assumption, namely that only consecutive intervals overlap, we can iteratively precompose each $\xi_i$, starting from $\xi_1$, with smooth diffeomorphisms of the source that preserve each slice $S\times\{H\}$, so that $\xi_i(\cdot,H)=\xi_{i+1}(\cdot,H)$ as long as $H$ is in the intersection of the corresponding intervals. Hence we can glue together the $\xi_i$'s to obtain a smooth map 
		$$\xi
		:S\times(-1,1)\to M$$
		such that $\xi(S\times\{H\})$ is an embedding of a CMC surface with mean curvature $H$, which we denote by $S_H$. 
		
		We claim that $\xi$ is injective. Indeed it is injective on every slice $S\times\{H\}$, hence it suffices to show that the images of different slices are disjoint. We distinguish three cases. If $H$ is in one of the intervals $(c_i-\epsilon_{c_i},c_i+\epsilon_{c_i})$, then $S_H$ is disjoint by any $S_{H'}$ for $H'\neq H$ by  Lemma \ref{lemma:disjoint}. If $H\in (-1,-1+\widehat\epsilon)$ and $H'\in (1-\widehat\epsilon,1)$, then $S_H$ and $S_{H'}$ are disjoint because the two neighbourhoods of the ends are disjoint. Finally, if both $H$ and $H'$ are in $(-1,-1+\widehat\epsilon)$ or in $(1-\widehat\epsilon,1)$, then $S_H$ and $S_{H'}$ are disjoint by Theorem  \ref{thm:foliation_ends}.
		
		Moreover $\xi$ is surjective by the intermediate value theorem, because it is a diffeomorphism onto a neighbourhood of the ends when restricted to $S\times (-1,-1+\widehat\epsilon)$ and $S\times (1-\widehat\epsilon,1)$ by Theorem \ref{thm:foliation_ends}. Hence $\xi$ is a homeomorphism. By the inverse function theorem, to prove that it is a diffeomorphism, and thus conclude  Theorem  \ref{thm:foliation}, it suffices to show that its differential is injective at every $(p,H)$ with $H$ in one of the intervals $(c_i-\epsilon_{c_i},c_i+\epsilon_{c_i})$. 
		
		For this purpose, we know already that the differential of $\xi$ is injective when restricted to $T_p S \subset T_{(p,H)}(S\times(-1,1))$, and $d\xi(T_p S)$ is the tangent space to the CMC surface which we will call $S_H$. Hence it suffices to show that $d\xi(\partial/\partial H)$ is a nonzero vector transverse to $d\xi_{(p,H)}(T_p S)=T_{\xi(p,H)}S_H$. Here we use that $S_H$ has small principal curvatures, and the equidistant foliation from Proposition \ref{propsmall}. Indeed, it is sufficient to show that $d(r\circ\xi)(\partial/\partial H)$ does not vanish, where $r$ is the signed distance from $S_H$ provided by Proposition \ref{propsmall}. But the last part of Lemma \ref{lemma:disjoint} tells us that  $r\circ\xi$ (which is a differentiable function) is larger than the function $f_+^{-1}$, whose derivative is positive. Hence 
		$$\left.\frac{d}{dt}\right|_{t=H}(r\circ\xi)(p,t)>0~.$$
		This concludes the proof.
	\end{proof}

\chapter{ Future research}
A continuation of the project in \cite{cmcdifian} is to consider the flow in $T^{*}\T$ that we obtain by considering the path $(-1,1)\rightarrow T^{*}\T$ given by $t\mapsto ([c_{t}],q_{t})$ where the first fundamental form of the CMC surface with mean curvature $t$ is in the conformal class $[c_{t}]$  and the traceless part of the second fundamental form is given by the real part $\mathfrak{R}(q_{t})$ (see\cite{Tromba1992-oa} for example). The statement we then prove is that:
{\theorem The flow $(-1,1): t\mapsto ([c_{t}],q_{t})$ are orbits of a Hamiltonian flow on $T^{*}\T$ with respect to its cotangent symplectic structure and the Hamiltonian function is given by $-\frac{1}{2}$ times area of the CMC surfaces.}\\\\
Note that similar results for CMC foliations in Lorentzian setting and their relations with Teichmüller theory have already been studied in \cite{moncrief}. Also, as an extension of my Ph.D project, there is a related question that I will like to consider regarding {prescribing Schwarzians at infinity} of quasi-Fuchsian manifolds. For a given measured foliation $\f$ we can consider the set $O(\f)\in \MF$ which consists of all measured foliations $\g'$ such that $(\f,\g')$ fill for any $\g'\in O(\f)$. We can then ask:
{\question Given a measured foliations $\f$, does there exist a unique quasi-Fuchsian manifold with Schwarzians at infinity $([c_{+}],tq^{\f}_{[c_{+}]})$ and $([c_{-}],tq^{\g'}_{[c_{-}]})$ for some $t>0$ where $[c_{+}],[c_{-}]\in \T$ are the equivalence classes of complex structures appearing at the boundary at infinity and for any $g'\in O(\f)$ ?}. \\\\
For $t$ small enough and restricted to arational case, Theorem \ref{thm1.1} provides an answer for quasi-Fuchsian manifolds near the Fuchsian locus although. If one consider the case of almost-Fuchsian manifolds, then there are again some well established results regarding the description of the complex structures at the ends of an almost-Fuchsian manifold in terms of the immersion data of the minimal surface which maybe useful for this purpose, see for instance\cite{Trautwein2019} (Proposition $5.6$) and also\cite{Krasnov2007}. In fact, we have shown in\cite{dip} that for a quasi-Fuchsian manifold near the Fuchsian locus, the Schwarzians at infinity for the paths $\beta_{([c],q)}(t)$ are in fact determined at first order by the holomorphic quadratic differential $q\in T^{*}\T$ such that the real part $\mathfrak{R}(q)$ is equal to the second fundamental form $\II$ of the unique immersed minimal surface it contains.\\\\  
Another aspect can be to consider a well known result (see\cite{sullivan},\cite{Epstein2005},\cite{yarmola}) that the hyperbolic metric on the positive boundary component of $\mathcal{CC}(M)$ and the conformal class at boundary at positive infinity of a quasi-Fuchsian manifold are uniformly close in $\T$, in fact quasi-conformal to each other by a factor $\leq 2.1$ and ask a similar comparative question for measured foliations at infinity and measured bending lamination at infinity. We will denote the space of equivalence classes of measured laminations on $S$ as $\ML$  where we also have that $\MF\cong \ML$ (see\cite{mosher}). So we can ask:

{\question\label{qw} Let $\lambda_{+}$ and $\fp$ be the measured bending lamination and the measured foliation at infinity at the positive ends of the boundary of $\mathcal{CC}(M)$ and the boundary at infinity respectively. Then are they uniformly close in some sense in the space of equivalence class of measured geodesic lamination $\ML$?}\\\\
Moreover investigating the existence of measured foliations at infinity or prescribing Schwarzians at infinity in convex-cocompact hyperbolic $3$-manifolds or higher dimensional hyperbolic manifolds is something I aspire to do in the future.\\\\
One more question we can ask is about the limiting behaviour of a sequence of quasi-Fuchsian manifolds $g_{n}$ based on their measured foliations at infinity $(\f_{n},\g_{n})$. Recall here that there is a well-defined notion of convergence in the space of measured foliation where the limit of a sequence of measured foliations in $\MF$ converge to a point in the space of projective measured foliations $P\MF$. In particular we want to ask the following question:
{\question Let $(\f_{n},\g_{n})$ converge to the pair of filling projective measured foliations $([\f],[\g])$. Then does $g_{n}$ have a converging subsequence? If so, is the limit the ending lamination for the sequence $g_{n}$?  }
Here by ending lamination we mean the limit of the measured bending lamination on the boundary of the convex core of $g_{n}$ and has been a very well-studied entity in recent times. \\ \\
To finish, there is one more related problem I am interested in at the moment which concerns the intersection of the sections $q^{\f},q^{-\g}:\T \rightarrow T^{*}\T$ for a filling pair $(\f,\g)$ and asking if the intersection of the sections is transverse in $T^{*}\T$? It is clear that they intersect uniquely at the point $([7]c],q)\in T^{*}\T$ which realise $\f$ and $\g$ as its horizontal and vertical measured foliations as par Gardiner-Masur theorem. The transversality of their intersection on the other hand can be reformulated into the following question:

{\question Let $(\f,\g)$ be a filling pair and $([c],q)\in T^{*}\T$ be the unique holomorphic quadratic differential realising them. Consider a first-order deformation given by $t\mapsto ([c_{t}],q_{t}), t\geq 0$ such that $[c_{0}]=[c]$ and $q_{0}=q$ and which maintains $(\f,\g)$ as its measured foliation at first-order at $t=0$. Then, is this deformation necessarily trivial?  } \\

Here, we say $t\mapsto \f_{t}\in \MF$ with $\f_{0}=\f$ is said to be equal to $\f$ at first order at $t=0$ if $\ddt i(\gamma,\f_{t})=0$ for any simple closed curve $\gamma$ on $S$ and where $i(\gamma,.):\MF\rightarrow \mathbb{R}_{\geq 0}$ is the intersection number of $\gamma$ with a given measured foliation. The answer is positive when we restrict to holomorphic quadratic differentials in the dense generic stratum, i.e, when all the zeroes of $q_{t}$ are simple. Moreover, for genus $2$ surface it can be shown that the question above has a positive answer. The main difficulty lies in analysing deformations that collapse or join the zeroes of $q$ for arbitrary genus.\\\\

		\bibliographystyle{abbrv}
	\bibliography{versionfinal2,main,cmcbiblio}

\end{document}